\newcounter{enumi_memory}  
\theoremstyle{definition}
\newtheorem{step}{Step}    
\theoremstyle{plain}
\newtheorem{theorem}{Theorem}[section]
\newtheorem{lemma}[theorem]{Lemma}
\newtheorem{corollary}[theorem]{Corollary}
\theoremstyle{remark}
\newtheorem{remark}[theorem]{Remark}
\newtheorem{example}[theorem]{Example}
\theoremstyle{definition}
\newtheorem{definition}[theorem]{Definition}
\newtheorem{miniremark}[theorem]{}
\newcommand{\restrict}{\mathop{\llcorner}}
\newcommand{\noopsort}[1]{} 
\newcommand{\with}{\!:\!}
\newcommand{\vect}[1]{\accentset{\rightharpoonup}{#1}}
\DeclareMathOperator{\ap}{ap}               
\DeclareMathOperator*{\aplim}{ap\,lim}      
\DeclareMathOperator{\boundary}{\partial}   
\DeclareMathOperator{\card}{card}           
\DeclareMathOperator{\diam}{diam}           
\DeclareMathOperator{\dist}{dist}           
\DeclareMathOperator{\dmn}{dmn}             
\DeclareMathOperator{\im}{im}               
\DeclareMathOperator{\spt}{spt}             
\DeclareMathOperator{\without}{\sim}        
\DeclareMathOperator{\Bdry}{Bdry}           
\DeclareMathOperator{\Clos}{Clos}           
\DeclareMathOperator{\Der}{D}               
\DeclareMathOperator{\Hom}{Hom}             
\DeclareMathOperator{\Int}{Int}             
\DeclareMathOperator{\Jac}{J}               
\DeclareMathOperator{\Lip}{Lip}             
\DeclareMathOperator{\Tan}{Tan}             
\newcommand{\ud}{\,\mathrm{d}}
\title{A priori bounds for geodesic diameter. Part I. Integral chains with
coefficients in a complete normed commutative group}
\author{Ulrich Menne \and Christian Scharrer}
\begin{document}

\maketitle 

\begin{abstract}
	As service to the community, we provide---for Euclidean space---a
	basic treatment of locally rectifiable chains and of the complex of
	locally integral chains.  In this setting, we may beneficially develop
	the idea of a complete normed commutative group bundle over the
	Grassmann manifold whose fibre is the coefficient group of the chains.
	Our exposition also sheds new light on some algebraic aspects of the
	theory.  Finally, we indicate an extension to a geometric approach to
	locally flat chains centring on locally rectifiable chains rather than
	completion procedures.
\end{abstract}

\paragraph{MSC-classes 2020}
	49Q15 (Primary); 49Q20 (Secondary).

\paragraph{Keywords}
	Locally integral $G$ chains $\cdot$ locally rectifiable $G$ chains
	$\cdot$ locally flat $G$ chains $\cdot$ complete normed commutative
	group bundle $\cdot$ constancy theorem.

\tableofcontents

\section{Introduction}

Throughout the introduction, \emph{$m$ is a nonnegative integer, $n$ and $d$
are positive integers, $U$ is an open subset of $\mathbf R^n$, and $G$ is a
complete normed commutative group.}  Our notation is based on H.\ Federer's
treatise \cite{MR41:1976}, see Section \ref{section:notation}; in particular,
$\mathbf I_m^{\textup{loc}} (U)$, $\mathscr R_m^{\textup{loc}} ( U)$, and
$\mathscr F_m^{\textup{loc}} (U)$ denote the commutative groups of those $m$
dimensional currents in $U$ which locally are integral, rectifiable, and flat,
respectively.

\subsection{Overview}

The primary goal of this first paper of our series is to provide a
self-contained exposition with complete proofs of all basic facts for locally
rectifiable chains and locally integral chains in $U$ with coefficients in $G$
to be employed in the third and final paper (see \cite{MenneScharrer2-3}).
The Euclidean setting allows us to beneficially employ the concept of $m$
dimensional approximate tangent planes in $\mathbf R^n$ through the usage of
rectifiable varifolds and the study of the \emph{complete normed commutative
group bundle} $\mathbf G (n,m,G)$ over the Grassmann manifold $\mathbf G(n,m)$
with fibre $G$; the latter is an idea originating from F.\ Almgren \cite[2.4,
2.6\,(d)]{MR0225243} and T.\ De Pauw and R.\ Hardt \cite[3.6]{MR2876138} which
is not developed in those works.

In comparison to W.\ Fleming \cite{MR185084} and T.\ De Pauw and R.\ Hardt
\cite{MR2876138,MR3206697}, five distinctive features of our approach may be
summarised as follows: Firstly, all our classes of chains are based on
\emph{local chains}.  Secondly, we freely use algebraic properties of
commutative groups and topological properties of normed commutative groups
following N.\ Bourbaki.  Thirdly, the \emph{closure theorem}---or, the
boundary rectifiability theorem in the terminology of L.\ Simon---in the
context of integer coefficients (i.e., for rectifiable currents) plays a
central role in our construction of integral chains with coefficients in a
general complete normed commutative group.  Fourthly, we construct the chain
complex of \emph{simple locally integral $G$ chains} as starting point for a
closure procedure---based on pairs of locally rectifiable $G$ chains---leading
to locally integral $G$ chains.  By a simple locally integral $G$ chain, we
mean a locally rectifiable $G$ chain which is expressible as finite sum of
products of locally integral $\mathbf Z$ chains (isomorphically, locally
integral currents) with elements of $G$.  Our choice is motivated by the
favourable closedness properties of this chain complex under restriction, push
forward, and slicing; traditionally, polyhedral chains or Lipschitz chains
serve as starting point to construct flat $G$ chains by means of completion.
Fifthly, we indicate how the concepts of locally integral $G$ chain and
locally rectifiable $G$ chain can be used to construct the chain complex of
locally flat $G$ chains by taking a suitable quotient---again based on pairs
of locally rectifiable $G$ chains; previous approaches firstly define the
chain complex of flat $G$ chains and obtain integral $G$ chains as a
subcomplex.  Thus, in our proposed treatment, locally rectifiable $G$ chains
are central for both constructions: that of locally integral $G$ chains and
that of locally flat $G$ chains.

The group of rectifiable $G$ chains as defined by T.\ De Pauw and R.\ Hardt in
\cite[3.6]{MR2876138} is isomorphic to ours of locally rectifiable $G$ chain
with finite mass, see \ref{miniremark:def-rectifiable-G-chains}.  For $G =
\mathbf R$, our concepts are isomorphic to those of H.\ Federer
\cite{MR833403}, see \ref{example:real-coefficients} and
\ref{example:locally-flat-real-chains}.  If $G = \mathbf Z / d \mathbf Z$,
then our integral $G$ chains are isomorphic to integral chains modulo $d$ as
defined by H.\ Federer in \cite[4.2.26]{MR41:1976}, see
\ref{example:integers-mod-lambda}.

\subsection{Outline by section}

\subsubsection*{Preliminaries}

In this section, we gather six strings of preparations: Firstly, we summarise
basic properties of normed commutative groups in
\ref{def:normed-commutative-group}--\ref{remark:summation}; in particular, we
recall that it may happen that $G$ is isomorphic to $\mathbf Z$ as commutative
group but not so as normed commutative group, see \ref{example:torus-subgroup}.
Secondly, we make some measure-theoretic preparations in
\ref{def:restriction}--\ref{remark:push_forward_borel_regular}.  Thirdly, we
construct two auxiliary functions of class $\infty$ in
\ref{thm:prescribed-zero-set}--\ref{corollary:separation-smooth-Tietze}.  The
second string and the third string will be employed throughout this series of
papers, see \ref{remark:measure-preparations} and
\ref{remark:auxiliary-functions}.  Fourthly, in analogy to
\begin{equation*}
	\mathscr F_m (U) = \{ Q + \boundary R \with Q \in \mathscr R_m (U), R
	\in \mathscr R_{m+1} (U) \},
\end{equation*}
we represent $\mathbf F_m^{\textup{loc}} ( \mathbf R^n )$ as quotient vector
space in
\ref{thm:structure-flat-chains}--\ref{remark:classical-real-flat-chains}.
Fifthly, in \ref{lemma:cutting-locally-integral-flat-chains}%
--\ref{remark:Alberti-Marchese-flat}, we similarly exhibit $\mathscr
F_m^{\textup{loc}} ( U )$ as quotient commutative group.  Sixthly, we study
weights of rectifiable $m$ varifolds in
\ref{thm:rectifiability}--\ref{remark:product-rect-var}; this includes the
area formula---including a version for $G$ valued functions---in
\ref{thm:push-forward-rectifiable-varifolds} and
\ref{corollary:push-forward-rectifiable-varifolds}, the coarea formula in
\ref{thm:coarea-rectifiable-varifolds}, and the Cartesian product in
\ref{thm:product-rect-var}.

\subsubsection*{Rectifiable chains}

We begin by defining and studying the complete normed commutative group bundle
$\mathbf G (n,m,G)$ over $\mathbf G(n,m)$ with fibre $G$ in
\ref{miniremark:normed-group-bundle}%
--\ref{miniremark:product-grassmann-bundle}.  This allows to introduce the
necessary operations for locally rectifiable $G$ chains---addition,
right-multiplication on $\mathbf G(n,m,\mathbf Z)$ with members of $G$, push
forward, slicing, and Cartesian product---firstly on the level of the bundle.
Then, the complete normed commutative group
\begin{equation*}
	\mathscr R_m^{\textup{loc}} ( U, G)
\end{equation*}
of $m$ dimensional \emph{locally rectifiable $G$ chains} in $U$ is defined
using equivalence classes of certain $\mathscr H^m$ measurable $\mathbf
G(n,m,G)$ valued functions in \ref{miniremark:def-rectifiable-G-chains}.  To
each $S$ in $\mathscr R_m^{\textup{loc}} (U,G)$ correspond the weight $\| S
\|$ of an $m$ dimensional rectifiable varifold in $U$ and a representing
function (i.e., a member of the equivalence class $S$)
\begin{equation*}
	\vect S;
\end{equation*}
the role of the latter is analogous to the product $\boldsymbol \Uptheta^m (
\| Q \|, \cdot ) \vec Q$ for $Q \in \mathscr R_m^{\textup{loc}} ( U )$.
Whenever $\| S \|$ is absolutely continuous with respect to the weight $\phi$
of some $m$ dimensional rectifiable varifold in $U$, there exists a
representing function of $S$ which is \emph{adapted} to $\phi$, see
\ref{miniremark:def-rectifiable-G-chains}; for instance, $\vect S$ is adapted
to $\| S \|$.  This concept allows to combine the results on the bundle
$\mathbf G(n,m,G)$ with those on rectifiable varifolds to study the
afore-mentioned operations on $\mathscr R_m^{\textup{loc}} (U,G)$ in
\ref{miniremark:push-forward-G-chain}--\ref{miniremark:slicing-G-chains}.

\subsubsection*{Integral chains}

We construct the complete normed commutative group $\mathbf I_m^{\textup{loc}}
( U, G )$ of $m$ dimensional locally integral $G$ chains in $U$ and the
corresponding boundary operator $\boundary_G$ in six steps.

\begin{step} [integer coefficients] \label{step:integer-coefficients}
	To define the subgroup $\mathbf I_m^{\textup{loc}} ( U, \mathbf Z )$
	of $\mathscr R_m^{\textup{loc}} ( U,\mathbf Z )$ and the boundary
	operator $\boundary_{\mathbf Z}$ corresponding to $\mathbf
	I_m^{\textup{loc}} ( U, \mathbf Z )$, we employ the canonical
	isomorphism of commutative groups $\mathscr R_m^{\textup{loc}} (U)
	\simeq \mathscr R_m^{\textup{loc}} ( U, \mathbf Z )$ in
	\ref{miniremark:isomorphism-for-integers}.
\end{step}

\begin{step} [algebra lemma] \label{step:algebra-lemma}
	For homomorphisms $i : A \to B$ of commutative groups, we establish an
	equivalent condition to $i \otimes \mathbf 1_H$ being univalent (i.e.,
	injective) for every commutative group $H$ in \ref{lemma:tensor-G};
	this is accomplished by expressing $H$ as inductive limit of its
	finitely generated subgroups and employing the structure theorem for
	finitely generated commutative groups.  We also recall that---contrary
	to the category of vector spaces---univalentness of $i$ does not imply
	the same for the homomorphism $i \otimes \mathbf 1_H$, see
	\ref{remark:tensor-G}.
\end{step}

\begin{step} [application of the closure theorem for rectifiable currents]
	\label{step:closure-theorem}
	We verify the condition obtained in Step \ref{step:algebra-lemma} for
	the inclusion map $i$ of $\mathbf I_m^{\textup{loc}} ( U, \mathbf Z )$
	into $\mathscr R_m^{\textup{loc}} ( U, \mathbf Z )$ by means of the
	\emph{closure theorem} for rectifiable currents (see
	\cite[4.2.16\,(2)]{MR41:1976} or \cite[30.3]{MR756417}) in
	\ref{example:monomorphisms}.  This amounts to verifying, for every
	positive integer $d$, we have $Q \in \mathbf I_m^{\textup{loc}} ( U)$
	whenever $Q \in \mathscr R_m^{\textup{loc}} ( U )$ and $dQ \in \mathbf
	I_m^{\textup{loc}} (U)$.
\end{step}

\begin{step} [simple locally rectifiable $G$ chains]
	\label{step:simple-rectifiable-chains}
	We use the canonical multiplication $\cdot : \mathscr
	R_m^{\textup{loc}} (U,\mathbf Z ) \times G \to \mathscr
	R_m^{\textup{loc}} (U,G)$, to obtain the induced homomorphism
	\begin{equation*}
		\rho_{U,m,G} : \mathscr R_m^{\textup{loc}} ( U, \mathbf Z )
		\otimes G \to \mathscr R_m^{\textup{loc}} ( U, G ),
	\end{equation*}
	whose image is dense in $\mathscr R_m^{\textup{loc}} (U,G)$, in
	\ref{miniremark:bilinear}.  Based on the structure theorem for
	finitely generated commutative groups, we next prove that
	\begin{equation*}
		\text{$\rho_{U,m,G}$ is univalent}
	\end{equation*}
	in \ref{thm:rho-mono}.  In case $G$ is finite, we deduce $\mathscr
	R_m^{\textup{loc}} (U, \mathbf Z ) \otimes G \simeq \mathscr
	R_m^{\textup{loc}} ( U, G )$, see
	\ref{remark:isomorphisms-finite-group}.
\end{step}

\begin{step} [simple locally integral $G$ chains]
	\label{step:simple-integral-chains}
	By Steps \ref{step:closure-theorem} and
	\ref{step:simple-rectifiable-chains}, the composition of homomorphisms
	\begin{equation*}
		\begin{xy}
			\xymatrix{
				\mathbf I_m^{\textup{loc}} (U,\mathbf Z)
				\otimes G \ar[rr]^{\text{$i \otimes \mathbf
				1_G$}} && \mathscr R_m^{\textup{loc}}
				(U,\mathbf Z) \otimes G \ar[rr]^{\rho_{U,m,G}}
				&& \mathscr R_m^{\textup{loc}} (U,G), }
		\end{xy}
	\end{equation*}
	where $i : \mathbf I_m^{\textup{loc}} ( U, \mathbf Z ) \to \mathscr
	R_m^{\textup{loc}} ( U, \mathbf Z )$ is the inclusion, is univalent;
	its image consists, by definition, of all $m$ dimensional \emph{simple
	locally integral $G$ chains} in $U$.  By univalentness, the boundary
	operator $\boundary_G$ of this chain complex may be defined by
	\begin{equation*}
		\boundary_G ( S \cdot g ) = ( \boundary_{\mathbf Z} S ) \cdot
		g \quad \text{for $S \in \mathbf I_m^{\textup{loc}} ( U,
		\mathbf Z )$ and $g \in G$}
	\end{equation*}
	in \ref{corollary:boundary-operator-dense-subset}; this is in
	accordance with the previous definition in case $G = \mathbf Z$.
\end{step}

\begin{step} [closure operation] \label{step:closure-operation}
	We let $\mathbf I_0^{\textup{loc}} (U,G) = \mathscr R_0^{\textup{loc}}
	(U,G)$.  Whenever $m \geq 1$, the complete normed commutative group
	$\mathbf I_m^{\textup{loc}} ( U, G )$ is defined in
	\ref{definition:loc-integral-chain} as closure of the subgroup of
	\begin{equation*}
		\mathscr R_m^{\textup{loc}} (U,G) \times \mathscr
		R_{m-1}^{\textup{loc}} (U,G)
	\end{equation*}
	consisting of all pairs $(S,\boundary_G S)$ corresponding to $m$
	dimensional simple locally integral $G$ chains $S$ in $U$; the
	boundary operator
	\begin{equation*}
		\boundary_G : \mathbf I_m^{\textup{loc}} (U,G) \to \mathbf
		I_{m-1}^{\textup{loc}} (U,G)
	\end{equation*}
	is then induced by the shift operator mapping $(S,T) \in \mathscr
	R_m^{\textup{loc}} (U,G) \times \mathscr R_{m-1}^{\textup{loc}} (U,G)$
	onto $(T,0)$ if $m \geq 2$ and onto $T$ if $m=1$.  Clearly,
	$\boundary_G$ is continuous.  We then show in
	\ref{thm:integral-chains} that the canonical projection of $\mathscr
	R_m^{\textup{loc}} ( U,G ) \times \mathscr R_{m-1}^{\textup{loc}}
	(U,G)$ onto its first factor, restricted to $\mathbf
	I_m^{\textup{loc}} (U,G)$, is univalent; this allows us to
	subsequently
	\begin{quote}
		\emph{identify $\mathbf I_m^{\textup{loc}} (U,G)$ with a dense
		subgroup of $\mathscr R_m^{\textup{loc}} (U,G)$ so that
		$\boundary_G$ extends the boundary operator on simple locally
		integral $G$ chains}.
	\end{quote}
	In this process, establishing that we have $T=0$ whenever $(0,T) \in
	\mathbf I_m^{\textup{loc}} (U,G)$ is ultimately reduced to the case
	that $U = \mathbf R^n$ and that $\spt \| T \|$ is a compact subset of
	an $m-1$ dimensional vector subspace.
\end{step}

During Steps \ref{step:integer-coefficients}--\ref{step:closure-operation}, we
keep track in \ref{miniremark:isomorphism-for-integers},
\ref{miniremark:bilinear}, and \ref{remark:boundary-dense-subset} of how the
operations push forward, Cartesian product, and slicing on rectifiable $G$
chains interact with the intermediately constructed boundary operators.  This
is crucial: firstly, in the identification of $\mathbf I_m^{\textup{loc}} (
U,G )$ with a subgroup of $\mathscr R_m^{\textup{loc}} (U,G)$ carried out in
Step \ref{step:closure-operation}, whence the properties of these operations
for $\mathbf I_m^{\textup{loc}} (U,G)$ in \ref{thm:integral-chains} and the
homotopy formula in \ref{corollary:integral-chains:homotopy-formula} follow,
and secondly in proving in \ref{thm:restriction-homomorphism} that the
restriction operators $r_m : \mathscr R_m^{\textup{loc}} (V,G) \to \mathscr
R_m^{\textup{loc}} (U,G)$ satisfy
\begin{equation*}
	r_m \big [ \mathbf I_m^{\textup{loc}} (V,G) \big ] \subset \mathbf
	I_m^{\textup{loc}} (U,G)
\end{equation*}
and commute with $\boundary_G$, whenever $U \subset V \subset \mathbf R^n$ and
$V$ is open.

To compare with classical examples, we define (see
\ref{miniremark:def-rectifiable-G-chains} and \ref{definition:integral-chain})
the subgroups
\begin{align*}
	\mathscr R_m ( U,G ) & = \mathscr R_m^{\textup{loc}} (U,G) \cap \{ S
	\with \text{$\spt \| S \|$ is compact} \}, \\
	\mathbf I_m ( U,G ) & = \mathbf I_m^{\textup{loc}} (U,G) \cap \{ S
	\with \text{$\spt \| S \|$ is compact} \}.
\end{align*}
Moreover, Steps \ref{step:integer-coefficients}, \ref{step:algebra-lemma}, and
\ref{step:simple-rectifiable-chains} allow to define the subgroup $\mathscr
P_m (U,G)$ of $\mathscr R_m^{\textup{loc}} (U,G)$ consisting of all $m$
dimensional \emph{polyhedral $G$ chains} in $U$, see
\ref{miniremark:bilinear}.

\subsubsection*{Classical coefficient groups}

We compare our treatment of rectifiable and integral $G$ chains with that of
the classical cases $G = \mathbf R$ and $G = \mathbf Z/ d\mathbf Z$ in
\cite{MR833403} and \cite[4.2.26]{MR41:1976}, respectively.  In
\ref{example:real-coefficients}, we provide canonical isomorphisms---commuting
with the boundary operators, restriction, push forward, Cartesian product, and
slicing---showing that
\begin{equation*}
	\mathscr R_m^{\textup{loc}} (\mathbf R^n,\mathbf R) \simeq \mathbf
	F_m^{\textup{loc}} ( \mathbf R^n ) \cap \{ Q \with \text{$Q$ has
	positive densities} \}.
\end{equation*}
This isomorphism maps $\mathbf I_m^{\textup{loc}} ( \mathbf R^n, \mathbf R )
\cap \{ S \with \text{$S$ is simple} \}$ onto the
\begin{equation*}
	\text{real linear span of $\mathbf I_m^{\textup{loc}} ( \mathbf R^n )$
	in $\mathbf F_m^{\textup{loc}} ( \mathbf R^n )$}.
\end{equation*}
For $m \geq 1$, we determine the closure of these groups to obtain
\begin{equation*}
	\mathbf I_m^{\textup{loc}} (\mathbf R^n,\mathbf R) \simeq \mathbf
	F_m^{\textup{loc}} ( \mathbf R^n ) \cap \{ Q \with \text{$Q$ and
	$\boundary Q$ have positive densities} \};
\end{equation*}
this is based on the deformation theorem for members of the group on the right
from \cite{MR833403} and the resulting approximation theorem by push forwards
of $m$ dimensional real polyhedral chains in $\mathbf R^n$ by diffeomorphisms
of class $1$ of $\mathbf R^n$.

Similarly, relying on the approximation theorem for integral chains modulo $d$
from \cite[4.2.26]{MR41:1976}, we construct canonical isomorphisms
\begin{gather*}
	\mathscr R_m ( \mathbf R^n, \mathbf Z/d\mathbf Z ) \simeq \mathscr
	R_m^d ( \mathbf R^n ), \quad \mathbf I_m ( \mathbf R^n, \mathbf
	Z/d\mathbf Z ) \simeq \mathbf I_m^d ( \mathbf R^n ), \\
	\mathbf I_m ( \mathbf R^n, \mathbf Z/d\mathbf Z ) \cap \{ S \with
	\text{$S$ is simple} \} \simeq \big \{ ( Q )^d \with Q \in \mathbf I_m
	( \mathbf R^n ) \big \}
\end{gather*}
in \ref{example:integers-mod-lambda};  in particular, R.\ Young's structural
result
\begin{equation*}
	\mathbf I_m^d (\mathbf R^n) = \{ (Q)^d \with Q \in \mathbf I_m
	(\mathbf R^n) \}
\end{equation*}
in \cite[Corollary 1.5]{MR3743699} may be restated by saying that every $S \in
\mathbf I_m ( \mathbf R^n, \mathbf Z/d\mathbf Z)$ is simple.  To clarify the
literature regarding the impossibility of an analogous structural result for
$\mathbf I_{m,K}^d ( \mathbf R^n )$ for general compact subsets $K$ of
$\mathbf R^n$, we include in \ref{remark:Federer-correction} an unpublished
correction listed by H.\ Federer.

\subsubsection*{Constancy theorem}

To construct an example of a one-dimensional indecomposable integral $G$ chain
whose associated rectifiable varifold is decomposable in the second paper of
our series (see \cite{arXiv:2209.05955v1}), we provide a constancy theorem for
$m$ dimensional locally integral $G$ chains whose boundary lies outside of an
$m$ dimensional connected orientable submanifold $M$ of class $1$ of $U$ in
\ref{thm:constancy-theorem}.  In the model case that $m = n$ and
\begin{equation*}
	M = \mathbf R^m \cap \{ x \with \text{$a_i < x_i < b_i$ for $i = 1,
	\ldots, m$} \},
\end{equation*}
where $- \infty < a_i < b_i < \infty$ for $i = 1, \ldots, m$, the constancy
theorem yields that $T$ in $\mathbf I_m ( \mathbf R^m, G )$, satisfying $\spt
\| \boundary_G T \| \subset \Bdry M$, equals $Q \cdot g$, for some $g \in G$,
where $Q \in \mathbf I_m ( \mathbf R^m, \mathbf Z )$ corresponds to $(
\mathscr L^m \restrict M ) \wedge \mathbf e_1 \wedge \cdots \wedge \mathbf e_m
\in \mathbf I_m ( \mathbf R^m)$.  We prove the general case and the model case
by simultaneous induction on $m$.  This is mostly based on H.\ Federer's
arguments for the classical coefficient groups in \cite[4.1.31\,(2),
4.2.3]{MR41:1976}---see also \cite[4.2.26, p.\ 432]{MR41:1976}---which we
adapt and merge by means of our restriction operators $r_m$, see
\ref{remark:constacy-theorem-model} and
\ref{remark:constancy-theorem-discussion}.

\subsubsection*{Flat chains}

To conclude the development of the present paper, we indicate in
\ref{miniremark:locally-flat-G-chains} how to extend our approach to include a
chain complex of locally flat $G$ chains.  Namely, we construct complete
normed commutative groups $\mathscr F_m^{\textup{loc}} (U,G)$ together with
continuous boundary operators $\boundary_G$ such that $\mathscr P_m (U,G)$ is
dense in $\mathscr F_m^{\textup{loc}} (U,G)$ and
\begin{equation*}
	\mathscr F_m^{\textup{loc}} (U,G) = \big \{ S + \boundary_G T \with S
	\in \mathscr R_m^{\textup{loc}} (U,G), T \in \mathscr
	R_{m+1}^{\textup{loc}} (U,G) \big \}.
\end{equation*}
In fact, $\mathscr F_m^{\textup{loc}} (U,G)$ is defined to be the quotient
$\big ( \mathscr R_m^{\textup{loc}} (U,G) \times \mathscr
R_{m+1}^{\textup{loc}} (U,G) \big ) \big / H_m$, where the \emph{closed}
subgroup $H_m$ of $\mathscr R_m^{\textup{loc}} (U,G) \times \mathscr
R_{m+1}^{\textup{loc}} (U,G)$ is given by
\begin{equation*}
	H_m = \big ( \mathbf I_m^{\textup{loc}} (U,G) \times \mathbf
	I_{m+1}^{\textup{loc}} (U,G) \big ) \cap \{ (S,T) \with S +
	\boundary_G T = 0 \}.
\end{equation*}
Finally, we obtain (in \ref{example:locally-integral-flat-chains} and
\ref{example:locally-flat-real-chains}) canonical isomorphisms
\begin{equation*}
	\mathscr F_m^{\textup{loc}} ( U, \mathbf Z ) \simeq \mathscr
	F_m^{\textup{loc}} (U), \quad \mathscr F_m^{\textup{loc}} ( \mathbf
	R^n, \mathbf R ) \simeq \mathbf F_m^{\textup{loc}} ( \mathbf R^n );
\end{equation*}
these are based on the representations of $\mathscr F_m^{\textup{loc}} (U)$
and $\mathbf F_m^{\textup{loc}} ( \mathbf R^n )$ recorded earlier.

\subsection{Remarks}

\paragraph{Constancy theorem}  The case that $M$, for some cubical subdivision
of $\mathbf R^n$, equals the $m$ skeleton minus the $m-1$ skeleton (e.g., $M =
\mathbf W_m' \without \mathbf W_{m-1}'$) is a basic ingredient for deformation
theorems.  For general $G$ and such $M$, a constancy theorem was first
formulated by W.\ Fleming in \cite[(7.2)]{MR185084} for compactly supported
normal $G$ chains; however, similar to H.\ Federer for classical coefficient
groups in \cite[4.2.3]{MR41:1976} and T.\ De Pauw and R.\ Hardt for general
$G$ in \cite[6.3]{MR3206697}, we avoid unspecific references to topology in
our argument.

\paragraph{Possible continuation of these notes}  Besides extending the
various operations studied for $\mathscr R_m^{\textup{loc}} (U,G)$ to
$\mathscr F_m^{\textup{loc}} (U,G)$, one would surely intend to add notions
for a Borel regular measure $\| S \|$ and for the support of $S$ associated
with $S$ in $\mathscr F_m^{\textup{loc}} (U,G)$ and to obtain suitable
deformation theorems.  In this regard, we would expect the representation
\begin{equation*}
	\mathscr F_m^{\textup{loc}} (U,G) = \big \{ S + \boundary_G T \with S
	\in \mathscr R_m^{\textup{loc}} (U,G), T \in \mathscr
	R_{m+1}^{\textup{loc}} (U,G) \big \}
\end{equation*}
to be particularly expedient.  For $G = \mathbf Z$ or $G = \mathbf R$, the
deformation theorems of \cite[4.1.9]{MR41:1976} and \cite[\S\,4]{MR833403},
respectively, form the key ingredients in proving the above isomorphisms for
$\mathscr F_m^{\textup{loc}} (\mathbf R^n, G)$ which, for these $G$, yield an
affirmative answer to the following question.  Assuming $m \geq 1$,
\begin{quote}
	\emph{is $\mathbf I_m^{\textup{loc}} (\mathbf R^n, G)$ equal to
	$\mathscr R_m^{\textup{loc}} (\mathbf R^n, G) \cap \big \{ S \with
	\boundary_G S \in \mathscr R_{m-1}^{\textup{loc}} ( \mathbf R^n, G)
	\big \}$?}
\end{quote}
If successful, these extensions would yield a geometric approach to $\mathscr
F_m^{\textup{loc}} (U,G)$ with $\mathscr R_m^{\textup{loc}} (U,G)$ and the
complete normed commutative group bundle $\mathbf G(n,m,G)$ taking the centre
stage instead of functional analytic completion procedures.

\paragraph{Background}  A notion of flat $G$ chains in Euclidean space was
first introduced by W.\ Fleming in \cite{MR185084}; for the special case $G =
\mathbf Z/d\mathbf Z$, H.\ Federer provided an alternative approach to W.\
Fleming's theory in \cite[4.2.26]{MR41:1976}.  Returning to general $G$, the
first six sections of \cite{MR185084} form the foundation for B.\ White's
improved deformation theorem and his subsequent rectifiability theorem of flat
chains in \cite{MR1738045} and \cite{MR1715323}.  These developments are
comprised in \cite{MR2876138} where T.\ De Pauw and R.\ Hardt extended them to
general metric spaces.

\paragraph{Development of these notes}  Originally, we intended to draw from
the most general and self-contained account \cite{MR2876138} of T.\ De Pauw
and R.\ Hardt for our applications in the third paper of our series (see
\cite{MenneScharrer2-3}). The present notes then grew out of an attempt to
provide to the reader---with the due simplifications entailed by the Euclidean
setting---the relevant definitions from \cite{MR2876138}.  Focusing on local
chains and employing the bundle $\mathbf G(n,m,G)$ appeared to be natural
choices in approaching rectifiable chains in this context.  Defining the
relevant operations on $\mathscr R_m^{\textup{loc}} ( U,G )$ then also
entailed the inclusion of some seemingly well-known but hard-to-cite
properties of rectifiable varifolds.  Next, the ambition to provide a direct
route to locally integral $G$ chains---without prior construction of $\mathscr
F_m^{\textup{loc}}(U,G)$ by completion---raised the question whether (see
Steps \ref{step:algebra-lemma}--\ref{step:simple-integral-chains}) certain
canonical homomorphisms were univalent and whether (see Step
\ref{step:closure-operation}) the group $\mathbf I_m^{\textup{loc}} (U,G)$
constructed could in fact be identified with a subgroup of $\mathscr
R_m^{\textup{loc}} (U,G)$.  The proof of consistency with previous work on
chains with classical coefficient groups and the related correction of
\cite[4.2.26]{MR41:1976} dutifully followed.  The simplification of an example
in the second paper of our series (see \cite{arXiv:2209.05955v1}) then gave rise
to adding the constancy theorem in the submanifold setting; thereby, the
adaptation of the existing proof strategies to our context led to the study of
the restriction operators.  Finally---with our primary goal obtained---, we
realised that our approach could be extended to yield a viable definition for
$\mathscr F_m^{\textup{loc}} (U,G)$.  Thus, we decided to indicate this
direction which entailed documenting the seemingly well-known but hard-to-cite
representations of $\mathscr F_m^{\textup{loc}} (U)$ and $\mathbf
F_m^{\textup{loc}} ( \mathbf R^n )$.

\paragraph{W.\ Fleming's approach}  As a possible alternative to constructing
the afore-mentioned direct route, we also considered to simply draw from W.\
Fleming's original theory in \cite{MR185084} which is formulated in the
Euclidean setting.  However, studying the first four sections thereof, we
found that some parts of the treatment required to be formalised, expanded,
and---at times---corrected to become entirely satisfactory. We accordingly
deemed it advisable to avoid just referring the reader to \cite{MR185084} for
proofs.  Nonetheless, we have been inspired by W.\ Fleming's work---for
instance, regarding how to identify $\mathbf I_m^{\textup{loc}} ( U,G)$ with a
subgroup of $\mathscr R_m^{\textup{loc}} ( U,G )$, see
\ref{remark:Fleming-inspiration}---and we believe that our notes could in fact
be partially of assistance to readers intending to study the paper
\cite{MR185084}.

\paragraph{H.\ Federer's approach}  H.\ Federer's treatment of the case $G =
\mathbf Z/d\mathbf Z$ and ours of general $G$ share the essential role of the
case of integer coefficients.  Noting $d \mathbf I_m ( \mathbf R^n ) \subset d
\mathscr F_m ( \mathbf R^n ) \subset \mathscr F_m ( \mathbf R^n ) \cap \{ T
\with T \equiv 0 \mod d \}$, his quotient approach to flat chains modulo $d$
leads to the following commutative diagram.
\begin{equation*}
	\begin{xy}
		\xymatrix{
			\mathbf I_m ( \mathbf R^n) / d \mathbf I_m ( \mathbf
			R^n ) \ar[r] \ar[d]_-{\textup{univalent}} & \mathscr
			R_m ( \mathbf R^n) / d \mathscr R_m ( \mathbf R^n )
			\ar[r] \ar[d]^-{\simeq} & \mathscr F_m ( \mathbf R^n )
			/ d \mathscr F_m ( \mathbf R^n )
			\ar[d]^-{\textup{onto}} \\
			\mathbf I_m^d ( \mathbf R^n ) \ar[r]^-{\subset} &
			\mathscr R_m^d ( \mathbf R^n ) \ar[r]^-{\subset} &
			\mathscr F_m^d ( \mathbf R^n) }
	\end{xy}
\end{equation*}
The two horizontal arrows in the top row are univalent by the closure theorem
(see \cite[4.2.16\,(2)\,(3)]{MR41:1976}); the two horizontal arrows in the
bottom row are inclusions; the middle vertical arrow is an isomorphism by
\cite[4.2.26, p.\,430]{MR41:1976} which corresponds to our isomorphism
$\rho_{\mathbf R^n,m,\mathbf Z/d\mathbf Z}$;%
\begin{footnote}%
	{Correspondence refers to the commutative diagram below, where $i :
	\mathscr R_m ( \mathbf R^n ) \to \mathscr R_m^{\textup{loc}} ( \mathbf
	R^n)$ is the inclusion, and $\iota_{\mathbf R^n,m}$ and $\mu_{\mathbf
	R^n,m,d}$ are the isomorphisms of
	\ref{miniremark:isomorphism-for-integers} and
	\ref{example:integers-mod-lambda}, respectively.
	\begin{equation*}
		\begin{xy}
			\xymatrix{
				\mathscr R_m ( \mathbf R^n ) / d \mathscr R_m
				( \mathbf R^n ) \ar[dd]^-{\simeq}
				\ar[r]_-{\simeq} & \mathscr R_m (\mathbf R^n)
				\otimes ( \mathbf Z / d \mathbf Z )
				\ar[rr]_-{\textup{univalent}}^-{i \otimes
				\mathbf 1_{\mathbf Z/ d\mathbf Z}} && \mathscr
				R_m^{\textup{loc}} ( \mathbf R^n ) \otimes (
				\mathbf Z /d \mathbf Z )
				\ar[d]_-{\simeq}^-{\iota_{\mathbf R^n,m}
				\otimes \mathbf 1_{\mathbf Z /d \mathbf Z}} \\
				&&& \mathscr R_m^{\textup{loc}} ( \mathbf R^n,
				\mathbf Z) \otimes ( \mathbf Z/d\mathbf Z)
				\ar[d]_-{\simeq}^-{\rho_{\mathbf R^n,m,\mathbf
				Z/d\mathbf Z}} \\
				\mathscr R_m^d ( \mathbf R^n)
				\ar[r]_-{\simeq}^-{\mu_{\mathbf R^n,m,d}} &
				\mathscr R_m ( \mathbf R^n, \mathbf Z/d\mathbf
				Z ) \ar[rr]^-{\subset} && \mathscr
				R_m^{\textup{loc}} ( \mathbf R^n, \mathbf
				Z/d\mathbf Z)}
		\end{xy}
	\end{equation*}}
\end{footnote}%
hence, the left vertical arrow is univalent; and the right vertical arrow is
onto by definition of $\mathscr F_m^d (\mathbf R^n)$.

\paragraph{R.\ Young's structural results} In \cite[Corollary 1.6]{MR3743699},
R.\ Young then established that the left and right vertical arrows in the
preceding commutative diagram are isomorphisms.  As the isomorphisms $A
\otimes ( \mathbf Z/d\mathbf Z) \simeq A/dA$, corresponding to commutative
groups $A$, form a natural transformation, the following commutative
diagram---in which all horizontal arrows are univalent---results.
\begin{equation*}
	\begin{xy}
		\xymatrix{
			\mathbf I_m ( \mathbf R^n) \otimes (\mathbf Z/ d
			\mathbf Z ) \ar[r] \ar[d]^-{\simeq} & \mathscr R_m (
			\mathbf R^n) \otimes ( \mathbf Z/ d \mathbf Z ) \ar[r]
			\ar[d]^-{\simeq} & \mathscr F_m ( \mathbf R^n )
			\otimes ( \mathbf Z /d \mathbf Z ) \ar[d]^-{\simeq} \\
			\mathbf I_m^d ( \mathbf R^n ) \ar[r]^-{\subset} &
			\mathscr R_m^d ( \mathbf R^n ) \ar[r]^-{\subset} &
			\mathscr F_m^d ( \mathbf R^n) }
	\end{xy}
\end{equation*}

\subsection{Acknowledgements} U.M.\ is grateful to Professor Jaigyoung Choe
for making H.\ Federer's correction list available, to Professor Thierry De
Pauw for sharing his insights on flat $G$ chains, and to Professor Salvatore
Stuvard for pointing out \cite{MR3743699} and for discussing aspects of
\cite{MR3819529,MR3819529-corrigendum}.  Some preliminaries of this paper were
drafted while U.M.\ worked at the Max Planck Institutes for Gravitational
Physics (Albert Einstein Institute) and for Mathematics in the Sciences and
the Universities of Potsdam and Leipzig, whereas the bulk was written while
U.M.\ was supported in Taiwan (R.O.C.) by means of the grants with Nos.\ MOST
108-2115-M-003-016-MY3, MOST 110-2115-M-003-017 -, and MOST 111-2115-M-003-014
- by the National Science and Technology Center (formerly termed Ministry of
Science and Technology) and as Center Scientist at the National Center for
Theoretical Sciences.  Parts of this paper were written while C.S.\ was
supported by the EPSRC as part of the MASDOC DTC at the University of Warwick,
Grant No.\ EP/HO23364/1.

\section{Notation} \label{section:notation}

Our notation follows \cite{MR3528825}; thus, we are largely consistent with
H.\ Federer's terminology in geometric measure theory (see
\cite[pp.\,669--676]{MR41:1976}) and W.\ Allard's notation for varifolds (see
\cite{MR0307015}).  We mention two exceptions: Whenever $f$ is a relation, we
employ $f[A]$ to mean $\{ y \with \text{$(x,y) \in f$ for some $x \in A$} \}$
and, whenever $T$ is an $m$ dimensional vector subspace of $\mathbf R^n$, the
canonical projection of $\mathbf R^n$ onto $T$ is denoted by $T_\natural$.
Additionally, following H.\ Federer \cite[p.\,414]{MR833403}, we say an $m$
dimensional locally flat chain $Q$ in $\mathbf R^n$ has \emph{positive
densities} if and only if $Q$ is representable by integration and $\boldsymbol
\Uptheta^{\ast m} ( \| Q \|, x ) > 0$ for $\| Q \|$ almost all $x$; by
\cite[\S\,1]{MR833403}, this concept yields the analogue for real coefficients
to that of $m$ dimensional locally rectifiable currents in $\mathbf R^n$ for
integer coefficients.

\section{Preliminaries}

\begin{definition} \label{def:normed-commutative-group}
	Suppose $G$ is a commutative group.
	
	Then, a function $\sigma : G \to \{ r \with 0 \leq r < \infty \}$ is
	termed a \emph{group norm} on $G$ if and only if $\sigma^{-1} [\{0\}]
	= \{0\}$ and, whenever $g,h \in G$, we have $\sigma (g) = \sigma (-g)$
	and $\sigma (g+h) \leq \sigma (g) + \sigma (h)$.  We associate with
	$\sigma$ the metric $\rho : G \times G \to \mathbf R$ on $G$, defined
	by $\rho (g,h)=\sigma(g-h)$ for $g,h \in G$ and often write $|g|$
	instead of $\sigma(g)$.
\end{definition}

\begin{remark} \label{remark:normed-group}
	Defining $s : G \times G \to G$ by $s(g,h) = g-h$ for $g,h \in G$, we
	see that $\Lip s \leq 1$ with respect to the metric on $G \times G$
	with value $\rho(g,g')+\rho(h,h')$ at $((g,h),(g',h')) \in ( G \times
	G)^2$.  In particular, $G$ is a topological group; it is complete as
	uniform space if and only if $\rho$ is complete.  If $H$ is a closed
	subgroup of $G$ and $p : G \to G/H$ is the quotient map, then $\dist (
	\cdot, H) \circ p^{-1}$ constitutes a group norm on $G/H$ which
	induces the quotient topology and $p$ is an open map; if $G$ is
	complete, so is $G/H$, and we have $\Lip f = \Lip ( f \circ p )$
	whenever $f$ maps $G/H$ into some metric space.  Whenever $H$ is
	another normed commutative group, we endow $G \times H$ with the group
	norm whose value at $(g,h) \in G \times H$ equals $|g| + |h| \in
	\mathbf R$.  Taking the standard group norm on $\mathbf Z$, the
	canonical bilinear map from $\mathbf Z \times G$ into $G$, mapping
	$(d,g) \in \mathbf Z \times G$ onto $d \cdot g \in G$, is Lipschitzian
	on bounded sets.
\end{remark}

\begin{example} \label{example:torus-subgroup}
	If $G = \mathbf R / \mathbf Z$, $r \in \mathbf R \without \mathbf Q$,
	and $H$ is the subgroup of $G$ generated by $\{ r + d \with d \in
	\mathbf Z \}$, then, $G$ is a complete normed group by
	\ref{remark:normed-group} and $H$ is infinite and thus dense in $G$ by
	\cite[Chapter 3, \S\,2.1, Proposition 1]{MR979294} and \cite[Chapter
	7, \S\,1.5, Corollary to Proposition 11]{MR1726872}; hence, $H$ is
	isomorphic to $\mathbf Z$ as commutative group but not so as normed
	commutative group, because $H$ has no isolated points.
\end{example}

\begin{definition}
	Suppose $G$ is a complete normed commutative group, $f$ is a function
	whose domain contains a set $A$ with values in $G$, and $\sum_{a \in
	A} |f(a)|<\infty$.
	
	Then, extending finite summation, we define the sum $\sum_A f$, also
	denoted by $\sum_{a \in A} f(a)$, in $G$ by requiring that, for
	$\epsilon > 0$, there exists a finite subset $C$ of $A$ such that
	$\big | \sum_A f - \sum_B f | \leq \epsilon$ whenever $B$ is finite
	and $C \subset B \subset A$.
\end{definition}

\begin{remark} \label{remark:summation}
	If $h : A \to Y$, then $\sum_A f = \sum_{y \in Y} \sum_{h^{-1}
	[\{y\}]} f$.
\end{remark}

\begin{definition} \label{def:restriction}
	Whenever $\phi$~measures~$X$ and $f$ is a~$\{ y \with 0 \leq y \leq
	\infty \}$~valued function whose domain contains $\phi$~almost all
	of~$X$, we define the measure $\phi \restrict f$ over~$X$ by
	\begin{equation*}
		{\textstyle ( \phi \restrict f ) (A) = \int^\ast_A f \ud \phi
		\quad \text{for $A \subset X$}}.
	\end{equation*}
\end{definition}

\begin{remark} \label{remark:restriction}
	Basic properties of this measure are listed
	in~\cite[2.4.10]{MR41:1976}.  Moreover, if $f$ is $\phi$ measurable,
	then
	\begin{equation*}
		(\phi \restrict f ) (A) = \inf \{ ( \phi \restrict f) (B)
		\with \text{$A \subset B$, $B$ is $\phi$ measurable} \} \quad
		\text{for $A \subset X$};
	\end{equation*}
	if additionally $X$ is a topological space, $\phi$ is Borel regular,
	and $\{ x \with f(x)>0 \}$ is $\phi$ almost equal to a Borel set, then
	$\phi \restrict f$ is Borel regular.
\end{remark}

\begin{remark} \label{remark:radon_measure_restriction}
	If $X$~is a locally compact Hausdorff space, $\phi$~is a Radon measure
	over~$X$, and $0 \leq f \in \mathbf L_1^\mathrm{loc} ( \phi )$, then
	$\phi \restrict f$ is a Radon measure over~$X$, provided $X$ is the
	union of a countable family of compact subsets of~$X$.  The
	supplementary hypothesis ``provided \ldots\ of~$X$'' may not be
	omitted; in fact, one may take $f$ to be the characteristic function
	of the set constructed in~\cite[9.41\,(e)]{MR0367121}.
\end{remark}

\begin{definition} \label{def:push_forward}
	Whenever $\phi$ measures~$X$, $Y$ is a topological space, and $f$ is a
	$Y$~valued function with $\dmn f \subset X$, we define the
	measure~$f_\# \phi$ over~$Y$ by
	\begin{equation*}
		f_\# \phi (B) = \phi ( f^{-1} [ B ] ) \quad \text{for $B
		\subset Y$}.
	\end{equation*}
\end{definition}

\begin{remark}
	This slightly extends~\cite[2.1.2]{MR41:1976}, where $\dmn f = X$ is
	required.
\end{remark}

\begin{lemma} \label{lemma:push_forward_borel_regular}
	Suppose $\phi$ is a Radon measure over a locally compact Hausdorff
	space~$X$, $Y$ is a separable metric space, $f$ is a $\phi$~measurable
	$Y$~valued function, and $X$ is $\phi$~almost equal to the union of a
	countable family of compact subsets of $X$.
	
	Then, $f_\# \phi$ is a Borel regular measure over~$Y$.
\end{lemma}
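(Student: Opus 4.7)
The plan is to apply Lusin's theorem for Radon measures to reduce to the case in which $f$ is continuous on each piece of an increasing compact exhaustion of $X$ (up to a $\phi$-null set), and then glue the resulting finite Borel measures on $Y$.

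First, by hypothesis, write $X = N \cup \bigcup_{i=1}^\infty K_i$ with $\phi(N) = 0$ and each $K_i$ compact; replacing $K_i$ by $K_1 \cup \cdots \cup K_i$ we may assume the sequence is increasing. Applying Lusin's theorem for the Radon measure $\phi$ on each $K_i$ with $\epsilon = 1/k$ yields a compact set $L_{i,k} \subset K_i$ with $\phi ( K_i \without L_{i,k}) < 1/k$ and $f \restrict L_{i,k}$ continuous. Define
\begin{equation*}
	E_n = \bigcup_{i \leq n} L_{i,n}.
\end{equation*}
Each $E_n$ is compact, the sequence is increasing, and, by the pasting lemma applied to the finite family of closed subsets $\{L_{i,n} \with i \leq n\}$ of $E_n$, the restriction $f \restrict E_n$ is continuous. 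A direct estimate gives $\phi ( X \without \bigcup_n E_n) = 0$.

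Second, for each $n$ set $\nu_n (A) = \phi ( E_n \cap f^{-1} [A] )$ for $A \subset Y$. Continuity of $f \restrict E_n$ ensures that $E_n \cap f^{-1}[B]$ is Borel in $X$ whenever $B \subset Y$ is Borel, so Borel subsets of $Y$ are $\nu_n$-measurable and $\nu_n$ is a finite Borel measure on $Y$; since $Y$ is separable metric, $\nu_n$ is Borel regular by the standard outer-regularity result for finite Borel measures on metric spaces. Third, since the $E_n$ form an increasing sequence of Borel sets whose union is $\phi$-conull and since $\phi$, being Radon, is Borel regular and hence continuous from below on increasing sequences of arbitrary sets, one verifies
\begin{equation*}
	f_{\#} \phi (A) = \sup_n \nu_n (A) \quad \text{for every } A \subset Y.
\end{equation*}

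Fourth, to establish Borel regularity of $f_{\#} \phi$, fix $A \subset Y$ and, using the Borel regularity of each $\nu_n$, choose Borel sets $B_n \supset A$ with $\nu_n (B_n) = \nu_n (A)$; set $B = \bigcap_n B_n$. Then $B$ is Borel, $A \subset B$, and $\nu_n (A) \leq \nu_n (B) \leq \nu_n (B_n) = \nu_n (A)$, so $\nu_n (B) = \nu_n (A)$ for every $n$. Passing to the supremum yields $f_{\#} \phi (B) = f_{\#} \phi (A)$, completing the proof.

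The main obstacle is the construction carried out in the first step: ensuring that the Lusin-continuous compact pieces can be assembled via the pasting lemma into an increasing exhaustion of $X$ by compacts on each of which $f$ is continuous. Once this structural reduction is in place, the remainder is standard measure-theoretic bookkeeping; in particular, passing Borel regularity from each $\nu_n$ to the supremum requires only that one may intersect the Borel hulls, which preserves the individual equalities $\nu_n(B) = \nu_n(A)$.
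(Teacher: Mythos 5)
Your overall architecture---Lusin reduction to compact sets on which $f$ is continuous, followed by an increasing-limit and intersection-of-Borel-hulls argument---is essentially the paper's (which reduces via \cite[2.3.5]{MR41:1976} to a single compact $C = \spt \phi$ with $f|C$ continuous). Steps 1, 3, and 4 are sound modulo a small slip: as constructed, $E_n = \bigcup_{i \leq n} L_{i,n}$ need not be increasing, since nothing forces $L_{i,n} \subset L_{i,n+1}$; this is repaired by passing to $\bigcup_{m \leq n} E_m$, which is still a finite union of compact sets on each of which $f$ is continuous.

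The genuine gap is in Step 2, where you assert that each $\nu_n = (f|E_n)_\# ( \phi \restrict E_n )$ is Borel regular ``by the standard outer-regularity result for finite Borel measures on metric spaces''. That result concerns approximating \emph{Borel} sets from outside by open sets; Borel regularity of the outer measure $\nu_n$ is the statement that every \emph{arbitrary} $A \subset Y$ admits a Borel superset of equal $\nu_n$ measure, and this does not follow from finiteness together with measurability of all Borel sets. For instance, $\mathscr L^1 \restrict E$, where $E \subset [0,1]$ has Lebesgue inner measure $0$ and outer measure $1$, is a finite outer measure on a compact metric space for which every Borel set is measurable, yet it is not Borel regular: $[0,1] \without E$ has measure $0$ while every Borel superset of it has measure $1$. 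The Borel regularity of $\nu_n$ is true, but proving it is exactly the non-trivial content of the lemma, and it requires the compactness of $E_n$ and the continuity of $f | E_n$: given $A \subset Y$ and $\varepsilon > 0$, choose by \cite[2.2.5]{MR41:1976} an open $U \subset X$ with $E_n \cap f^{-1} [A] \subset U$ and $\phi ( U ) \leq \nu_n (A) + \varepsilon$, and observe that $V = Y \without f [ E_n \without U ]$ is open (because $f [ E_n \without U ]$ is compact), satisfies $A \subset V$ and $E_n \cap f^{-1} [ V ] \subset U$, and hence $\nu_n ( V ) \leq \varepsilon + \nu_n ( A )$; intersecting such $V$ over $\varepsilon = 1/k$ gives the required Borel hull. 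This is precisely the argument in the paper's proof; without it, your Step 2 asserts the key point rather than establishing it.
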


\begin{proof}
	By \cite[2.1.2]{MR41:1976}, all closed subsets of $Y$ are $f_\# \phi$
	measurable.  To prove the Borel regularity, we employ
	\cite[2.3.5]{MR41:1976} to reduce the problem to the case, $C = \spt
	\phi$~is compact and $f | C$~is continuous.  Then, supposing $B
	\subset Y$ and $\varepsilon > 0$, we employ \cite[2.2.5]{MR41:1976} to
	choose an open subset~$U$ of~$X$ with $f^{-1} [B] \subset U$ and $\phi
	(U) \leq \varepsilon + f_\#\phi (B)$, define an open subset~$V$ of~$Y$
	by $V = Y \without f [ C \without U ]$, and verify
	\begin{equation*}
		B \subset V, \quad f^{-1} [V] \subset U \cup ( X \without C ),
	\end{equation*}
	whence it follows $f_\# \phi (V) \leq \varepsilon + f_\# \phi (B)$.
\end{proof}

\begin{remark} \label{remark:push_forward_borel_regular}
	In the context of Radon measures and proper maps, a related statement
	is available from \cite[2.2.17]{MR41:1976}.
\end{remark}

\begin{remark} \label{remark:measure-preparations}
	Apart of \ref{thm:product-rect-var} and
	\ref{thm:push-forward-rectifiable-varifolds} below,
	\ref{remark:restriction}, \ref{remark:radon_measure_restriction}, or
	\ref{lemma:push_forward_borel_regular} will also be employed in
	\ref{miniremark:def-rectifiable-G-chains} of the present paper and
	three items of \cite{arXiv:2209.05955v1}. 
\end{remark}

\begin{theorem} \label{thm:prescribed-zero-set}
	Suppose $A$ is a closed subset of $\mathbf R^n$.

	Then, there exists a nonnegative function $f : \mathbf R^n \to \mathbf
	R$ of class $\infty$ such that
	\begin{gather*}
		A = \{ x \with f(x) = 0 \}, \quad \text{$\Der^i f(x) = 0$
		whenever $x \in A$ and $i$ a positive integer}, \\
		\text{and $\{ x \with f(x) \geq y \}$ is compact for $0 < y <
		\infty$}.
	\end{gather*}
\end{theorem}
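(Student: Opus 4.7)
The plan is to construct $f$ in two stages: first build a nonnegative function $g$ of class $\infty$ on $\mathbf R^n$ vanishing to infinite order exactly on $A$, then multiply $g$ by a rapidly decaying positive factor to make the superlevel sets compact.

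For the first stage, I cover $U = \mathbf R^n \without A$ by a countable locally finite family of open balls $B_k = \{ x \with |x-x_k| < r_k \}$ with $\Clos B_k \subset U$ and $r_k \leq \tfrac12 \dist (x_k,A)$; this is possible since $U$ is an open subset of $\mathbf R^n$ and hence paracompact with a countable basis. For each $k$, I select a nonnegative $\varphi_k$ of class $\infty$ on $\mathbf R^n$ with $\spt \varphi_k \subset B_k$, $\varphi_k(x_k) > 0$, and $0 \leq \varphi_k \leq 1$. I then pick positive numbers $c_k$ satisfying
\[
	c_k \max \bigl \{ \| \Der^\alpha \varphi_k \|_\infty \with \text{$\alpha$ a multi-index with $|\alpha| \leq k$} \bigr \} \leq 2^{-k},
\]
and define $g(x) = \sum_{k=1}^\infty c_k \varphi_k(x)$. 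For any fixed multi-index $\alpha$, at most finitely many terms have $k < |\alpha|$ and the remaining tail is majorised by $\sum 2^{-k}$, so $\sum_k c_k \Der^\alpha \varphi_k$ converges absolutely and uniformly on $\mathbf R^n$; hence $g$ is of class $\infty$, bounded, termwise differentiation is valid, and $\Der^\alpha g = \sum_k c_k \Der^\alpha \varphi_k$. Since $\spt \varphi_k \subset U$, every $\Der^\alpha \varphi_k$ vanishes on $A$, so $\Der^\alpha g | A = 0$ for every $\alpha$; because the $B_k$ cover $U$ and $\varphi_k(x_k) > 0$, the set $\{ x \with g(x) = 0 \}$ equals $A$.

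For the second stage, I set $f(x) = g(x) \exp (-|x|^2)$. This function is nonnegative and of class $\infty$, and $A = \{ x \with f(x) = 0 \}$. By the Leibniz rule, $\Der^\alpha f$ is a finite linear combination of products $\Der^\beta g \cdot \Der^\gamma \exp (-|\cdot|^2)$; since every $\Der^\beta g$ vanishes on $A$, so does $\Der^\alpha f$ for every nonzero $\alpha$. Finally, $f(x) \leq \| g \|_\infty \exp (-|x|^2)$, which makes $\{ x \with f(x) \geq y \}$ a closed bounded subset of $\mathbf R^n$ for $0 < y < \infty$, hence compact. The principal technical point is the first stage: arranging that a countable sum of smooth bumps on $U$ extends to a smooth function on $\mathbf R^n$ whose derivatives vanish to infinite order across $\boundary U \subset A$; the dyadic bound on $c_k$ supplies simultaneous uniform convergence of every formal derivative series, and the supports staying inside $U$ force the limit together with all its derivatives to vanish throughout $A$.
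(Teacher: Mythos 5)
Your argument is correct in substance but takes a genuinely different route from the paper's. The paper applies Federer's covering construction \cite[3.1.13]{MR41:1976} with $\Phi = \{ \mathbf R^n \without A \}$ and sets $g = \sum_i \epsilon_i v_{s_i}$ with the specific weights $\epsilon_i = \inf \{ 2^{-i}, \exp (-3/h(s_i)) \}$; the factor $\exp(-3/h(s_i))$ simultaneously yields a derivative estimate of the form $\| \Der^j g(x) \| \leq c_j\, h(x)^{-j} \exp (-1/h(x))$ on $U$ --- which shows that the extension of $g$ by $0$ is of class $\infty$ and flat on $A$ --- and the containment $\{ x \with g(x) \geq 2^{-j} \} \subset \bigcup_{i=1}^j \mathbf B (s_i, 10 h(s_i))$, which gives compactness of the superlevel sets in one stroke. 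You instead decouple the two requirements: generic bumps with coefficients $c_k$ chosen so that every formal derivative series converges uniformly on all of $\mathbf R^n$ give smoothness and infinite-order vanishing on $A$ (the classical Whitney-type argument, with the advantage that $g$ is defined and manifestly smooth on all of $\mathbf R^n$ rather than obtained by extension), and the global factor $\exp(-|x|^2)$ then forces the superlevel sets to be bounded. Your version is more elementary and self-contained; the paper's is shorter given that \cite[3.1.13]{MR41:1976} is used elsewhere in the paper and its constants are quantitative.

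One imprecision needs repair: from ``$\varphi_k(x_k) > 0$'' and ``the $B_k$ cover $U$'' it does not follow that $g > 0$ on $U$ --- a point of $B_k$ lying away from every centre could be missed by every bump, so $\{ x \with g(x) = 0 \} = A$ is not justified as written. Require instead that $\varphi_k > 0$ throughout the open ball $B_k$ (the standard bump $\exp \big ( -1/(r_k^2 - |x-x_k|^2) \big )$, extended by $0$, does this and satisfies all your other conditions); then every $x \in U$ lies in some $B_k$ and $g(x) \geq c_k \varphi_k(x) > 0$. With that one-word repair the proof is complete.
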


\begin{proof}
	We abbreviate $U = \mathbf R^n \without A$, assume $U \neq
	\varnothing$, apply \cite[3.1.13]{MR41:1976} with $\Phi = \{ \mathbf
	R^n \without A \}$, arrange the elements of the resulting set $S$ in a
	univalent sequence $s_1, s_2, s_3, \ldots$, and, taking $\epsilon_i =
	\inf \{ 2^{-i}, \exp (-3/h(s_i))\} $, define $g : U \to \mathbf R$ by
	\begin{equation*}
		g(x) = \sum_{i=1}^\infty \epsilon_i v_{s_i} (x) \quad
		\text{for $x \in U$}.
	\end{equation*}
	For every positive integer $j$, we then estimate
	\begin{gather*}
		\| \Der^j g(x) \| \leq (129)^m V_j h(x)^{-j} \exp (-1/h(x))
		\quad \text{for $x \in U$}, \\
		{\textstyle \{ x \with g(x) \geq 2^{-j} \} \subset
		\bigcup_{i=1}^j \mathbf B(s_i,10h(s_i))}.
	\end{gather*}
	Therefore, we may take $f$ to be the extension of $g$ to $\mathbf R^n$
	by $0$.
\end{proof}

\begin{remark}
	A special case of the preceding theorem is employed in
	\cite[8.1\,(2)]{MR0307015} to demonstrate the sharpness of the
	regularity theorem in \cite[\S\,8]{MR0307015}.
\end{remark}

\begin{corollary} \label{corollary:separation-smooth-Tietze}
	Suppose $U$ is an open subset of $\mathbf R^n$ and $E_0$ and $E_1$ are
	disjoint relatively closed subsets of $U$.

	Then, there exists $f \in \mathscr E (U,\mathbf R)$ satisfying $E_i
	\subset \Int \{ x \with f(x)=i \}$ for $i \in \{ 0,1 \}$ and $0 \leq f
	\leq 1$.
\end{corollary}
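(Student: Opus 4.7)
The plan is to use Theorem \ref{thm:prescribed-zero-set} to construct the desired $f$ as a smooth quotient $g_0 / ( g_0 + g_1 )$ of two nonnegative smooth functions on $\mathbf R^n$, after first producing open neighbourhoods $V_0, V_1 \subset U$ of $E_0, E_1$ whose closures are disjoint inside $U$.

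After disposing of the trivial cases where $E_0$ or $E_1$ is empty (take $f$ constant), I would set
\begin{equation*}
    V_i = \bigl\{ x \in U \with \dist(x, E_i) < \tfrac{1}{3} \dist(x, E_{1-i}) \bigr\} \quad \text{for $i \in \{ 0, 1 \}$}.
\end{equation*}
Continuity of $\dist(\cdot, E_j)$ makes each $V_i$ open in $U$ (hence in $\mathbf R^n$); since $E_{1-i}$ is relatively closed in $U$ and disjoint from $E_i$, the distance $\dist(x, E_{1-i})$ is positive for $x \in E_i$, so $E_i \subset V_i$. To obtain the crucial disjointness $U \cap \overline{V_0} \cap \overline{V_1} = \emptyset$ (closures taken in $\mathbf R^n$), I observe that an $x$ in the intersection would satisfy, by passing to the limit in the defining inequalities, $\dist(x, E_0) \leq \tfrac{1}{3} \dist(x, E_1) \leq \tfrac{1}{9} \dist(x, E_0)$, forcing $\dist(x, E_0) = 0$; since $E_0$ is relatively closed in $U$, this yields $x \in E_0$, and symmetrically $x \in E_1$, contradicting $E_0 \cap E_1 = \emptyset$.

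Next, applying Theorem \ref{thm:prescribed-zero-set} with $A = \overline{V_i}$ for $i \in \{0, 1\}$ provides nonnegative functions $g_i \in \mathscr E (\mathbf R^n, \mathbf R)$ whose zero sets are precisely $\overline{V_i}$. The preceding disjointness yields $g_0 + g_1 > 0$ throughout $U$, so the restriction of $g_0 / (g_0 + g_1)$ to $U$ lies in $\mathscr E (U, \mathbf R)$ and takes values in $[0, 1]$. On $V_0 \subset \overline{V_0}$ we have $g_0 = 0$ and $g_1 > 0$ (since $V_0 \cap \overline{V_1} \cap U = \emptyset$), so this function equals $0$; on $V_1$, symmetrically, it equals $1$. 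Since $V_i$ is open in $U$ and contains $E_i$, taking $f$ to be this restriction yields $E_i \subset V_i \subset \Int \{ x \with f(x) = i \}$, as required.

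The main obstacle is the separation step: one needs the closures of the $V_i$ to be disjoint \emph{within $U$}, whereas their closures in $\mathbf R^n$ may very well meet on $\mathbf R^n \without U$; the factor $\tfrac{1}{3}$ rather than $1$ in the definition of $V_i$ is exactly what enforces this through the iterated estimate above. Everything beyond that is a routine smooth Urysohn-style quotient of the nonnegative functions delivered by Theorem \ref{thm:prescribed-zero-set}.
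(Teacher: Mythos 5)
Your proof is correct and follows essentially the same route as the paper: both produce disjoint relatively closed neighbourhoods of $E_0$ and $E_1$ in $U$, apply \ref{thm:prescribed-zero-set} to obtain nonnegative smooth functions $g_i$ vanishing exactly on those neighbourhoods, and take $f = g_0/(g_0+g_1)$. The only difference is cosmetic: you construct the separating sets explicitly via the $\tfrac13$-weighted distance comparison and feed their closures directly into the theorem, whereas the paper simply chooses disjoint relatively closed sets $A_i$ with $E_i \subset \Int A_i$ and applies the theorem to $\mathbf R^n \without ( U \without A_i )$.
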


\begin{proof}
	We choose, for $i \in \{ 0, 1 \}$, disjoint relatively closed sets
	$A_i$ with $E_i \subset \Int A_i$ and, by
	\ref{thm:prescribed-zero-set}, applied with $A$ replaced by $\mathbf
	R^n \without ( U \without A_i )$, also $g_i \in \mathscr E ( U,
	\mathbf R)$ satisfying $g_i \geq 0$ and $\{ x \with g_i (x) = 0 \} =
	A_i$, and take $f = g_0/(g_0+g_1)$.
\end{proof}

\begin{remark} \label{remark:auxiliary-functions}
	Apart of \ref{lemma:cutting-locally-integral-flat-chains} below,
	\ref{thm:prescribed-zero-set} or
	\ref{corollary:separation-smooth-Tietze} will also be employed in
	\ref{miniremark:bilinear}, \ref{thm:integral-chains}, and
	\ref{thm:restriction-homomorphism} of the present paper and three
	items of \cite{arXiv:2209.05955v1,MenneScharrer2-3}.
\end{remark}

\begin{theorem} \label{thm:structure-flat-chains}
	Suppose $m$ is a nonnegative integer, $n$ is a positive integer, $Z
	\in \mathbf F_m ( \mathbf R^n )$, $K$ is a compact subset of $\mathbf
	R^n$, and $\spt Z \subset \Int K$.

	Then, there exist $Q \in \mathbf F_{m,K} ( \mathbf R^n )$ and $R \in
	\mathbf F_{m+1,K} ( \mathbf R^n )$, both with positive densities, such
	that $Z = Q + \boundary R$.
\end{theorem}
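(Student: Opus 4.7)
The plan is to begin with the standard representation of a compactly supported real flat chain---drawn from H.~Federer \cite{MR833403}---writing $Z = Q_0 + \boundary R_0$ with $Q_0 \in \mathbf F_m ( \mathbf R^n )$ and $R_0 \in \mathbf F_{m+1} ( \mathbf R^n )$ both representable by integration and having positive densities, but with supports possibly extending well beyond $K$.  The task then reduces to \emph{localising} this decomposition to $K$ by means of slicing with a smooth cutoff.

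To perform the localisation, I would apply \ref{corollary:separation-smooth-Tietze} with $U = \mathbf R^n$, $E_0 = \spt Z$, and $E_1 = \mathbf R^n \without \Int K$ (disjoint relatively closed subsets by hypothesis), obtaining $u \in \mathscr E ( \mathbf R^n, \mathbf R )$ with $0 \leq u \leq 1$, with $u$ vanishing on a neighbourhood of $\spt Z$, and with $u$ equal to $1$ on a neighbourhood of $\mathbf R^n \without \Int K$.  By the slicing theory for flat chains (see \cite[4.3]{MR41:1976}), for $\mathscr L^1$ almost every $t \in ( 0, 1 )$ the slice $\langle R_0, u, t \rangle$ is a flat $m$ chain representable by integration with positive densities, the restrictions of $Q_0$, $R_0$, and $\boundary R_0$ to $\{ u < t \}$ are representable by integration, and the slicing identity
\begin{equation*}
	\boundary ( R_0 \restrict \{ u < t \} ) = ( \boundary R_0 ) \restrict
	\{ u < t \} + \langle R_0, u, t \rangle
\end{equation*}
holds.

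Fixing such a $t$, I would set $R = R_0 \restrict \{ u < t \}$ and $Q = Q_0 \restrict \{ u < t \} - \langle R_0, u, t \rangle$.  The construction of $u$ forces $\{ u < t \} \subset \Int K$, whence $\spt Q, \spt R \subset K$, so $Q \in \mathbf F_{m,K} ( \mathbf R^n )$ and $R \in \mathbf F_{m+1,K} ( \mathbf R^n )$.  Substituting the slicing identity and using linearity of the restriction operation yields
\begin{equation*}
	Q + \boundary R = Q_0 \restrict \{ u < t \} + ( \boundary R_0 )
	\restrict \{ u < t \} = ( Q_0 + \boundary R_0 ) \restrict \{ u < t \}
	= Z,
\end{equation*}
the last equality exploiting $\spt Z \subset \{ u = 0 \} \subset \{ u < t \}$.

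The main obstacle I anticipate is the very first step: extracting the unconstrained decomposition $Z = Q_0 + \boundary R_0$ with finite masses and positive densities from the mere flat norm presentation of $Z$, for which the structure theory of real flat chains from \cite{MR833403} is needed.  Once this is in hand, positivity of densities for the final $R$ descends trivially from $R_0$ under Borel restriction, and for $Q$ it follows by a coarea-type observation: for $\mathscr L^1$ almost every $t$ one has $\| Q_0 \| ( \{ u = t \} ) = 0$, so the weights of $Q_0 \restrict \{ u < t \}$ and of $\langle R_0, u, t \rangle$---the latter concentrated on $\{ u = t \}$---are mutually singular, whence $\| Q \|$ coincides with their sum and the positive density property is preserved under the subtraction.
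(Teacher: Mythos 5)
Your localisation machinery (the cutoff from \ref{corollary:separation-smooth-Tietze}, slicing $R_0$ at a generic level $t$, and the observation that $\| Q_0 \| ( \{ u = t \} ) = 0$ for almost every $t$ so that positive densities survive the subtraction of the slice) is sound in outline. But the proof has a genuine gap at precisely the point you flag as the ``main obstacle'': the unconstrained decomposition $Z = Q_0 + \boundary R_0$ with $Q_0$ and $R_0$ of finite mass \emph{and positive densities} is not a standard representation that can be drawn from \cite{MR833403}. Federer's paper defines the positive-density class and proves a deformation theorem \emph{for} chains already known to have positive densities; it does not assert that every compactly supported flat chain decomposes into such pieces. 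That decomposition is the entire substance of the present theorem---the support condition $\spt Q \cup \spt R \subset K$ is the easy refinement, not the hard one---so deferring it to an uncited ``structure theory'' leaves the proof circular.

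For comparison, the paper produces the decomposition directly, with the support constraint built in, so that no subsequent localisation is needed. One writes $Z = \sum_{i=1}^\infty P_i$ as an $\mathbf F_C$ convergent series of \emph{polyhedral} chains $P_i$ supported in a compact set $C \subset \Int K$, using \cite[4.1.23]{MR41:1976}; the key step is then \cite[4.2.23]{MR41:1976}, which converts the flat-norm smallness of $P_i$ into the existence of \emph{polyhedral} $R_i$ with $\spt R_i \subset K$ and $\sum_i \big ( \mathbf M ( P_i - \boundary R_i ) + \mathbf M ( R_i ) \big ) < \infty$. The resulting $Q = \sum_i ( P_i - \boundary R_i )$ and $R = \sum_i R_i$ are mass-convergent sums of polyhedral chains, hence carried by countably $m$ (respectively $m+1$) rectifiable sets, and \emph{only then} does \cite[\S\,1\,(V)]{MR833403} yield the positive densities. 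If you filled your first step by this argument you would already have supports in $K$, making the slicing detour superfluous; conversely, without some such polyhedral (or otherwise rectifiability-producing) approximation, neither $Q_0$ nor $R_0$ with positive densities can be obtained. A secondary point: in your final computation the term $( \boundary R_0 ) \restrict \{ u < t \}$ needs justification, since $\boundary R_0 = Z - Q_0$ need not have finite mass; you would have to define this restriction via $Z + ( - Q_0 ) \restrict \{ u < t \}$, using that $\spt Z \subset \{ u = 0 \}$, rather than by appealing to restriction of a general flat chain to an open set.
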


\begin{proof}
	We choose compact subsets $B$ and $C$ of $\mathbf R^n$ with $\spt Z
	\subset \Int B$, $B \subset \Int C$, and $C \subset \Int K$ and notice
	that $Z \in \mathbf F_{m,B} ( \mathbf R^n )$ by
	\cite[4.1.12]{MR41:1976}.  Employing \cite[4.1.23]{MR41:1976} with $K$
	replaced by $B$, we construct $P_i \in \mathbf P_{m,C} ( \mathbf R^n)$
	satisfying
	\begin{equation*}
		\sum_{i=1}^\infty \mathbf F_C ( P_i ) < \infty \quad
		\text{and} \quad \sum_{i=1}^\infty P_i = Z.
	\end{equation*}
	Using \cite[4.2.23]{MR41:1976} with $V = \Int K$ and $X = P_i$, we
	pick $R_i \in \mathbf P_{m+1} ( \mathbf R^n )$ with $\spt R_i \subset
	K$ and
	\begin{equation*}
		\sum_{i=1}^\infty \big ( \mathbf M (P_i-\boundary R_i) +
		\mathbf M (R_i) \big ) < \infty.
	\end{equation*}
	We define $Q = \sum_{i=1}^\infty (P_i-\boundary R_i) \in \mathbf
	F_{m,K} ( \mathbf R^n )$ and $R = \sum_{i=1}^\infty R_i \in \mathbf
	F_{m+1,K} ( \mathbf R^n )$ with $Z = Q + \boundary R$ by means of
	$\mathbf F_K$ convergent series and \cite[4.1.12]{MR41:1976}.
	Finally, noting that $\mathbf R^n$ is countably $( \| Q \|, m)$
	rectifiable and countably $( \| R \|, m+1)$ rectifiable, $Q$ and $R$
	have positive densities by \cite[\S\,1\,(V)]{MR833403}.
\end{proof}

\begin{corollary} \label{corollary:structure-flat-chains}
	Suppose $m$ is a nonnegative integer, $n$ is a positive integer, and
	$Z \in \mathbf F_m^{\textup{loc}} ( \mathbf R^n )$.

	Then, there exist $Q \in \mathbf F_m^{\textup{loc}} ( \mathbf R^n )$
	and $R \in \mathbf F_{m+1}^{\textup{loc}} ( \mathbf R^n )$, both with
	positive densities, such that $Z = Q + \boundary R$.
\end{corollary}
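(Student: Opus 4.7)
The plan is to reduce to the compact-support case established in Theorem~\ref{thm:structure-flat-chains} via a smooth partition of unity subordinate to a locally finite open cover, and then to reassemble the pieces as locally finite series in $\mathbf F_m^{\textup{loc}} ( \mathbf R^n )$ and $\mathbf F_{m+1}^{\textup{loc}} ( \mathbf R^n )$.

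Concretely, I would exhaust $\mathbf R^n$ by compact sets $K_j$ satisfying $K_j \subset \Int K_{j+1}$ and $\bigcup_j K_j = \mathbf R^n$ (with $K_0 = K_{-1} = \varnothing$), form the bounded open sets $U_j = \Int K_{j+1} \without K_{j-1}$, and fix a $C^\infty$ partition of unity $\{ \phi_j \}$ subordinate to the locally finite open cover $\{ U_j \}$. Multiplication by $\phi_j$ preserves the flat chain property while confining the support to the compact set $\spt \phi_j \subset U_j$ (compare \cite[4.1.18]{MR41:1976}), so each $Z_j = Z \restrict \phi_j$ lies in $\mathbf F_m ( \mathbf R^n )$ with $\spt Z_j \subset U_j$, and the identity $Z = \sum_j Z_j$ holds as a locally finite sum.

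For each $j$ I would pick a compact $K_j^\ast$ with $\spt Z_j \subset \Int K_j^\ast \subset K_j^\ast \subset K_{j+1}$ (e.g.\ a slight thickening of $\spt \phi_j$) and apply Theorem~\ref{thm:structure-flat-chains} with $K$ replaced by $K_j^\ast$ to produce $Q_j \in \mathbf F_{m,K_j^\ast} ( \mathbf R^n )$ and $R_j \in \mathbf F_{m+1,K_j^\ast} ( \mathbf R^n )$, both of positive densities, with $Z_j = Q_j + \boundary R_j$. Since $\{ K_j^\ast \}$ inherits local finiteness from $\{ U_j \}$, the series $Q = \sum_j Q_j$ and $R = \sum_j R_j$ converge unambiguously to members of $\mathbf F_m^{\textup{loc}} ( \mathbf R^n )$ and $\mathbf F_{m+1}^{\textup{loc}} ( \mathbf R^n )$, respectively, and continuity of $\boundary$ with respect to the local flat topology yields $Z = Q + \boundary R$.

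Positive densities then transfer from the summands to $Q$ via countable rectifiability: each $\mathbf R^n$ is countably $( \| Q_j \|, m )$ rectifiable, so the union of the corresponding countably many Lipschitz families still covers $\mathbf R^n$ up to a $\| Q_j \|$ null set for every $j$; combined with the measure inequality $\| Q \| \leq \sum_j \| Q_j \|$ this forces countable $( \| Q \|, m )$ rectifiability of $\mathbf R^n$, whence \cite[\S\,1\,(V)]{MR833403} supplies positive densities, and an identical argument handles $R$. I expect the main technical obstacle to be verifying that $\phi_j$ multiplication sends $\mathbf F_m^{\textup{loc}} ( \mathbf R^n )$ into $\mathbf F_m ( \mathbf R^n )$ and that the resulting locally finite series faithfully represent elements of the local flat chain groups commuting with $\boundary$; once these book-keeping items are in place, the rest reduces to the compact case already proved.
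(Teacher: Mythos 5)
Your proposal is correct and follows essentially the same route as the paper: a partition of unity subordinate to a locally finite cover reduces the statement to Theorem \ref{thm:structure-flat-chains}, the localisation $Z \restrict \phi_j$ being justified by \cite[4.1.12]{MR41:1976} (which is the reference you want here, rather than 4.1.18), and the locally finite sums reassemble into the desired $Q$ and $R$ with positive densities. The paper merely uses the specific covering and partition of unity of \cite[3.1.13, 3.1.12]{MR41:1976} in place of your compact exhaustion.
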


\begin{proof}
	In view of \cite[4.1.12]{MR41:1976}, this follows from
	\ref{thm:structure-flat-chains} using a suitable partition of unity;
	for instance, one may apply \cite[3.1.13]{MR41:1976} with $\Phi = \{
	\mathbf R^n \}$ and \cite[3.1.12]{MR41:1976} with $U = \mathbf R^n$,
	$h(x) = \frac 1{20}$ for $x \in \mathbf R^n$, $\lambda = 0$, and
	$\alpha = \beta = 20$.
\end{proof}

\begin{remark} \label{remark:classical-real-flat-chains}
	Defining the linear map $L$ from $F_{n,m}$ onto $\mathbf
	F_m^{\textup{loc}} ( \mathbf R^n )$, where
	\begin{equation*}
		F_{n,m} = \big (\mathbf F_m^{\textup{loc}} ( \mathbf R^n )
		\times \mathbf F_{m+1}^{\textup{loc}} ( \mathbf R^n ) \big )
		\cap \{ (Q,R) \with \text{$Q$ and $R$ have positive densities}
		\},
	\end{equation*}
	by $L ( Q,R ) = Q + \boundary R$ for $(Q,R) \in F_{n,m}$, we obtain a
	vector space isomorphism
	\begin{equation*}
		F_{n,m} / \ker L \simeq \mathbf F_m^{\textup{loc}} ( \mathbf
		R^n ).
	\end{equation*}
\end{remark}

\begin{lemma} \label{lemma:cutting-locally-integral-flat-chains}
	Suppose $m$ is a nonnegative integer, $n$ is a positive integer, $U$
	is an open subset of $\mathbf R^n$, $A$ is a relatively closed subset
	of $U$, $S$ is an open subset of $U$, $Z \in \mathscr
	F_m^{\textup{loc}} ( U )$, $W \in \mathscr F_m ( U )$, and
	\begin{equation*}
		A \cap \spt Z \subset S, \quad A \cap \spt ( Z - W ) =
		\varnothing.
	\end{equation*}

	Then, there exist $Q \in \mathscr R_m ( U )$ and $R \in \mathscr
	R_{m+1} ( U )$ such that
	\begin{equation*}
		\spt Q \cup \spt R \subset S, \quad A \cap \spt ( Z - Q -
		\boundary R ) = \varnothing.
	\end{equation*}
\end{lemma}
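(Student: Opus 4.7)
\medskip

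The plan is to decompose $W$ as $W = Q_0 + \boundary R_0$ with compactly supported $Q_0 \in \mathscr R_m(U)$ and $R_0 \in \mathscr R_{m+1}(U)$---such a decomposition of a compactly supported integral flat chain is the classical identity $\mathscr F_m(U) = \{Q + \boundary R \with Q \in \mathscr R_m(U), R \in \mathscr R_{m+1}(U)\}$ recalled in the introduction (see \cite[4.1.24]{MR41:1976})---and then to cut these rectifiable pieces by slicing with a smooth auxiliary function so that the resulting chains sit inside $S$ yet still recover $Z$ in a neighbourhood of $A$.

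Since $A \cap \spt Z$ is relatively closed in $U$, being the intersection of two relatively closed sets, and is contained in the open subset $S$ of $U$, the sets $E_0 = U \without S$ and $E_1 = A \cap \spt Z$ are disjoint relatively closed subsets of $U$. Applying \ref{corollary:separation-smooth-Tietze}, we obtain $f \in \mathscr E(U, \mathbf R)$ with $0 \leq f \leq 1$, $E_1 \subset \Int \{x \with f(x)=1\}$, and $E_0 \subset \Int \{x \with f(x)=0\}$; in particular $\{x \with f(x) > 0\} \subset S$. By the slicing theory for integer rectifiable currents (see \cite[4.3.1, 4.3.8]{MR41:1976}), for $\mathscr L^1$ almost every $r \in (0,1)$ the slice $\langle R_0, f, r\rangle$ lies in $\mathscr R_m(U)$ and satisfies
\begin{equation*}
	\boundary \big( R_0 \restrict \{x \with f(x) > r\} \big) = ( \boundary R_0 ) \restrict \{x \with f(x) > r\} + \langle R_0, f, r\rangle.
\end{equation*}
Fixing such an $r$ and defining $Q = Q_0 \restrict \{x \with f(x) > r\} - \langle R_0, f, r\rangle$ and $R = R_0 \restrict \{x \with f(x) > r\}$ then yields rectifiable chains, compactly supported because $Q_0$ and $R_0$ are, and with supports contained in $\{x \with f(x) \geq r\} \subset S$. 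A direct computation gives $Q + \boundary R = W \restrict \{x \with f(x) > r\}$.

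It then remains to verify $A \cap \spt(Z - W \restrict \{f > r\}) = \varnothing$. Given $x \in A$, either $x \in A \cap \spt Z$, which lies in the open set $\Int\{f=1\}$ on a neighbourhood of which $W \restrict \{f > r\} = W$ and $Z - W = 0$ by hypothesis; or $x \notin \spt Z$, so $Z$ vanishes on some open neighbourhood $V$ of $x$, which can be shrunk so $Z - W$ also vanishes on $V$, forcing $W = 0$ and hence $W \restrict \{f > r\} = 0$ on $V$. In either case $Z - W \restrict \{f > r\}$ vanishes near $x$, as required.

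The main technical ingredient is ensuring that $\langle R_0, f, r\rangle$ is rectifiable and that the slicing identity holds at a suitable $r$; once this is invoked, the rest is elementary bookkeeping of supports enabled by the smooth Tietze-type separation of \ref{corollary:separation-smooth-Tietze}.
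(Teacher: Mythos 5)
Your argument is correct and follows essentially the same route as the paper's proof: decompose $W = Q_0 + \boundary R_0$ into compactly supported rectifiable pieces, separate the relevant part of $\spt Z$ from $U \without S$ by the function of \ref{corollary:separation-smooth-Tietze}, and cut by restriction to a level set at which the slice of $R_0$ is rectifiable and the boundary-restriction identity holds. The only organisational difference is that the paper checks the final support condition on an auxiliary open neighbourhood $T$ of $A$ all at once, whereas you argue pointwise on $A$ in two cases; both verifications rest on the same locality of the operation $W \restrict \{ x \with f(x) > r \}$ furnished by \cite[4.2.1, 4.3.1, 4.3.4, 4.3.6]{MR41:1976}.
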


\begin{proof}
	We pick $X \in \mathscr R_m ( U )$ and $Y \in \mathscr R_{m+1} ( U )$
	with $W = X + \boundary Y$, observe that we may choose an open subset
	$T$ of $U$ with
	\begin{equation*}
		A \subset T, \quad ( \Clos T ) \cap \spt Z \subset S, \quad T
		\cap \spt ( Z - W ) = \varnothing,
	\end{equation*}
	and obtain a locally Lipschitzian function $f : U \to \mathbf R$ with
	\begin{equation*}
		\text{$f(x) \leq 0$ for $x \in T \cap \spt Z$}, \quad
		\text{$f(x) \geq 1$ for $x \in U \without S$}
	\end{equation*}
	from \ref{corollary:separation-smooth-Tietze}.  Noting $T \cap \spt (
	X + \boundary Y ) \subset \spt Z$, we employ \cite[4.2.1, 4.3.1,
	4.3.4, 4.3.6]{MR41:1976} to select $0 < y < 1$ with $\langle Y,f,y
	\rangle \in \mathscr R_m ( U)$ and
	\begin{equation*}
		T \cap \spt \big ( X \restrict \{ x \with f(x) > y \} +
		\boundary ( Y \restrict \{ x \with f(x) > y \} ) - \langle
		Y,f,y \rangle \big ) \subset \spt Z.
	\end{equation*}
	Thus, we may take $Q = X \restrict \{ x \with f(x) \leq y \} + \langle
	Y,f,y \rangle$ and $R = Y \restrict \{ x \with f(x) \leq y \}$ because
	$T \cap \spt ( X+\boundary Y - Q - \boundary R ) \subset T \cap \{ x
	\with f(x) \geq y \} \cap \spt Z = \varnothing$.
\end{proof}

\begin{theorem} \label{thm:locally-integral-flat-chains}
	Suppose $m$ and $n$ are integers, $m \geq 0$, $n \geq 1$, $U$ is an
	open subset of $\mathbf R^n$, and $Z \in \mathscr F_m^{\textup{loc}} (
	U )$.

	Then, there exist $Q \in \mathscr R_m^{\textup{loc}} ( U )$ and $R \in
	\mathscr R_{m+1}^{\textup{loc}} ( U )$ such that $Z = Q + \boundary
	R$.
\end{theorem}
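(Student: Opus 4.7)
The plan is to iterate the cutting Lemma \ref{lemma:cutting-locally-integral-flat-chains} along an exhaustion of $U$ and assemble the resulting rectifiable pieces into locally finite sums.

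Fix an exhaustion by open sets $\varnothing = V_0 \subset V_1 \subset V_2 \subset \cdots$ with each $\Clos V_j$ compact, $\Clos V_j \subset V_{j+1}$, and $\bigcup_j V_j = U$; set $A_j = \Clos V_j$ and $S_j = V_{j+1} \without \Clos V_{j-2}$, so that $\{S_j\}$ is a locally finite open cover of $U$. I would construct inductively pairs $(Q_j, R_j) \in \mathscr R_m(U) \times \mathscr R_{m+1}(U)$ with $\spt Q_j \cup \spt R_j \subset S_j$ such that, writing $Z_j := Z - \sum_{k<j}(Q_k + \partial R_k)$, one has $A_j \cap \spt Z_{j+1} = \varnothing$. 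The inductive hypothesis $A_{j-1} \cap \spt Z_j = \varnothing$ yields $A_j \cap \spt Z_j \subset A_j \without A_{j-1} \subset S_j$, which verifies the first hypothesis of the cutting lemma applied to $Z_j$.

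To supply the companion $W_j \in \mathscr F_m(U)$ required by the cutting lemma, I would slice $Z_j$ (which lies in $\mathscr F_m^{\textup{loc}}(U)$, since $Z$ does and the already constructed corrections are compactly supported and flat) by a locally Lipschitz function $f_j : U \to \mathbf R$ with $f_j \leq 0$ on $A_j$ and $f_j \geq 1$ outside some compact subset of $U$ (obtainable from Corollary \ref{corollary:separation-smooth-Tietze}); for a.e.\ $y \in (0,1)$, the restriction $W_j := Z_j \restrict \{x \with f_j(x) < y\}$ is a well-defined element of $\mathscr F_m(U)$ and $\spt(Z_j - W_j) \subset \{x \with f_j(x) \geq y\}$, which is disjoint from $A_j$. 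This uses the flat-chain slicing calculus of \cite[4.2.1, 4.3.1, 4.3.6]{MR41:1976}, i.e.\ the very same machinery invoked inside the proof of Lemma \ref{lemma:cutting-locally-integral-flat-chains}. Applying that lemma then produces $(Q_j, R_j)$ with the desired support and cancellation properties.

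Local finiteness of $\{S_j\}$ allows one to define $Q := \sum_j Q_j \in \mathscr R_m^{\textup{loc}}(U)$ and $R := \sum_j R_j \in \mathscr R_{m+1}^{\textup{loc}}(U)$. For each $i$, the supports of $(Q_k, R_k)$ with $k \geq i+2$ are disjoint from $A_i$ (because $S_k$ is), so $A_i \cap \spt(Z - Q - \partial R) = A_i \cap \spt Z_{i+2} = \varnothing$ by induction; since $\bigcup_i A_i = U$, we conclude $Z = Q + \partial R$. The main obstacle in this program is the careful construction of the compactly supported flat companion $W_j$: one must verify that the slicing of a locally flat chain by a proper Lipschitz function produces an element of $\mathscr F_m(U)$ with the desired support bound, a step that parallels the slicing argument used inside Lemma \ref{lemma:cutting-locally-integral-flat-chains} itself.
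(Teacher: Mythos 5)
Your combinatorial skeleton (locally finite annular cover $S_j$ from an exhaustion, inductive application of Lemma \ref{lemma:cutting-locally-integral-flat-chains}, locally finite summation) is sound and parallels the paper's argument. The gap is in the step you yourself flag as the ``main obstacle'': the production of the compactly supported companion $W_j \in \mathscr F_m ( U )$ with $A_j \cap \spt ( Z_j - W_j ) = \varnothing$. You propose $W_j = Z_j \restrict \{ x \with f_j (x) < y \}$, but restriction of a \emph{locally} integral flat chain to a sublevel set is not supplied by \cite[4.2.1, 4.3.1, 4.3.6]{MR41:1976}. Inside the proof of Lemma \ref{lemma:cutting-locally-integral-flat-chains} that calculus is applied only to $X \in \mathscr R_m ( U )$ and $Y \in \mathscr R_{m+1} ( U )$ coming from a decomposition $W = X + \boundary Y$ of a chain already known to lie in $\mathscr F_m ( U )$; rectifiable currents are representable by integration, so $X \restrict \{ f \leq y \}$, $Y \restrict \{ f \leq y \}$, and $\langle Y, f, y \rangle$ make sense. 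For $Z_j \in \mathscr F_m^{\textup{loc}} ( U )$ no such decomposition into locally rectifiable pieces is available --- that is precisely the assertion of the theorem --- and $Z_j$ need not be representable by integration, so $Z_j \restrict \{ f_j < y \}$ is undefined. The argument is therefore circular at exactly the point where the real work lies.

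What the definition of $\mathscr F_m^{\textup{loc}} ( U )$ gives you for free is only a \emph{pointwise local} companion: each point has \emph{some} neighbourhood $T$ and \emph{some} $W \in \mathscr F_m ( U )$ with $T \cap \spt ( Z - W ) = \varnothing$. Passing from this to a single companion valid on a prescribed large compact set $A_j = \Clos V_j$ is itself a patching problem of the same nature as the theorem. The paper resolves this by letting the cover adapt to the companions rather than the other way around: it introduces the class $\Phi$ of open sets on which companions exist, builds the gauge $h$ from $\Phi$, and uses the controlled covering \cite[3.1.13]{MR41:1976} so that each ball $\mathbf B ( s_i, 10 h(s_i) )$ on which the cutting lemma is invoked lies inside some $T \in \Phi$; the companion for $Z_{i-1}$ is then $W - \sum_{j<i} ( Q_j + \boundary R_j )$ with $W$ the companion for $Z$ on that $T$. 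To repair your proof you must either adopt such an adapted cover, or first establish by a finite induction with the cutting lemma (over a finite subcover of $A_j$ by sets in $\Phi$) that companions exist on every compact subset of $U$; with that lemma in hand your exhaustion argument goes through.
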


\begin{proof}
	Assume $U \neq \varnothing$.  Let $\Phi$ denote the class of all open
	subsets $T$ of $U$ such that, for some $W \in \mathscr F_m ( U )$, we
	have $T \cap \spt ( Z - W ) = \varnothing$; hence $\bigcup \Phi = U$.
	We define $h : U \to \{ r \with 0 < r < \infty \}$ by
	\begin{equation*}
		h(x) = {\textstyle\frac 1{20}} \sup \{ \inf \{ 1, \dist (x,
		\mathbf R^n \without T ) \} \with T \in \Phi \} \quad
		\text{for $x \in U$}.
	\end{equation*}
	Applying \cite[3.1.13]{MR41:1976}, we obtain a set $S$ such that,
	arranging its elements into an univalent sequence $s_1, s_2, s_3,
	\ldots$ in $U$, we have $U = \bigcup_{i=1}^\infty \mathbf B
	(s_i,5h(s_i))$ and
	\begin{equation*}
		\card \{ i \with \mathbf B (x,10h(x)) \cap \mathbf
		B(s_i,10h(s_i)) \neq \varnothing \} \leq (129)^n \quad
		\text{for $x \in U$}.
	\end{equation*}

	Next, we construct $R_1, R_2, R_3, \ldots$ in $\mathscr R_m ( U )$ and
	$Q_1, Q_2, Q_3, \ldots$ in $\mathscr R_{m+1} ( U )$ satisfying $\spt
	Q_i \cup \spt R_i \subset \mathbf B (s_i,10h(s_i))$ for every positive
	integer $i$ and
	\begin{equation*}
		K_i \cap \spt Z_i = \varnothing \quad \text{for every
		nonnegative integer $i$},
	\end{equation*}
	where we abbreviated $K_i = \bigcup_{j=1}^i \mathbf B ( s_j, 5
	h(s_j))$ and $Z_i = Z - \sum_{j=1}^i ( Q_j + \boundary R_j)$; in fact,
	this is trivial for $i = 0$ and, if $R_1, \ldots, R_{i-1}$ and $Q_1,
	\ldots, Q_{i-1}$ with these properties have been constructed for some
	positive integer $i$, then, noting that $K_i \cap \spt Z_{i-1} \subset
	\mathbf B(s_i,5h(s_i))$, we may take $Q_i$ and $R_i$ to be the
	currents furnished by applying
	\ref{lemma:cutting-locally-integral-flat-chains} with $A$, $S$ and $Z$
	replaced by $K_i \cap \mathbf B (s_i,10h(s_i))$, $\mathbf U
	(s_i,10h(s_i))$, and $Z_{i-1}$ because $( K_i \without \mathbf B
	(s_i,10h(s_i)) ) \cap \spt Z_i \subset K_{i-1} \cap \spt Z_{i-1} =
	\varnothing$.

	Let $Q = \sum_{i=1}^\infty Q_i$ and $R = \sum_{i=1}^\infty R_i$.  If
	$T$ is open and $\Clos T$ is a compact subset of $U$, then $T \subset
	K_i$ for some $i$, hence $T \cap \spt ( Z - Q - \boundary R ) =
	\varnothing$.
\end{proof}

\begin{remark} \label{remark:classical-integral-flat-chains}
	Defining the homomorphism $\eta$ from $\mathscr R_m^{\textup{loc}} (U)
	\times \mathscr R_{m+1}^{\textup{loc}} (U)$ onto $\mathscr
	F_m^{\textup{loc}} ( U )$ by $\eta ( Q,R ) = Q + \boundary R$ for
	$(Q,R) \in \mathscr R_m^{\textup{loc}} (U) \times \mathscr
	R_{m+1}^{\textup{loc}} (U)$, we obtain an isomorphism
	\begin{equation*}
		\big ( \mathscr R_m^{\textup{loc}} ( U ) \times \mathscr
		R_{m+1}^{\textup{loc}} (U ) \big ) \big / \ker \eta \simeq
		\mathscr F_m^{\textup{loc}} ( U).
	\end{equation*}
\end{remark}

\begin{remark} \label{remark:Alberti-Marchese-flat}
	For the flat $G$ chains of \cite{MR185084}, a representation analogous
	to those of \ref{thm:structure-flat-chains},
	\ref{corollary:structure-flat-chains}, and
	\ref{thm:locally-integral-flat-chains} was obtained in
	\cite[Proposition 2.1]{CVGMT-AlbMar22}.
\end{remark}

\begin{theorem} \label{thm:rectifiability}
	Suppose $\phi$ is a Radon measure over an open subset $X$ of $\mathbf
	R^n$, $m$ is a nonnegative integer, $X$ is countably $( \phi, m)$
	rectifiable, and
	\begin{equation*}
		\boldsymbol \Uptheta^{\ast m} ( \phi, a ) < \infty \quad
		\text{for $\phi$ almost all $a$}.
	\end{equation*}
	
	Then, $\phi$ is the weight of some member of $\mathbf{RV}_m ( X )$
	and, whenever $R$ is a compact $m$ rectifiable subset of $X$ and $f$
	maps a subset of $X$ into $\mathbf R^\nu$, we have that, for $\mathscr
	H^m$ almost all $a \in R$ with $\boldsymbol \Uptheta^m ( \phi, a ) >
	0$,
	\begin{gather*}
		\text{$\Tan^m ( \phi,a ) = \Tan^m ( \mathscr H^m \restrict R,
		a )$ is an $m$ dimensional vector space}, \\
		( \phi, m ) \ap \Der f (a) = ( \mathscr H^m \restrict R, m )
		\ap \Der f (a).
	\end{gather*}
\end{theorem}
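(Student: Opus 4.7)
\emph{The plan is} to first represent $\phi$ as a positive density times $\mathscr H^m$ restricted to a countably $m$-rectifiable Borel set (whence the varifold), and then coordinate the tangent structures of $\phi$ and $\mathscr H^m \restrict R$ by passing to their common rectifiable extension.

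\emph{For the first conclusion}, countable $( \phi, m )$ rectifiability supplies a sequence of compact $m$-rectifiable Borel sets $E_1, E_2, \ldots \subset X$ with $\phi ( X \without \bigcup_i E_i ) = 0$; by replacing $E_i$ with $E_i \without \bigcup_{j < i} E_j$, we may take them disjoint. On each $E_i$, the finite upper $m$-density hypothesis together with \cite[2.10.19]{MR41:1976} yields $\phi \restrict E_i \ll \mathscr H^m \restrict E_i$, so by Radon--Nikodym there exist nonnegative $\mathscr H^m$-measurable densities $\theta_i$ on $E_i$ with $\phi \restrict E_i = ( \mathscr H^m \restrict E_i ) \restrict \theta_i$. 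Setting $E = \bigcup_i ( E_i \cap \{ \theta_i > 0 \} )$ and $\theta = \theta_i$ on $E_i \cap E$ yields $\phi = ( \mathscr H^m \restrict E ) \restrict \theta$ with $\theta$ positive and finite $\mathscr H^m$-a.e. on $E$; equipping the approximate tangent planes $\Tan^m ( \mathscr H^m \restrict E, a )$ with multiplicity $\theta ( a )$ at $\mathscr H^m$-a.e.\ $a \in E$ then produces the desired member of $\mathbf{RV}_m ( X )$.

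\emph{For the tangent plane equality}, set $E' = E \cup R$, which is again countably $m$-rectifiable. By \cite[3.2.19]{MR41:1976}, at $\mathscr H^m$-a.e.\ $a \in R$, both $\Tan^m ( \mathscr H^m \restrict E', a )$ and $\Tan^m ( \mathscr H^m \restrict R, a )$ are $m$-dimensional vector spaces which coincide, since standard density properties of rectifiable sets give $\boldsymbol \Uptheta^m ( \mathscr H^m \restrict ( E' \without R ), a ) = 0$ for $\mathscr H^m$-a.e.\ $a \in R \without E$. Extending $\theta$ by $0$ on $E' \without E$, we view $\phi = ( \mathscr H^m \restrict E' ) \restrict \theta$; Lebesgue differentiation on the rectifiable set $E'$ shows $\theta$ is approximately continuous along $E'$ at $\mathscr H^m$-a.e.\ $a \in E'$. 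At such an $a \in R$ with $\boldsymbol \Uptheta^m ( \phi, a ) > 0$, we must then have $\theta ( a ) > 0$ and the ratios $\phi ( \mathbf B ( a, r ) ) / ( \mathscr H^m \restrict E' ) ( \mathbf B ( a, r ) )$ tend to $\theta ( a )$, so $\phi$ is comparable with $\mathscr H^m \restrict E'$ on small balls around $a$, yielding $\Tan^m ( \phi, a ) = \Tan^m ( \mathscr H^m \restrict E', a ) = \Tan^m ( \mathscr H^m \restrict R, a )$.

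\emph{For the approximate derivative identity} at such an $a$, the local comparability of $\phi$ and $\mathscr H^m \restrict R$ (both equivalent to $\mathscr H^m \restrict E'$ up to bounded positive factors on small balls centred at $a$) makes the notion of ``$m$-dimensional approximate limit at $a$'' coincide for the two measures, so the two approximate derivatives of $f$ at $a$ agree as linear maps on the common tangent space. \emph{The hard part} is the coordination at points $a \in R \without E$, where $\phi$ may vanish in a neighbourhood basis away from $a$ yet $\boldsymbol \Uptheta^m ( \phi, a ) > 0$; handling this demands carefully passing through the enlarged carrier $E'$ and invoking the Lebesgue differentiation theorem for rectifiable measures in order to synchronise the density $\theta$ with the tangent plane structure of $R$.
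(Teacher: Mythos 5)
Your overall strategy---pinching $\phi$ between positive multiples of $\mathscr H^m$ on rectifiable pieces and then exploiting the locality of $\Tan^m$ and of approximate differentiation---is the right one and is close in spirit to the paper's. But the detour through the carrier $E$ and the enlarged set $E' = E \cup R$ contains a genuine gap: $E$ is only a countable union of sets of finite $\mathscr H^m$ measure, so $\mathscr H^m \restrict E'$ need not be locally finite (the pieces $E_i$ may accumulate near $R$ with unbounded total measure while $\phi$ stays Radon because the densities $\theta_i$ are summably small). Consequently, neither the claim $\boldsymbol \Uptheta^m ( \mathscr H^m \restrict ( E' \without R ), a ) = 0$ for $\mathscr H^m$ almost all $a \in R$ nor the ``Lebesgue differentiation along $E'$'' giving approximate continuity of $\theta$ is available as stated; both \cite[2.10.19\,(4)]{MR41:1976} and the differentiation theory require the measure being differentiated to be at least locally finite, and in the accumulating-segments example the density of $E' \without R$ at points of $R$ is $+\infty$, not $0$. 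Moreover, the ``hard part'' you leave open at the end is in fact vacuous: since $\phi ( X \without E ) = 0$, \cite[2.10.19\,(3)]{MR41:1976} applied to $( R \without E ) \cap \{ a \with \boldsymbol \Uptheta^{\ast m} ( \phi, a ) \geq 1/k \}$ shows that $\mathscr H^m$ almost no point of $R \without E$ has positive upper density; you should have closed this case outright instead of deferring it to the (unavailable) differentiation argument.

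The paper avoids all of this by never leaving $R$: it sets $R_i = R \cap \{ a \with 1/i < \boldsymbol \Uptheta^{\ast m} ( \phi, a ) < i \}$, which have finite $\mathscr H^m$ measure and exhaust $R \cap \{ a \with \boldsymbol \Uptheta^m ( \phi, a ) > 0 \}$ up to an $\mathscr H^m$ null set, obtains from \cite[2.10.19\,(1)\,(3)]{MR41:1976} that $\phi \restrict R_i$ and $\mathscr H^m \restrict R_i$ are pinched between positive multiples of each other, and then applies \cite[2.10.19\,(4)]{MR41:1976} together with \cite[3.2.19]{MR41:1976} to pass $\Tan^m$ and $( \cdot, m ) \ap \Der f$ from $\phi$ through $\phi \restrict R_i$ and $\mathscr H^m \restrict R_i$ to $\mathscr H^m \restrict R$. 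You should also make your last step precise: the equality of approximate derivatives is exactly the content of \cite[3.2.19]{MR41:1976} and does not follow merely from ``comparability on small balls''; and for membership in $\mathbf{RV}_m ( X )$ you still owe the $\phi$ measurability of $a \mapsto \Tan^m ( \phi, a )$ and the identity $\phi = \mathscr H^m \restrict \boldsymbol \Uptheta^m ( \phi, \cdot )$, which the paper obtains from \cite[3.2.25, 3.2.28\,(2)\,(4)]{MR41:1976} and \cite[2.8\,(5), 3.5\,(1a)]{MR0307015}.
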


\begin{proof}
	Whenever $R$ is a compact $m$~rectifiable subset of $X$, noting
	$\mathscr H^m (R) < \infty$, we infer $\phi ( R \cap \{ a \with
	\boldsymbol \Uptheta^m ( \phi, a ) = 0 \}) = 0$ from
	\cite[2.10.19\,(1)]{MR41:1976}.  It follows that
	\begin{equation*}
		\boldsymbol \Uptheta^{\ast m} ( \phi, a ) > 0 \quad \text{for
		$\phi$ almost all $a$}.
	\end{equation*}
	
	Next, suppose $f$ maps a subset of $X$ into $\mathbf R^\nu$, $R$ is a
	compact $m$ rectifiable subset of $X$, and the Borel sets $R_i$ are
	defined by
	\begin{equation*}
		R_i = R \cap \{ a \with 1/i < \boldsymbol \Uptheta^{\ast m} (
		\phi, a ) < i \}
	\end{equation*}
	whenever $i$ is a positive integer; hence, there holds
	\begin{equation*}
		\mathscr H^m \restrict R_i \leq \phi \restrict R_i \leq 2^m i
		\, \mathscr H^m \restrict R_i
	\end{equation*}
	by \cite[2.10.19\,(1)\,(3)]{MR41:1976}, and
	\begin{gather*}
		\boldsymbol \Uptheta^m ( \phi \restrict X \without R_i, a ) =
		0, \quad \boldsymbol \Uptheta^m ( \mathscr H^m \restrict R
		\without R_i, a ) = 0,
	\end{gather*}
	fo  $\mathscr H^m$ almost all $a \in R_i$
	by \cite[2.10.19\,(4)]{MR41:1976}, whence we infer
	\begin{align*}
		\Tan^m ( \phi, a ) & = \Tan^m ( \phi \restrict R_i, a) \\
		& = \Tan^m ( \mathscr H^m \restrict R_i, a ) = \Tan^m (
		\mathscr H^m \restrict R, a ) \in \mathbf G(n,m), \\
		( \phi, m ) \ap \Der f (a) & = ( \phi \restrict R_i, m ) \ap
		\Der f (a) \\
		& = ( \mathscr H^m \restrict R_i, m ) \ap \Der f(a) = (
		\mathscr H^m \restrict R, m ) \ap \Der f (a)
	\end{align*}
	for $\mathscr H^m$ almost all $a \in R_i$ by \cite[3.2.19]{MR41:1976}.
	The function mapping $\phi$ almost all $a \in R_i$ onto $\Tan^m (
	\phi, a ) \in \mathbf G(n,m)$ hence is $\phi \restrict R_i$ measurable
	by \cite[3.2.25, 3.2.28\,(2)\,(4)]{MR41:1976}.  Thus, defining $V \in
	\mathbf V_m ( X )$ with $\| V \| = \phi$ by
	\begin{equation*}
		V(k) = {\textstyle\int} k(x,\Tan^m ( \phi, x)) \ud \phi \, x
		\quad \text{for $k \in \mathscr K ( X \times \mathbf
		G(n,m))$},
	\end{equation*}
	we notice that, for $\phi$ almost all $a$, we have
	\begin{equation*}
		V^{(a)} ( \beta ) = \beta ( \Tan^m ( \phi, a ) ) \quad
		\text{for $\beta \in \mathscr K ( \mathbf G (n,m))$}
	\end{equation*}
	by \cite[2.8.18, 2.9.13]{MR41:1976}.  Since $\phi = \mathscr H^m
	\restrict \boldsymbol \Uptheta^m ( \phi, \cdot )$ by
	\cite[2.8\,(5)]{MR0307015}, we have $V \in \mathbf{RV}_m ( X)$ by
	\cite[3.5\,(1a)]{MR0307015} and the conclusion follows.
\end{proof}

\begin{remark} \label{remark:rectifiable-varifolds}
	Recalling \cite[3.5\,(1b)]{MR0307015}, we infer the following
	assertion: If also $\chi$ satisfies the hypotheses
	of~\ref{thm:rectifiability} with $\phi$ replaced by $\chi$, then, we
	have that, for $\mathscr H^m$ almost all $a$ with $\boldsymbol
	\Uptheta^m ( \phi, a ) > 0$ and $\boldsymbol \Uptheta^m ( \chi, a ) >
	0$,
	\begin{gather*}
		\text{$\Tan^m ( \phi, a ) = \Tan^m ( \chi, a )$ is an $m$
		dimensional vector space} \\
		\text{and $( \phi, m ) \ap \Der f(a) = ( \chi, m ) \ap \Der f
		(a)$.}
	\end{gather*}
\end{remark}

\begin{theorem} \label{thm:push-forward-rectifiable-varifolds}
	Suppose $X$ and $Y$ are open subsets of $\mathbf R^n$ and $\mathbf
	R^\nu$, respectively, $f : X \to Y$ is locally Lipschitzian, $m$ is a
	nonnegative integer with $m \leq n$ and $m \leq \nu$, $V \in
	\mathbf{RV}_m ( X )$, $f | \spt \| V \|$ is proper, and $g : Y \to
	\mathbf R^\mu$ is a locally Lipschitzian map.
	
	Then, a member $W$ of $\mathbf{RV}_m ( Y )$ may be defined by
	\begin{equation*}
		W(k) = {\textstyle\int_A} k (f(x), \im ( \| V \|, m ) \ap \Der
		f(x)) ( \| V \|, m ) \ap \Jac_m f(x) \ud \| V \| \, x
	\end{equation*}
	whenever $k \in \mathscr K ( Y \times \mathbf G (\nu,m))$, where $A =
	\{ x \with (\| V \|, m ) \ap \Jac_m f(x)>0 \}$ and $(\| V \|, m ) \ap
	\Jac_m f = \big \| \bigwedge_m ( \| V \|, m ) \ap \Der f \big \|$; in
	particular, we have
	\begin{equation*}
		\| W \| = f_\# ( \| V \| \restrict (\|V\|,m) \ap \Jac_m f ).
	\end{equation*}
	Moreover, for $\mathscr H^m$ almost all $y \in Y$, there holds
	\begin{equation*}
		\boldsymbol \Uptheta^m ( \| W \|, y ) = \sum_{x \in f^{-1} [
		\{ y \} ]} \boldsymbol \Uptheta^m ( \| V \|, x ),
	\end{equation*}
	and, if $f(x) = y$ and $\boldsymbol \Uptheta^m ( \| V \|, x ) > 0$,
	then
	\begin{equation*}
		\text{$\im \, (\| V \|,m) \ap \Der f(x) = \Tan^m ( \| W \|,
		y)$ is an $m$ dimensional vector space},
	\end{equation*}
	$g$ is $(\| W \|, m)$ approximately differentiable at $y$, and
	\begin{equation*}
		( \| V \|, m ) \ap \Der (g \circ f ) (x) = ( \| W \|, m ) \ap
		\Der g (y) \circ ( \| V \|, m ) \ap \Der f (x).
	\end{equation*}
\end{theorem}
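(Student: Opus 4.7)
The plan is to combine Theorem \ref{thm:rectifiability} and its Remark \ref{remark:rectifiable-varifolds} with the area formula for Lipschitzian maps on rectifiable sets (see \cite[3.2.22]{MR41:1976}). First, using that $X$ is countably $( \| V \|, m )$ rectifiable and applying \ref{thm:rectifiability}, I would choose a countable disjoint family of compact $m$ rectifiable sets $R_i \subset X$ with $\| V \| ( X \without \bigcup_i R_i ) = 0$ and such that, for $\mathscr H^m$ almost all $a \in R_i$ with $\boldsymbol \Uptheta^m ( \| V \|, a ) > 0$, one has
\[
    \Tan^m ( \| V \|, a ) = \Tan^m ( \mathscr H^m \restrict R_i, a ), \quad ( \| V \|, m ) \ap \Der f (a) = ( \mathscr H^m \restrict R_i, m ) \ap \Der f (a);
\]
moreover, $\| V \| = \mathscr H^m \restrict \boldsymbol \Uptheta^m ( \| V \|, \cdot )$ by \cite[2.8.18, 2.9.13]{MR41:1976}.

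Second, I would introduce the measure $\phi = f_\# ( \| V \| \restrict ( \| V \|, m ) \ap \Jac_m f )$ on $Y$; its Borel regularity and local finiteness follow from properness of $f | \spt \| V \|$ via \ref{lemma:push_forward_borel_regular} and \ref{remark:push_forward_borel_regular}. The area formula applied to $f | R_i$ then yields that each $f [ R_i ]$ is $m$ rectifiable, that $f | R_i$ is countable to one $\mathscr H^m$ almost everywhere on $\{ a \with ( \| V \|, m ) \ap \Jac_m f(a) > 0 \}$, and, for every nonnegative Borel function $u$ on $Y$,
\[
    \int u ( f(a) ) \cdot ( \| V \|, m ) \ap \Jac_m f (a) \ud \| V \| \, a = \int_Y u(y) \sum_i \sum_{x \in R_i \cap f^{-1} [ \{ y \} ]} \boldsymbol \Uptheta^m ( \| V \|, x ) \ud \mathscr H^m \, y.
\]
In particular, $Y$ is countably $( \phi, m )$ rectifiable and $\boldsymbol \Uptheta^m ( \phi, \cdot )$ satisfies the density formula asserted in the theorem.

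A second application of \ref{thm:rectifiability}, now with $X$ replaced by $Y$ and $\| V \|$ replaced by $\phi$, then furnishes a member of $\mathbf{RV}_m ( Y )$ whose weight is $\phi$. To identify it with the $W$ defined in the statement, I would invoke \ref{remark:rectifiable-varifolds} on $f [ R_i ]$: for $\mathscr H^m$ almost all $y$ of positive $\phi$ density, one has $\Tan^m ( \phi, y ) = \Tan^m ( \mathscr H^m \restrict f [ R_i ], y )$; and the classical chain rule for the ordinary derivative of $f | R_i$ combined with the tangent plane identification in \ref{thm:rectifiability} identifies this tangent plane with $\im ( \| V \|, m ) \ap \Der f (x)$ at every $x \in R_i \cap f^{-1} [ \{ y \} ]$ of positive density, as required. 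The $( \| W \|, m )$ approximate differentiability of $g$ at $y$ and the chain rule for $g \circ f$ then follow by applying \ref{thm:rectifiability} once more to $g | f [ R_i ]$ and composing with the ordinary chain rule on $R_i$. The principal technical obstacle will be to keep track of $\mathscr H^m$ null sets through these successive applications of \ref{thm:rectifiability} and the area formula so that tangent planes, Jacobians, and multiplicities all line up consistently for every point of positive density.
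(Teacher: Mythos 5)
Your overall strategy---decompose the underlying set into countably many compact rectifiable pieces, apply the area formula piecewise to compute $f_\# ( \| V \| \restrict ( \| V \|, m ) \ap \Jac_m f )$ together with the multiplicity function, and then recover $W$ and its tangential structure from \ref{thm:rectifiability} and \ref{remark:rectifiable-varifolds}---is essentially the one used in the paper, which writes $V = \sum_{i=1}^\infty c_i V_i$ by means of \cite[3.5\,(1)]{MR3625810} and \cite[3.2.2]{MR41:1976} and pushes forward each summand. The weight and multiplicity part of your plan is sound.

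The gap lies in the tangent-plane identification and in the chain rule. Knowing only that $R_i$ is a compact $m$ rectifiable set to which \ref{thm:rectifiability} applies is not enough to conclude that $\Tan^m ( \mathscr H^m \restrict f [ R_i ], f(x) ) = \im \, ( \| V \|, m ) \ap \Der f (x)$ for $\mathscr H^m$ almost all $x \in R_i$ of positive density: the ``classical chain rule for the ordinary derivative of $f | R_i$'' is not a citable theorem at this level of generality, because $f | R_i$ need not be univalent, and $f [ R_i ]$ near $f(x)$ may receive contributions from several sheets of $R_i$ whose derivative images need not a priori agree. What is required---and what the paper's proof arranges explicitly in its special case---is a further refinement of the decomposition so that on each piece $K$ either $( \| V \|, m ) \ap \Jac_m f = 0$ holds $\mathscr H^m$ almost everywhere (such pieces contribute an $\mathscr H^m$ null set to the image and may be discarded for the pointwise conclusions), or $f | K$ is univalent with Lipschitzian inverse; only in the latter situation do \cite[3.2.17]{MR41:1976} and its companions yield $\Tan^m ( \mathscr H^m \restrict f [ K ], f(x) ) = \im \, ( \mathscr H^m \restrict K, m ) \ap \Der f (x)$ together with the chain rule for $g \circ f$. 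Once each bi-Lipschitzian piece is treated, \ref{remark:rectifiable-varifolds} applied to the \emph{images of the individual pieces} (not merely to $f [ R_i ]$ as a whole) shows that, for $\mathscr H^m$ almost all $y$, all sheets through $y$ of positive density share one tangent plane, which is what makes the asserted equality hold for \emph{every} preimage $x$ of positive density. Without this refinement, the pointwise conclusions of the theorem do not follow from the results you cite; with it, your argument becomes the paper's.
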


\begin{proof}
	As $\big \| \bigwedge_m L \big \| = \big \| \bigwedge_m ( L \circ
	T_\natural ) \big \|$ for $T \in \mathbf G (n,m)$ and $L \in \Hom ( T,
	\mathbf R^\nu)$, the legitimacy of the definition of $W$ and the
	equation for $\| W \|$ follows from \cite[4.5\,(1)\,(2)]{MR2898736},
	\ref{remark:radon_measure_restriction},
	\ref{lemma:push_forward_borel_regular}, and \cite[2.2.3,
	3.2.28\,(4)]{MR41:1976}.  To prove the remaining conclusions, we
	firstly consider the special case that $V$ is associated with a
	compact subset $K$ of $X$ which is contained in an $m$ dimensional
	submanifold of class $1$ of $\mathbf R^n$, and
	\begin{align*}
		& \text{either $( \| V \|, m ) \ap \Jac_m f(x) = 0$ for
		$\mathscr H^m$ almost all $x \in K$}, \\
		& \text{or $f|K$ is univalent and $(f|K)^{-1}$ is
		Lipschitzian}.
	\end{align*}
	If the second alternative holds, then, noting that $( \| V \|, m ) \ap
	\Jac_mf(x) > 0$ for $\mathscr H^m$ almost all $x \in K$ by
	\cite[2.10.11, 3.2.20]{MR41:1976}, we infer from
	\cite[3.2.17]{MR41:1976} in conjunction with \cite[2.10.19\,(4),
	3.1.6, 3.1.19\,(4)]{MR41:1976} that
	\begin{gather*}
		\Tan^m ( \mathscr H^m \restrict f[K], f(x)) = \im \, (
		\mathscr H^m \restrict K, m ) \ap \Der f (x) \in \mathbf
		G(n,m), \\
		\begin{aligned}
			& ( \mathscr H^m \restrict K, m ) \ap \Der ( g \circ
			f) (x) \\
			& \qquad = ( \mathscr H^m \restrict f[K], m ) \ap \Der
			g (f(x)) \circ ( \mathscr H^m \restrict K, m ) \ap
			\Der f (x)
		\end{aligned}
	\end{gather*}
	for $\mathscr H^m$ almost all $x \in K$.  Therefore, for both
	alternatives, $W$ is the rectifiable varifold associated with $f[K]$
	by \cite[3.2.20]{MR41:1976}.  As the characteristic functions of $K$
	and $f[K]$ are $\mathscr H^m$ almost equal to $\boldsymbol \Uptheta^m
	( \| V \|, \cdot )$ and $\boldsymbol \Uptheta^m ( \| W \|, \cdot )$,
	respectively, by \cite[2.10.19\,(4), 3.2.19]{MR41:1976}, the
	conclusion in the special case now follows by
	\cite[2.10.11]{MR41:1976} and \cite[3.5\,(1b)]{MR0307015}.  In the
	general case, we use \cite[3.5\,(1)]{MR3625810}, \cite[2.2.3,
	3.2.2]{MR41:1976}, and \ref{remark:rectifiable-varifolds} to express
	$V = \sum_{i=1}^\infty c_i V_i$ for some $0 < c_i < \infty$ and some
	varifolds $V_i$ satisfying the conditions of the special case, whence
	we infer the conclusion by \cite[3.5\,(2)]{MR0307015} in conjunction
	with \cite[2.10.11]{MR41:1976}, \cite[3.5\,(1b)]{MR0307015}, and
	\ref{remark:rectifiable-varifolds}.
\end{proof}

\begin{remark} \label{remark:push-forward-rectifiable-varifolds}
	If $f$ is of class $\infty$, then $W = f_\# V$ as defined in
	\cite[3.2]{MR0307015}; thus, we extend the definition of $f_\# V$ to
	encompass the presently considered maps $f$.
\end{remark}

\begin{remark} \label{remark:functorial_varifold_push}
	If additionally $\mu \geq m$, $Z$ is an open subset $\mathbf R^\mu$,
	$g : Y \to Z$ is locally Lipschitzian, and $g \circ f | \spt \| V \|$
	is proper as map into $Z$, then
	\begin{equation*}
		( g \circ f )_\# V = g_\# ( f_\# V ) \in \mathbf{RV}_m ( Z).
	\end{equation*}
	Allowing for the case $m > \nu$, we leave $f_\# V$ undefined but we
	are still assured that $( g \circ f)_\# V = 0$ because $\mathscr H^m (
	\im f ) = 0$ implies $\mathscr H^m ( \im ( g \circ f ) ) = 0$.
\end{remark}

\begin{corollary} \label{corollary:push-forward-restricted-rect-var}
	If additionally $u$ is $\| V \| \restrict ( \| V \|, m ) \ap \Jac_m f$
	integrable, then
	\begin{equation*}
		\big ( \| V \| \restrict ( \| V \|, m ) \ap \Jac_m f ) \big )
		(u) = {\textstyle\int \sum_{M \cap f^{-1} [ \{ y \} ] }
		\boldsymbol \Uptheta^m ( \| V \|, \cdot ) u \ud \mathscr H^m
		\, y},
	\end{equation*}
	where $M = \{ x \with \boldsymbol \Uptheta^m ( \| V \|, x) > 0 \}$.
\end{corollary}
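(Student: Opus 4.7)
The plan is to reduce the stated identity to the classical area formula \cite[3.2.22]{MR41:1976} by first rewriting the measure on the left intrinsically in terms of $\mathscr H^m$ using the rectifiable structure of $V$. Abbreviate $j = ( \| V \|, m ) \ap \Jac_m f$. Because $V \in \mathbf{RV}_m ( X )$, we have $\| V \| = \mathscr H^m \restrict \boldsymbol \Uptheta^m ( \| V \|, \cdot )$ by \cite[2.8\,(5)]{MR0307015}, and the density vanishes on $X \without M$; hence
\begin{equation*}
	\bigl( \| V \| \restrict j \bigr) (u) = {\textstyle \int_M} u \, \boldsymbol \Uptheta^m ( \| V \|, \cdot ) \, j \ud \mathscr H^m.
\end{equation*}

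Next, I would exhaust $M$ up to an $\mathscr H^m$ null set by a countable disjoint sequence of compact $m$ rectifiable subsets $R_i$ of $X$; such a sequence is available from countable $( \| V \|, m )$ rectifiability combined with local finiteness of $\| V \|$. For each $i$, \ref{thm:rectifiability} applied with $R = R_i$, together with \ref{remark:rectifiable-varifolds}, shows that for $\mathscr H^m$ almost all $x \in R_i \cap M$ the approximate differentials $( \| V \|, m ) \ap \Der f (x)$ and $( \mathscr H^m \restrict R_i, m ) \ap \Der f (x)$ coincide, and hence so do the corresponding approximate Jacobians. Since $f | R_i$ is Lipschitzian by compactness of $R_i$ and local Lipschitz continuity of $f$, the area formula \cite[3.2.22]{MR41:1976} applied to the function $x \mapsto u(x) \, \boldsymbol \Uptheta^m ( \| V \|, x)$ on $R_i \cap M$ yields
\begin{equation*}
	{\textstyle \int_{R_i \cap M}} u \, \boldsymbol \Uptheta^m ( \| V \|, \cdot ) \, j \ud \mathscr H^m = {\textstyle \int \sum_{x \in R_i \cap M \cap f^{-1} [\{y\}]}} u(x) \, \boldsymbol \Uptheta^m ( \| V \|, x ) \ud \mathscr H^m \, y.
\end{equation*}
Summing in $i$ and invoking monotone (or dominated) convergence on both sides gives the identity, because at $\mathscr H^m$ almost every $y$ every $x \in M \cap f^{-1}[\{y\}]$ lies in some $R_i$.

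The main obstacle is the identification of the two approximate Jacobians on each rectifiable piece $R_i$, so that the $( \| V \|, m )$ version appearing in the statement can be replaced by the rectifiable-set version needed by \cite[3.2.22]{MR41:1976}; this is precisely what \ref{thm:rectifiability} and \ref{remark:rectifiable-varifolds} were set up to provide. The integrability required to justify the Fubini-type step underlying the area formula is ensured by the $\| V \| \restrict j$ integrability hypothesis on $u$, which via the first display transfers to $\mathscr H^m \restrict M$ integrability of $u \, \boldsymbol \Uptheta^m ( \| V \|, \cdot ) \, j$.
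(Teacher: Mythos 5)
Your argument is correct, but it is not the route the paper takes. The paper proves the corollary as a genuine corollary of \ref{thm:push-forward-rectifiable-varifolds}: for $u$ the characteristic function of a $\| V \|$ measurable set $B$, it applies that theorem to the restricted varifold $V \restrict B \times \mathbf G (n,m)$, so that the multiplicity identity $\boldsymbol \Uptheta^m ( \| W \|, y ) = \sum_{x \in f^{-1} [ \{ y \} ]} \boldsymbol \Uptheta^m ( \| V \restrict B \|, x )$ together with $\| W \| = f_\# \big ( \| V \| \restrict B \restrict ( \| V \|, m ) \ap \Jac_m f \big )$ and the density formula $\boldsymbol \Uptheta^m ( \| V \| \restrict B, x ) = \boldsymbol \Uptheta^m ( \| V \|, x ) u(x)$ yields exactly the asserted equation for that $u$; the general case then follows by the standard approximation of integrable functions by simple functions. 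You instead bypass the theorem entirely: you rewrite the left side via $\| V \| = \mathscr H^m \restrict \boldsymbol \Uptheta^m ( \| V \|, \cdot )$, decompose $M$ into compact rectifiable pieces $R_i$, use \ref{thm:rectifiability} and \ref{remark:rectifiable-varifolds} to identify the two approximate Jacobians on each piece, and invoke the classical area formula piecewise. This is sound (the only points needing care, which you handle implicitly, are that $\mathscr H^m ( M \without \bigcup_i R_i ) = 0$ forces $\mathscr H^m ( f [ M \without \bigcup_i R_i ] ) = 0$, so the fibre sums are not affected, and that general integrable $u$ is reached by splitting into positive and negative parts). What your route buys is independence from the properness of $f | \spt \| V \|$ and from the push-forward varifold $W$ altogether; what it costs is that you essentially re-perform the decomposition into rectifiable pieces that the proof of \ref{thm:push-forward-rectifiable-varifolds} has already carried out, so in the paper's economy the shorter deduction via restricted varifolds is preferable.
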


\begin{proof}
	If $u$ is the characteristic function of a $\| V \|$ measurable set
	$B$ over $X$, then
	\begin{equation*}
		\boldsymbol \Uptheta^m ( \| V \| \restrict B, x ) =
		\boldsymbol \Uptheta^m ( \| V \|,x ) u (x) \quad \text{for
		$\mathscr H^m$ almost all $x \in X$}
	\end{equation*}
	by \cite[2.8.18, 2.9.11]{MR41:1976} and \cite[3.5\,(1b)]{MR0307015};
	whence, recalling \cite[2.10.11]{MR41:1976} and
	\ref{remark:rectifiable-varifolds}, we infer the conclusion from
	\ref{thm:push-forward-rectifiable-varifolds} with $V$ replaced by $V
	\restrict B \times \mathbf G(n,m)$.  Considering the subcase $A=B$,
	the assertion extends to $\| V \| \restrict ( \| V \|, m ) \ap \Jac_m
	f$ measurable sets $B$.  Therefore, the general case follows by means
	of the usual approximation procedure, see \cite[2.1.1\,(6)\,(10),
	2.3.3, 2.4.4\,(6), 2.4.8]{MR41:1976}.
\end{proof}

\begin{corollary} \label{corollary:push-forward-rectifiable-varifolds}
	Suppose additionally $G$ is a complete separable normed commutative
	group, $v$ is a $G$ valued $\| V \| \restrict ( \| V \|, m ) \ap
	\Jac_m f$ measurable function,
	\begin{equation*}
		| v(x) | \leq \boldsymbol \Uptheta^m ( \| V \|, x ) \quad
		\text{for $\| V \| \restrict ( \| V \|, m ) \ap \Jac_m f$
		almost all $x$},
	\end{equation*}
	and $M = \{ x \with \boldsymbol \Uptheta^m ( \| V \|, x ) > 0 \}$.
	
	Then, an $\mathscr H^m$ measurable function $\xi$ may be defined by
	\begin{equation*}
		\xi(y) = \sum_{x \in M \cap f^{-1} [ \{ y \} ]} v(x) \in G
		\quad \text{whenever $y \in Y$},
	\end{equation*}
	where $\mathscr H^m$ refers to the $m$ dimensional Hausdorff measure
	over $Y$.
\end{corollary}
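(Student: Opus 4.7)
The plan is, first, to verify that the unordered sum defining $\xi(y)$ converges in $G$ for $\mathscr{H}^m$ almost every $y$, and, second, to exhibit $\xi$ (modulo modification on an $\mathscr{H}^m$ null set) as a pointwise absolutely convergent countable sum of $\mathscr{H}^m$ measurable $G$ valued functions arising from a bi-Lipschitzian decomposition of $f\,|\,M$.

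For convergence I would combine the pointwise bound $|v(x)| \leq \boldsymbol{\Uptheta}^m(\|V\|, x)$ with the cardinality identity from \ref{thm:push-forward-rectifiable-varifolds} to infer, for $\mathscr{H}^m$ almost every $y \in Y$,
\[
\sum_{x \in M \cap f^{-1}[\{y\}]} |v(x)| \leq \sum_{x \in f^{-1}[\{y\}]} \boldsymbol{\Uptheta}^m(\|V\|, x) = \boldsymbol{\Uptheta}^m(\|W\|, y) < \infty,
\]
where $W$ is the rectifiable varifold furnished by \ref{thm:push-forward-rectifiable-varifolds}. The summation convention recalled after \ref{example:torus-subgroup} then makes $\xi(y)$ unambiguously defined on a set of full $\mathscr{H}^m$ measure in $Y$; outside this set one may set $\xi = 0$.

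For measurability, I would appeal to a Lusin-type decomposition along the lines of \cite[3.2.18, 3.2.22]{MR41:1976}, adapted to the rectifiable structure of $\|V\|$ supplied by \ref{thm:rectifiability}, to partition $M \cap A$, up to a $\|V\|$ null Borel set $N$, as a disjoint sequence of Borel sets $K_j$ on which $f$ is univalent with Lipschitzian inverse $\psi_j : f[K_j] \to K_j$. Since $(\|V\|, m)\ap\Jac_m f = 0$ on $M \without A$, the area formula for countably rectifiable sets ensures that $f[(M \without A) \cup N]$ is $\mathscr{H}^m$ null; hence
\[
\xi(y) = \sum_{j \,:\, y \in f[K_j]} v(\psi_j(y)) \quad \text{for $\mathscr{H}^m$ almost every $y \in Y$}.
\]
Each summand, extended by $0$ outside $f[K_j]$, is $\mathscr{H}^m$ measurable since $\psi_j$ is bi-Lipschitzian and $\|V\| \restrict K_j$ is mutually absolutely continuous with $\mathscr{H}^m \restrict K_j$ via the positive density $\boldsymbol{\Uptheta}^m(\|V\|, \cdot)$; absolute convergence of the partial sums in the complete group $G$ then yields $\mathscr{H}^m$ measurability of $\xi$.

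The main obstacle I expect is the construction of the decomposition $(K_j)_j$ in such a way that the omitted remainder $(M \without A) \cup N$ disappears under $f$ at the level of $\mathscr{H}^m$; this requires combining a Lusin-type approximation of the approximate differential $(\|V\|, m)\ap\Der f$ on a rectifiable base with a covering argument for the locally Lipschitzian map $f$. Once this bi-Lipschitzian decomposition is in place, the remaining verifications reduce to routine measure-theoretic bookkeeping.
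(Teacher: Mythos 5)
Your argument is correct, but it takes a genuinely different route from the paper's for the measurability step. The paper reduces to the special case that $v$ is a Borel function with \emph{finite image} bounded by $\boldsymbol \Uptheta^m ( \| V \|, \cdot )$; for such $v$ the sum $\xi(y)$ is a finite combination of counting functions $N ( f | v^{-1} [ \{ g \} ] \cap M, y ) $ with coefficients $g \in G$, and these are $\mathscr H^m$ measurable by \cite[3.2.20]{MR41:1976}. The general case then follows by approximating $v$ pointwise $\| V \| \restrict ( \| V \|, m ) \ap \Jac_m f$ almost everywhere by such finite-image functions $w_i$ and passing to the limit in the sums, using---exactly as you do---that $\mathscr H^m \big ( f [ M \cap \{ x \with ( \| V \|, m ) \ap \Jac_m f (x) = 0 \} ] \big ) = 0$ and the domination $|w_i| \leq \boldsymbol \Uptheta^m ( \| V \|, \cdot )$ to control the tails. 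You instead decompose the \emph{domain}: you split $M \cap A$ into Borel pieces $K_j$ on which $f$ is univalent with Lipschitzian inverse and write $\xi$ as an absolutely convergent series of the measurable functions $v \circ \psi_j$. This is legitimate, and the decomposition you flag as the main obstacle is in fact already available in the paper: the proof of \ref{thm:push-forward-rectifiable-varifolds} expresses $V = \sum_{i=1}^\infty c_i V_i$ with each $V_i$ carried by a compact set on which either the approximate Jacobian vanishes $\mathscr H^m$ almost everywhere or $f$ restricts to a map with Lipschitzian inverse, via \cite[3.5\,(1)]{MR3625810} and \cite[2.2.3, 3.2.2]{MR41:1976}, so no fresh Lusin-type construction is needed. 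What your route buys is that the convergence of the unordered sum (via $\sum_{x \in f^{-1} [ \{ y \} ]} \boldsymbol \Uptheta^m ( \| V \|, x ) = \boldsymbol \Uptheta^m ( \| W \|, y ) < \infty$) and the measurability are handled by one and the same decomposition; what the paper's route buys is that it avoids composing measurable functions with bi-Lipschitzian inverses (and the attendant care about measurability of the images $f [ K_j ]$, which in your setup should be arranged to be $\sigma$-compact up to $\mathscr H^m$ null sets), relying only on the measurability of multiplicity functions.
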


\begin{proof}
	We recall \cite[2.10.11]{MR41:1976} and \cite[3.5\,(1b)]{MR0307015}.
	As the set $M$ is countably $( \mathscr H^m, m )$ rectifiable, the
	function $N(f|B \cap M, \cdot)$ is $\mathscr H^m$ measurable whenever
	$B$ is a Borel subset of $X$ by \cite[3.2.20]{MR41:1976}.  This
	implies the conclusion in the special case that $v : M \to G$ is a
	Borel function with finite image and $|v(x)| \leq \boldsymbol
	\Uptheta^m ( \| V \|, x )$ for $x \in M$.  In the general case, there
	exists a sequence $w_1, w_2, w_3, \ldots$ of functions satisfying the
	conditions of the special case and
	\begin{equation*}
		\lim_{i \to \infty} w_i(x)=v(x) \quad \text{for $\| V \|
		\restrict ( \| V \|, m ) \ap \Jac_m f$ almost all $x$}.
	\end{equation*}
	Since $\mathscr H^m \big ( f [ M \cap \{ x \with ( \| V \|, m ) \ap
	\Jac_m f(x) = 0 \} ] \big ) = 0$, the conclusion follows.
\end{proof}

\begin{miniremark} \label{miniremark:kernel}
	If $T \in \mathbf G (n,m)$ is associated with the simple $m$ vector
	$\zeta \in \bigwedge_m \mathbf R^n$ and $h \in \Hom (T, \mathbf
	R^\kappa )$ satisfies $\bigwedge_\kappa h \neq 0$, then, cf.\
	\cite[4.3.8\,(3)]{MR41:1976}, $\ker h$ is associated with
	\begin{equation*}
		{\textstyle \zeta \restrict \bigwedge^\kappa ( h \circ
		T_\natural ) ( \omega ) \quad \text{whenever $0 \neq \omega
		\in \bigwedge^\kappa \mathbf R^\kappa$}.}
	\end{equation*}
\end{miniremark}

\begin{theorem} \label{thm:coarea-rectifiable-varifolds}
	Suppose $X$ is an open subset of $\mathbf R^n$, $f : X \to \mathbf
	R^\kappa$ is locally Lipschitzian, $m$ is an integer, $\kappa \leq m
	\leq n$, $V \in \mathbf{RV}_m ( X )$, the prefix $\ap$ denotes $( \| V
	\|, m)$ approximate differentiation, and $\ap \Jac_\kappa f = \big \|
	\bigwedge_\kappa \ap \Der f \big \|$.  Moreover, suppose $W$ is a
	function such that $y \in \dmn W$ if and only if $y \in \mathbf
	R^\kappa$ and
	\begin{equation*}
		\big ( \mathscr H^{m-\kappa} \restrict f^{-1} [ \{ y \} ]
		\big) \restrict \boldsymbol \Uptheta^m ( \| V \|, \cdot )
	\end{equation*}
	is the weight of some $Z \in \mathbf{RV}_{m-\kappa} ( X )$ and such
	that in this case $W(y) = Z$.
	
	Then, the following two statements hold.
	\begin{enumerate}
		\item \label{item:coarea-rectifiable-varifolds:structure} The
		function $W$ is $\mathscr L^\kappa$ measurable and, for
		$\mathscr L^\kappa$ almost all $y$, we have
		\begin{equation*}
			\Tan^{m-\kappa} ( \| W(y) \|, x ) = \ker \ap \Der f(x)
			\quad \text{for $\| W(y) \|$ almost all $x$}.
		\end{equation*}
		\item \label{item:coarea-rectifiable-varifolds:int} If $g$ is
		$\| V \| \restrict \ap \Jac_\kappa f$ integrable, then
		\begin{equation*}
			{\textstyle\int g \ud ( \| V \| \restrict \ap
			\Jac_\kappa f ) = \iint g \ud \| W(y) \| \ud \mathscr
			L^\kappa \, y}.
		\end{equation*}
	\end{enumerate}
\end{theorem}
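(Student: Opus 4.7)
The plan is to apply the classical coarea formula \cite[3.2.22]{MR41:1976} on a countable exhaustion of $\|V\|$ by compact $m$ rectifiable sets, and then to assemble the resulting fibre measures into rectifiable $(m-\kappa)$ varifolds by means of \ref{thm:rectifiability}. Since $V \in \mathbf{RV}_m ( X )$, using \cite[2.8\,(5)]{MR0307015}, \cite[3.5\,(1)]{MR3625810}, and \cite[3.2.2]{MR41:1976}, we write $\|V\|$ almost all of $X$ as a disjoint union of Borel sets $R_i$, each a compact $m$ rectifiable subset of an $m$ dimensional submanifold of $\mathbf R^n$ of class $1$ on which $\boldsymbol \Uptheta^m ( \|V\|, \cdot )$ is positive and bounded; by \ref{thm:rectifiability}, $( \|V\|, m ) \ap \Der f (x)$ agrees with $( \mathscr H^m \restrict R_i, m ) \ap \Der f(x)$ for $\mathscr H^m$ almost every $x \in R_i$, so $\ker \ap \Der f$ is well defined $\|V\|$ almost everywhere.

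The classical coarea formula \cite[3.2.22]{MR41:1976} applied to each $f|R_i$, combined with \cite[3.2.19]{MR41:1976} and \ref{miniremark:kernel}, yields, for $\mathscr L^\kappa$ almost every $y$, that $R_i \cap f^{-1}[\{y\}]$ is $(m-\kappa)$ rectifiable with approximate tangent $\ker \ap \Der f(x)$ at $\mathscr H^{m-\kappa}$ almost every $x$, and delivers the identity
\begin{equation*}
	{\textstyle \int_{R_i} g \, \boldsymbol \Uptheta^m ( \|V\|, \cdot ) \ap \Jac_\kappa f \ud \mathscr H^m = \int \int_{R_i \cap f^{-1}[\{y\}]} g \, \boldsymbol \Uptheta^m ( \|V\|, \cdot ) \ud \mathscr H^{m-\kappa} \ud \mathscr L^\kappa y}
\end{equation*}
for non-negative Borel $g$. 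Setting $\mu_y = ( \mathscr H^{m-\kappa} \restrict f^{-1}[\{y\}] ) \restrict \boldsymbol \Uptheta^m ( \|V\|, \cdot )$ and summing over $i$ yields \ref{item:coarea-rectifiable-varifolds:int} for such $g$; extension to $\|V\| \restrict \ap \Jac_\kappa f$ integrable $g$ proceeds through the standard approximation of \cite[2.3.3, 2.4.8]{MR41:1976}.

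For $\mathscr L^\kappa$ almost every $y$, the measure $\mu_y$ is supported on the countably $(m-\kappa)$ rectifiable set $\bigcup_i R_i \cap f^{-1}[\{y\}]$ and, by \cite[2.10.19\,(1)\,(3)]{MR41:1976}, satisfies $\boldsymbol \Uptheta^{\ast (m-\kappa)} ( \mu_y, x ) < \infty$ at $\mu_y$ almost every $x$; hence \ref{thm:rectifiability}, with $(\phi, m)$ replaced by $(\mu_y, m-\kappa)$, furnishes $W(y) \in \mathbf{RV}_{m-\kappa}(X)$ whose tangent planes coincide $\|W(y)\|$ almost everywhere with $\ker \ap \Der f$ by \ref{remark:rectifiable-varifolds} and \cite[2.10.19\,(4)]{MR41:1976}, establishing \ref{item:coarea-rectifiable-varifolds:structure} apart from measurability of $W$. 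The latter---interpreted as measurability of $y \mapsto W(y)(k)$ for every $k \in \mathscr K ( X \times \mathbf G(n,m-\kappa))$---follows from \ref{item:coarea-rectifiable-varifolds:int} applied to $g(x) = k(x, \ker \ap \Der f(x))$, since $W(y)(k) = \int k(x, \ker \ap \Der f(x)) \ud \mu_y(x)$.

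The principal obstacle lies in the tangent identification: classical coarea produces the tangent $\ker ( \mathscr H^m \restrict R_i, m ) \ap \Der f$ at fibre points, whereas the assertion concerns $\ker ( \|V\|, m ) \ap \Der f$; reconciling these notions of approximate differentiation, and verifying that weighting by the density $\boldsymbol \Uptheta^m ( \|V\|, \cdot )$ does not disrupt tangential behaviour on the fibre, requires careful use of \ref{thm:rectifiability} in conjunction with \ref{remark:rectifiable-varifolds} and \cite[2.10.19\,(4)]{MR41:1976}.
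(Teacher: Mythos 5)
Your overall strategy---first proving the integral identity with the explicitly given fibre measures $\mu_y$ in place of $\| W(y) \|$, then upgrading $\mu_y$ to the weight of a rectifiable varifold via \ref{thm:rectifiability}, and finally identifying the fibre tangent planes and deducing measurability of $W$---is the same as the paper's; substituting \cite[3.2.22]{MR41:1976} applied to a decomposition into compact pieces for the varifold coarea formula of \cite[3.5\,(2)]{MR3625810} is harmless. One small point you pass over when ``summing over $i$'': the set where $\boldsymbol \Uptheta^m ( \| V \|, \cdot ) > 0$ but which lies outside $\bigcup_i R_i$ is merely $\mathscr H^m$ null, and to conclude that it is $\mathscr H^{m-\kappa}$ null on almost every fibre---so that $\mu_y$ really is exhausted by the sets $R_i \cap f^{-1} [ \{ y \} ]$---you need the Eilenberg inequality \cite[2.10.25]{MR41:1976}, which the paper cites for exactly this purpose.

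The genuine gap is the tangent identification in \eqref{item:coarea-rectifiable-varifolds:structure}, which you correctly single out as ``the principal obstacle'' but do not close. You assert that \cite[3.2.22]{MR41:1976}, combined with \cite[3.2.19]{MR41:1976} and \ref{miniremark:kernel}, yields that $R_i \cap f^{-1} [ \{ y \} ]$ has approximate tangent plane $\ker \ap \Der f (x)$ at $\mathscr H^{m-\kappa}$ almost every point. But the conclusions of \cite[3.2.22]{MR41:1976} are the rectifiability of almost every fibre and the integral formula; the identification of the approximate tangent planes of the fibres is not among them, and \cite[3.2.19]{MR41:1976}, \ref{miniremark:kernel}, \ref{thm:rectifiability}, \ref{remark:rectifiable-varifolds}, and \cite[2.10.19\,(4)]{MR41:1976} only serve to compare approximate differentials and tangent planes taken with respect to different carrying measures at common points of positive density, and to orient the kernel; none of them shows that the level set of the merely Lipschitzian map $f$ is, near $\mu_y$ almost every point, tangent to $\ker \ap \Der f(x)$. (Note also that your measurability argument for $W$ presupposes this identification.) What is needed is a reduction to the case that $f$ is of class $1$: by the Lusin-type approximation \cite[11.1]{MR3528825} one replaces $f$ by a map of class $1$ off a set of small $\| V \|$ measure, controls the trace of the exceptional set on almost every fibre by the coarea inequality, and then invokes the level-set structure theorem for maps of class $1$ on rectifiable sets, \cite[3.5\,(1)]{MR3625810}; this is precisely how the paper concludes the proof of \eqref{item:coarea-rectifiable-varifolds:structure}.
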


\begin{proof}
	Denoting the statement that results from replacing $\| W (y) \|$ in
	\eqref{item:coarea-rectifiable-varifolds:int} by
	\begin{equation*}
		\phi(y) = \big ( \mathscr H^{m-\kappa} \restrict f^{-1} [ \{ y
		\} ] \big ) \restrict \boldsymbol \Uptheta^m ( \| V \|, \cdot)
	\end{equation*}
	by \eqref{item:coarea-rectifiable-varifolds:int}$'$ and recalling
	\cite[2.10.25]{MR41:1976} and \cite[3.5\,(1b)]{MR0307015}, we readily
	infer \eqref{item:coarea-rectifiable-varifolds:int}$'$ from
	\cite[3.5\,(2)]{MR3625810}, whence, in conjunction with
	\cite[2.10.19\,(3), 3.2.22\,(2)]{MR41:1976}, we infer that $\phi(y) =
	\| W(y) \|$ for $\mathscr L^\kappa$ almost all $y$ by
	\ref{thm:rectifiability}.  Using \cite[3.2.25,
	3.2.28\,(2)\,(4)]{MR41:1976} and \ref{miniremark:kernel}, we deduce
	that the function mapping $a \in A$ onto
	\begin{equation*}
		\ker \ap \Der f (a) \in \mathbf G (n,m-\kappa)
	\end{equation*}
	is $\| V \| \restrict A$ measurable from
	\cite[4.5\,(1)\,(2)]{MR2898736}, where $A = \{ a \with \ap \Jac_\kappa
	f (a) > 0 \}$.  In view of \cite[2.23]{MR3528825} and
	\eqref{item:coarea-rectifiable-varifolds:int}$'$, we may therefore
	define an $\mathscr L^\kappa$ measurable $\mathbf V_m ( X )$ valued
	function $Z$ such that, for $\mathscr L^\kappa$ almost all $y$,
	\begin{equation*}
		Z(y)(k) = {\textstyle\int} k(x,\ker \ap \Der f(x)) \ud \phi
		(y) \, x \quad \text{whenever $k \in \mathscr K ( X \times
		\mathbf G (n,m-\kappa) )$}
	\end{equation*}
	and it remains to show that $Z(y)$ is rectifiable for $\mathscr
	L^\kappa$ almost all $y$.  Recalling
	\ref{remark:rectifiable-varifolds}, this follows from
	\cite[3.5\,(1)]{MR3625810} in the special case that $f$ is of class
	$1$ to which the general case may be reduced by means of
	\cite[11.1]{MR3528825}.
\end{proof}

\begin{remark} \label{remark:coarea-rectifiable-varifolds}
	If $C$ is an $\mathscr L^\kappa$ Vitali relation, then, for $\mathscr
	L^\kappa$ almost all $y$,
	\begin{equation*}
		W(y)(k) = (C) \lim_{S \to y} \mathscr L^\kappa (S)^{-1}
		{\textstyle\int_{f^{-1} [S]}} k ( x, \ker \ap \Der f(x) ) \ud
		( \| V \| \restrict \ap \Jac_\kappa f ) \, x
	\end{equation*}
	whenever $k \in \mathscr K ( U \times \mathbf G (n,m-\kappa))$ by
	\cite[2.9.8]{MR41:1976} in conjunction with
	\cite[3.5\,(1b)]{MR0307015} and \cite[2.23]{MR3528825}.  Thus, in case
	$\kappa = 1$ and $\| \updelta V \|$ is a Radon measure, we recall
	\cite[2.8.17]{MR41:1976} and \cite[8.5, 8.7, 8.29, 12.1]{MR3528825} to
	similarly conclude
	\begin{equation*}
		\| V \boundary \{ x \with f(x)>y \} \| = \big ( \mathscr
		H^{m-1} \restrict f^{-1} [ \{ y \} ] \big ) \restrict
		\boldsymbol \Uptheta^m ( \| V \|, \cdot ) \quad \text{for
		$\mathscr L^1$ almost all $y$}.
	\end{equation*}
\end{remark}

\begin{remark} \label{remark:comparison-coarea}
	For $X = \mathbf R^n$, the statement of
	\ref{thm:coarea-rectifiable-varifolds} is analogous to those for flat
	chains with positive densities in \cite[4.3.2\,(2),
	4.3.8\,(2)\,(3)]{MR41:1976} with $U = \mathbf R^n$.
\end{remark}

\begin{theorem} \label{thm:product-rect-var}
	Suppose $X$ and $Y$ are open subsets of $\mathbf R^n$ and $\mathbf
	R^\nu$, respectively, $m$ and $\mu$ are nonnegative integers, $m \leq
	n$, $\mu \leq \nu$, $\phi$ and $\chi$ are the weights of some members
	of $\mathbf{RV}_m ( X )$ and $\mathbf{RV}_\mu ( Y )$, respectively,
	and
	\begin{equation*}
		M = \{ x \with \boldsymbol \Uptheta^m ( \phi, x ) > 0 \},
		\quad N = \{ y \with \boldsymbol \Uptheta^\mu ( \chi, y ) > 0
		\}.
	\end{equation*}
	
	Then, there holds
	\begin{equation*}
		( \mathscr H^m \restrict M ) \times ( \mathscr H^\mu \restrict
		N ) = \mathscr H^{m+\mu} \restrict ( M \times N )
	\end{equation*}
	and $M \times N$ is $\mathscr H^{m+\mu}$ almost equal to $\{ z \with
	\boldsymbol \Uptheta^{m+\mu} ( \phi \times \chi, z ) > 0 \}$.
\end{theorem}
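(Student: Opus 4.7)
\emph{Setup.} By \ref{thm:rectifiability}, the set $M$ is countably $( \mathscr H^m, m )$ rectifiable with locally finite $\mathscr H^m$ measure, a tangent plane $\Tan^m ( \phi, x )$ exists for $\phi$ almost all $x$, and --- as noted within the proof of \ref{thm:rectifiability} via \cite[2.8\,(5)]{MR0307015} --- one has $\phi = \mathscr H^m \restrict \boldsymbol \Uptheta^m ( \phi, \cdot )$; in particular $\phi$ is concentrated on $M$. Symmetric assertions hold for $\chi$ and $N$. My plan is to establish the product identity on $M \times N$ first by hand, and then to feed the product measure $\phi \times \chi$ back into \ref{thm:rectifiability} to pin down the set of positive $( m + \mu )$ dimensional density.

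\emph{First equation.} Up to $\mathscr H^m$ null sets, I decompose $M = \bigsqcup_i M_i$ into countably many compact subsets $M_i$, each contained in an $m$ dimensional submanifold $\Sigma_i$ of class $1$ of $\mathbf R^n$, and analogously $N = \bigsqcup_j N_j$ with $N_j \subset \Sigma_j'$; this uses the characterisation of rectifiability via \cite[3.2.29]{MR41:1976}. Each product $\Sigma_i \times \Sigma_j'$ is an $( m + \mu )$ dimensional submanifold of class $1$ of $\mathbf R^{n+\nu}$ whose induced Riemannian metric is the product of the two factor metrics; consequently its surface measure is the tensor product of the two factor surface measures, and because for class $1$ submanifolds Hausdorff measure coincides with surface measure, I obtain
\begin{equation*}
	\mathscr H^{m+\mu} \restrict ( M_i \times N_j ) = ( \mathscr H^m \restrict M_i ) \times ( \mathscr H^\mu \restrict N_j ).
\end{equation*}
Summing over $(i,j)$ yields the first identity, the excluded null sets contributing zero on both sides by the same Fubini principle.

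\emph{Density identification.} Since $\phi, \chi$ are concentrated on $M, N$, the Radon measure $\phi \times \chi$ on $X \times Y$ is concentrated on $M \times N$, which is itself countably $( \mathscr H^{m+\mu}, m+\mu )$ rectifiable by the decomposition above. For $\phi$ almost every $x$ and $\chi$ almost every $y$, the densities $\boldsymbol \Uptheta^m ( \phi, x )$ and $\boldsymbol \Uptheta^\mu ( \chi, y )$ exist, are finite and positive, and tangent planes are available; combining these with the inclusions $\mathbf B ( x, r/\sqrt 2) \times \mathbf B ( y, r/\sqrt 2 ) \subset \mathbf B ( (x,y), r ) \subset \mathbf B ( x, r ) \times \mathbf B ( y, r )$ produces
\begin{equation*}
	\boldsymbol \Uptheta^{m+\mu} ( \phi \times \chi, (x,y) ) = \boldsymbol \Uptheta^m ( \phi, x ) \cdot \boldsymbol \Uptheta^\mu ( \chi, y ) \in ( 0, \infty )
\end{equation*}
for $( \phi \times \chi )$ almost every $( x, y )$. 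In particular $\boldsymbol \Uptheta^{\ast ( m + \mu )} ( \phi \times \chi, \cdot )$ is finite almost everywhere, so \ref{thm:rectifiability} applies to $\phi \times \chi$ on $X \times Y$ and delivers $\phi \times \chi = \mathscr H^{m+\mu} \restrict \boldsymbol \Uptheta^{m+\mu} ( \phi \times \chi, \cdot )$. Setting $E = \{ z \with \boldsymbol \Uptheta^{m+\mu} ( \phi \times \chi, z ) > 0 \}$, the displayed identity forces $\mathscr H^{m+\mu} ( ( M \times N ) \without E ) = 0$, whereas concentration of $\phi \times \chi$ on $M \times N$ combined with the first equation and $\phi \times \chi = \mathscr H^{m+\mu} \restrict \boldsymbol \Uptheta^{m+\mu} ( \phi \times \chi, \cdot )$ forces $\mathscr H^{m+\mu} ( E \without ( M \times N ) ) = 0$.

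\emph{Main obstacle.} The technical core is the product identity for Hausdorff measures on $M \times N$: since $\mathscr H^m \times \mathscr H^\mu = \mathscr H^{m+\mu}$ can fail in general, one really has to descend to explicit class $1$ parameterisations of the rectifiable pieces and invoke the coincidence of Hausdorff measure with the Riemannian surface measure on class $1$ submanifolds, rather than expect a metric-level Fubini argument to work directly.
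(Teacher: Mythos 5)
Your overall strategy is sound but genuinely different from the paper's. The paper imports both analytic facts---the representation $\phi\times\chi=\mathscr H^{m+\mu}\restrict\boldsymbol\Uptheta^{m+\mu}(\phi\times\chi,\cdot)$ and the density product formula---directly from \cite[3.6\,(1)\,(2)]{MR3625810}, reads off the almost equality of $M\times N$ with $\{z\with\boldsymbol\Uptheta^{m+\mu}(\phi\times\chi,z)>0\}$ using the lower density estimate of \cite[2.6.2\,(2)]{MR41:1976}, and only then obtains the first equation as a formal consequence of $\big(\phi\restrict\boldsymbol\Uptheta^m(\phi,\cdot)^{-1}\big)\times\big(\chi\restrict\boldsymbol\Uptheta^\mu(\chi,\cdot)^{-1}\big)=(\phi\times\chi)\restrict\boldsymbol\Uptheta^{m+\mu}(\phi\times\chi,\cdot)^{-1}$ via \cite[2.6.2\,(4)]{MR41:1976} and \ref{remark:restriction}. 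You reverse the logic: you prove the Hausdorff-measure identity first by decomposing $M$ and $N$ into compact pieces of class $1$ submanifolds and then recover the density statement. That is a viable, more self-contained route, and your closing remark correctly identifies local finiteness as what excludes the pathology of \ref{remark:product-rect-var}. Two points in the first step still need to be written out: that $A\times N$ is $\mathscr H^{m+\mu}$ null whenever $\mathscr H^m(A)=0$ and $N$ is $\sigma$-finite and rectifiable (this is what ``the same Fubini principle'' must mean), and that agreement of the two $\sigma$-finite Borel regular outer measures on the sets $M_i\times N_j$ upgrades to agreement on all subsets of $X\times Y$.

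The step that does not work as written is the density identification. The inclusions $\mathbf B(x,r/\sqrt2)\times\mathbf B(y,r/\sqrt2)\subset\mathbf B((x,y),r)\subset\mathbf B(x,r)\times\mathbf B(y,r)$ only give
\begin{equation*}
	2^{-(m+\mu)/2}\,c\,\boldsymbol\Uptheta^m(\phi,x)\,\boldsymbol\Uptheta^\mu(\chi,y)\leq\boldsymbol\Uptheta_\ast^{m+\mu}(\phi\times\chi,(x,y)),\quad\boldsymbol\Uptheta^{\ast(m+\mu)}(\phi\times\chi,(x,y))\leq c\,\boldsymbol\Uptheta^m(\phi,x)\,\boldsymbol\Uptheta^\mu(\chi,y)
\end{equation*}
with $c=\boldsymbol\alpha(m)\boldsymbol\alpha(\mu)/\boldsymbol\alpha(m+\mu)\neq 1$; the two bounds do not match, so the sandwich proves positivity and finiteness of the density but not its exact value. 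The exact constant has to come from rectifiability, not from ball inclusions. The cleanest repair inside your own scheme is to use the first equation, which you have already established: by \cite[2.6.2\,(4)]{MR41:1976} one gets $\phi\times\chi=\big(\mathscr H^{m+\mu}\restrict(M\times N)\big)\restrict\boldsymbol\Uptheta^m(\phi,\cdot)\boldsymbol\Uptheta^\mu(\chi,\cdot)$, and then $\boldsymbol\Uptheta^{m+\mu}\big(\mathscr H^{m+\mu}\restrict(M\times N),z\big)=1$ for $\mathscr H^{m+\mu}$ almost all $z\in M\times N$ by \cite[3.2.19]{MR41:1976}, combined with Lebesgue differentiation, yields the product formula; alternatively one blows up at a point where both factors possess approximate tangent planes. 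With that step repaired, your application of \ref{thm:rectifiability} to $\phi\times\chi$ and the two null-set assertions at the end go through.
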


\begin{proof}
	We let $Z = \{ z \with \boldsymbol \Uptheta^{m+\mu} ( \phi \times
	\chi, z ) > 0 \}$ and recall \cite[2.4.10]{MR41:1976} and
	\cite[3.5\,(1b)]{MR0307015}.  By \cite[3.6\,(1)\,(2)]{MR3625810}, we
	have $\phi \times \chi = \mathscr H^{m+\mu} \restrict \boldsymbol
	\Uptheta^{m+\mu} ( \phi \times \chi, \cdot )$ and
	\begin{equation*}
		\boldsymbol \Uptheta^{m+\mu} ( \phi \times \chi, (x,y)) =
		\boldsymbol \Uptheta^m (\phi,x) \boldsymbol \Uptheta^\mu (
		\chi, y) \quad \text{for $\phi \times \chi$ almost all
		$(x,y)$};
	\end{equation*}
	in particular, $\mathscr H^{m+\mu} ( Z \without ( M \times N)) = 0$
	and, since $\boldsymbol \Uptheta_\ast^{m+\mu} ( \phi \times \chi,
	(x,y)) > 0$ for $(x,y) \in M \times N$ by
	\cite[2.6.2\,(2)]{MR41:1976}, also $\mathscr H^{m+\mu} ( ( M \times N)
	\without Z ) = 0$.  Finally,
	\begin{gather*}
		\big ( \phi \restrict \boldsymbol \Uptheta^m ( \phi,
		\cdot)^{-1} \big ) \times \big ( \chi \restrict \boldsymbol
		\Uptheta^\mu ( \chi, \cdot )^{-1} \big ) = \big ( ( \phi
		\times \chi ) \restrict \boldsymbol \Uptheta^{m+\mu} ( \phi
		\times \chi, \cdot )^{-1} \big )
	\end{gather*}
	may be verified by means of \cite[2.6.2\,(4)]{MR41:1976} and
	\ref{remark:restriction}. 
\end{proof}

\begin{remark} \label{remark:product-rect-var}
	By \cite[3.2.24]{MR41:1976}, for general Borel subsets $M$ of $\mathbf
	R^n$ and $N$ of $\mathbf R^\nu$ which are countably $(\mathscr
	H^m,m)$, respectively $(\mathscr H^\mu,\mu)$, rectifiable, it may
	happen that
	\begin{equation*}
		( \mathscr H^m \restrict M ) \times ( \mathscr H^\mu \restrict
		N ) \neq \mathscr H^{m+\mu} \restrict ( M \times N).
	\end{equation*}
\end{remark}

\section{Rectifiable chains}

\begin{miniremark} \label{miniremark:normed-group-bundle}
	Suppose $m$ and $n$ are nonnegative integers, $n \geq 1$, $G$ is a
	normed commutative group, the function $\beta : \bigwedge_m \mathbf
	R^n \to \bigodot_2 \bigwedge_m \mathbf R^n$ satisfies $\beta ( \zeta )
	= \zeta \odot \zeta/2$ for $\zeta \in \bigwedge_m \mathbf R^n$, and
	the Grassmann manifolds $\mathbf G_{\textup{o}} (n,m)$ and $\mathbf G
	(n,m)$ are canonically isometrically identified with subsets of
	$\bigwedge_m \mathbf R^n$ and $\bigodot_2 \bigwedge_m \mathbf R^n$,
	respectively, as in \cite[3.2.28\,(1)\,(4)]{MR41:1976}.  Abbreviating
	$\alpha = \beta |\mathbf G_{\textup o} (n,m)$, we recall from
	\cite[3.2.28\,(3)\,(4)]{MR41:1976} that $\alpha$ is an open map from
	$\mathbf G_{\textup o} (n,m)$ onto $\mathbf G (n,m)$ and
	\begin{equation*}
		|\alpha ( \zeta ) - \alpha ( \zeta' ) |^2 = |\zeta-\zeta'|^2 (
		1 + \zeta \bullet \zeta' )/2 \quad \text{for $\zeta, \zeta'
		\in \mathbf G_{\textup o} (n,m)$};
	\end{equation*}
	hence, $\Lip \alpha = 1$ if $m \leq n$, $\Lip \alpha = 0$ if $m > n$,
	and $\Lip \big ((\alpha|U)^{-1} \big) < \infty$ whenever $U \subset
	\mathbf G_{\textup o} (n,m)$ with $\diam U < 2$.
	
	We endow the set $B = \mathbf G_{\textup o} (n,m) \times G$ with the
	metric $R$ such that
	\begin{equation*}
		R \big ( (\zeta,g),(\zeta',g') \big ) = |\zeta-\zeta'|+|g-g'|
		\quad \text{for $(\zeta,g),(\zeta',g') \in B$}
	\end{equation*}
	and study the quotient $\Gamma$ of $B$ induced by the action of the
	two-element subgroup $\{ \mathbf 1_B, - \mathbf 1_B \}$ of isometries
	of $B$.  Taking $p : B \to \Gamma$ to be the canonical projection of
	$B$ onto $\Gamma$, we define maps $P : \Gamma \to \mathbf G(n,m)$, $N
	: \Gamma \to \mathbf R$, and
	\begin{equation*}
		+ : ( \Gamma \times \Gamma ) \cap \{ (\gamma,\gamma') \with P
		(\gamma) = P (\gamma') \} \to \Gamma
	\end{equation*}
	by requiring that, whenever $(\zeta,g),(\zeta,g') \in B$, we have
	\begin{equation*}
		P ( p ( \zeta,g) ) = \alpha ( \zeta ), \quad N (p (\zeta,g)) =
		|g|, \quad \text{and} \quad p ( \zeta, g ) + p ( \zeta, g' ) =
		p ( \zeta, g+g').
	\end{equation*}
	For $T \in \mathbf G(n,m)$, the set $H = P^{-1} [ \{ T \} ]$ endowed
	with addition $+|(H \times H)$ and group norm $N|H$ forms a normed
	commutative group. Next, noting that $\inf R [ \gamma \times \gamma' ]
	= \dist (b,\gamma')$ for $b \in \gamma$, we define a metric $\rho :
	\Gamma \times \Gamma \to \mathbf R$ on $\Gamma$ by
	\begin{equation*}
		\rho (\gamma,\gamma') = \inf R [ \gamma \times \gamma' ] \quad
		\text{for $\gamma,\gamma' \in \Gamma$}.
	\end{equation*}
	Clearly, we have $\Lip p \leq 1$, the metric $\rho$ induces the
	quotient topology on $\Gamma$, and $p$ is an open map.  Moreover,
	whenever $f$ maps $\Gamma$ into another metric space, there holds
	$\Lip f = \Lip (f \circ p)$, whence we infer
	\begin{equation*}
		\text{$\Lip p = 1 = \Lip P$ if $m \leq n$}, \quad \text{$\Lip
		p = 0 = \Lip P$ if $m > n$}.
	\end{equation*}
	Whenever $(\zeta,g),(\zeta',g') \in B$, we notice that
	\begin{align*}
		\rho \big ( p (\zeta,g), p(\zeta',g')\big) & = \inf \{ |
		\zeta-\zeta'| + |g-g'|, |\zeta+\zeta'|+|g+g'| \} \\
		& \geq \inf \{ |\zeta-\zeta'|+|g-g'|, 2 - | \zeta - \zeta' |
		\}.
	\end{align*}
	Thus, for $U \subset \mathbf G_{\textup o} (n,m)$ with $\diam U < 2$,
	the map $(p|(U\times G))^{-1}$, whose domain equals $P^{-1} \big [
	\alpha [U] \big ]$, is locally Lipschitzian, whence we infer that the
	Lipschitzian map
	\begin{equation*}
		\text{$\psi_U = p \circ \big ( ( \alpha|U )^{-1} \times
		\mathbf 1_G \big )$ from $\alpha [U] \times G$ onto $P^{-1}
		\big [ \alpha [U] \big ]$}
	\end{equation*}
	possesses a locally Lipschitzian inverse; moreover, we have
	\begin{equation*}
		P ( \psi_U (T,g)) = T, \quad ( N \circ \psi_U ) (T,g) = |g|,
		\quad \psi_U (T,g)+\psi_U (T,g') = \psi_U (T,g+g')
	\end{equation*}
	for $T \in \alpha [U]$ and $g,g' \in G$.  Therefore, the maps $N$ and 
	\begin{equation*}
		+ : ( \Gamma \times \Gamma ) \cap \{ (\gamma,\gamma') \with P
		(\gamma) = P (\gamma') \} \to \Gamma
	\end{equation*}
	are locally Lipschitzian; also, if $G$ is complete, so are the domains
	of these maps.
	
	Whenever $\sigma$ is a $\Gamma$ valued function, we let $|\sigma| = N
	\circ \sigma$.  If also $\tau$ is a $\Gamma$ valued function,
	$\sigma+\tau$ shall be the function with domain
	\begin{equation*}
		( \dmn \sigma ) \cap ( \dmn \tau ) \cap \{ x \with
		P(\sigma(x)) = P (\tau(x)) \}
	\end{equation*}
	and value $\sigma(x)+\tau(x)$ at $x$ in its domain.  Henceforward, we
	will denote the metric space $\Gamma$ by $\mathbf G(n,m,G)$, $P$ by
	$\boldsymbol \uppi_{\mathbf G(n,m,G)}$, and the function $N$ by $|
	\cdot |$.
\end{miniremark}

\begin{miniremark} \label{miniremark:mini-product}
	Suppose $m$ and $n$ are nonnegative integers, $n \geq 1$, $G$ is a
	complete normed commutative group, and, for such $G$, the map $p_G :
	\mathbf G_{\textup o} (n,m) \times G \to \mathbf G(n,m,G)$ denotes the
	quotient map and the product $\mathbf G(n,m,\mathbf Z) \times G$ is
	endowed with the metric whose value at $((\delta,g),(\delta',g')) \in
	( \mathbf G (n,m,\mathbf Z) \times G)^2$ equals
	\begin{equation*}
		\rho (\delta,\delta') + |g-g'|, \quad \text{where $\rho$ is
		the metric on $\mathbf G(n,m,\mathbf Z)$}.
	\end{equation*}
	Then, we define a map $\lambda : \mathbf G (n,m,\mathbf Z ) \times G
	\to \mathbf G (n,m,G)$ by requiring
	\begin{equation*}
		\lambda ( p_{\mathbf Z} (\zeta,d),g) = p_G ( \zeta, d \cdot g)
		\quad \text{for $\zeta \in \mathbf G_{\textup o}(n,m)$, $d \in
		\mathbf Z$, and $g \in G$}.
	\end{equation*}
	Employing the maps $\psi_U$ of \ref{miniremark:normed-group-bundle}
	with $G$ replaced by $\mathbf Z$, we verify that $\lambda$ is locally
	Lipschitzian and that $\lambda | \big ( \boldsymbol \uppi_{\mathbf
	G(n,m,\mathbf Z)}^{-1} [ \{ T \} ] \times G \big ) \to \boldsymbol
	\uppi_{\mathbf G(n,m,G)}^{-1} [ \{ T \} ]$ is bilinear for $T \in
	\mathbf G (n,m)$.  Henceforward, we will denote $\lambda (\delta,g)$
	by $\delta \cdot g$ and, whenever $\sigma$ is a $\mathbf G(n,m,\mathbf
	Z)$ valued function and $g \in G$, we will designate by $\sigma \cdot
	g$ the function with the same domain as $\sigma$ and value $\sigma (x)
	\cdot g$ at $x \in \dmn \sigma$.  We also note that
	\begin{equation*}
		| \delta \cdot g | \leq | \delta | | g | \quad \text{for
		$\delta \in \mathbf G (n,m, \mathbf Z)$ and $g \in G$ with
		equality if $| \delta | \leq 1$}.
	\end{equation*}
\end{miniremark}

\begin{miniremark} \label{miniremark:G-push-and-restriction}
	Suppose $m$ is a nonnegative integer, $n$ is a positive integer, $G$
	is a normed commutative group, and $Y$ is a normed space.  Then, we
	endow
	\begin{equation*}
		\mathbf G(n,m,G,Y) = \{ (\gamma,h) \with \gamma \in \mathbf
		G(n,m,G), h \in \Hom ( \boldsymbol \uppi_{\mathbf G(n,m,G)}
		(\gamma), Y ) \}
	\end{equation*}
	with a metric whose value at $((\gamma,h),(\gamma',h')) \in \mathbf
	G(n,m,G,Y)^2$ equals
	\begin{equation*}
		\rho (\gamma,\gamma') + \| h \circ ( \dmn h )_\natural - h'
		\circ ( \dmn h')_\natural \|,
	\end{equation*}
	where $\rho$ is the metric on $\mathbf G(n,m,G)$.  If $G$ and $Y$ are
	complete, so is $\mathbf G(n,m,G,Y)$.
	
	Whenever $\nu$ is a positive integer, we employ the quotient maps
	\begin{equation*}
		p : \mathbf G_{\textup o} (n,m) \times G \to \mathbf G(n,m,G)
		\quad \text{and} \quad q : \mathbf G_{\textup o} (\nu,m)
		\times G \to \mathbf G(\nu,m,G)
	\end{equation*}
	to define the map
	\begin{equation*}
		P : \mathbf G(n,m,G,\mathbf R^\nu) \cap \big \{ (\gamma,h)
		\with {\textstyle \bigwedge_m h \neq 0} \big \} \to \mathbf
		G(\nu,m,G)
	\end{equation*}
	so that, whenever $T \in \mathbf G(n,m)$, $g \in G$, $h \in \Hom ( T,
	\mathbf R^\nu )$, $\bigwedge_m h \neq 0$, and $T$ is associated with
	$\zeta \in \mathbf G_{\textup o} (n,m)$, we have
	\begin{equation*}
		P ( p ( \zeta,g), h ) = q \left ( \frac{\bigwedge_m ( h \circ
		( \dmn h)_\natural ) ( \zeta )}{\big |\bigwedge_m ( h \circ (
		\dmn h)_\natural ) ( \zeta ) \big |}, g \right ).
	\end{equation*}
	Clearly, $\dmn P$ is an open subset of $\mathbf G(n,m,G,\mathbf
	R^\nu)$.  Employing the maps $\psi_U$ of
	\ref{miniremark:normed-group-bundle}, we readily verify that $P$ is
	locally Lipschitzian.  Henceforth, we will denote $P ( \gamma,h)$ by
	$h_\# \gamma$.  We also note that, whenever $(\gamma,h),(\gamma',h)
	\in \mathbf G(n,m,G, \mathbf R^\nu )$, $\bigwedge_m h \neq 0$, and
	$\boldsymbol \uppi_{\mathbf G(n,m,G)} (\gamma) = \boldsymbol
	\uppi_{\mathbf G(n,m,G)} (\gamma')$, we have
	\begin{gather*}
		| h_\# \gamma | = |\gamma|, \quad \boldsymbol \uppi_{\mathbf
		G( \nu,m,G) } ( h_\# \gamma ) = \im h, \quad h_\#
		(\gamma+\gamma') = h_\# \gamma + h_\# \gamma', \\
		( i \circ h )_\# \gamma = i_\# ( h_\# \gamma ) \quad
		\text{whenever $i \in \Hom ( \im h, \mathbf R^\mu )$ and
		${\textstyle\bigwedge_m i} \neq 0$},
	\end{gather*}
	where $\mu$ is a positive integer,
	\begin{equation*}
		h_\# ( \delta \cdot g ) = ( h_\# \delta ) \cdot g \quad
		\text{whenever $(\delta,h) \in \mathbf G (n,m, \mathbf Z,
		\mathbf R^\nu)$, ${\textstyle \bigwedge_m h \neq 0}$, and $g
		\in G$}.
	\end{equation*}
	
	Similarly, whenever $\kappa$ is a positive integer and $\kappa \leq
	m$, we employ the quotient maps $p : \mathbf G_{\textup o} (n,m)
	\times G \to \mathbf G (n,m,G)$ and $r : \mathbf G_{\textup o}
	(n,m-\kappa) \times G \to \mathbf G (n,m-\kappa,G)$ and recall
	\ref{miniremark:kernel} to define
	\begin{equation*}
		Q : \mathbf G (n,m,G, \mathbf R^\kappa) \cap \big \{
		(\gamma,h) \with \textstyle{\bigwedge^\kappa} h \neq 0 \big \}
		\to \mathbf G (n,m-\kappa, G )
	\end{equation*}
	so that, whenever $T \in \mathbf G (n,m)$, $g \in G$, $h \in \Hom ( T,
	\mathbf R^\kappa )$, $\bigwedge^\kappa h \neq 0$, and $T$ is
	associated with $\zeta \in \mathbf G_{\textup o} (n,m)$, we have
	\begin{equation*}
		Q ( p(\zeta,g), h ) = r \left ( \frac{ \zeta \restrict
		{\textstyle\bigwedge^\kappa} ( h \circ ( \dmn h)_\natural ) (
		\omega )}{ \big | \zeta \restrict {\textstyle\bigwedge^\kappa}
		( h \circ ( \dmn h)_\natural ) ( \omega ) \big |}, g \right )
		{\textstyle \quad \text{whenever $0 \neq \omega \in
		\bigwedge^\kappa \mathbf R^\kappa$}.}
	\end{equation*}
	Clearly, $\dmn Q$ is an open subset of $\mathbf G (n,m,G,\mathbf
	R^\kappa)$.  Employing the maps $\psi_U$ of
	\ref{miniremark:normed-group-bundle}, we readily verify that $Q$ is
	locally Lipschitzian.  Henceforth, we will denote $Q(\gamma,h)$ by
	$\gamma \restrict h$.  We finally note that, whenever
	$(\gamma,h),(\gamma',h) \in \mathbf G(n,m,G,\mathbf R^\kappa )$,
	$\bigwedge^\kappa h \neq 0$, and $\boldsymbol \uppi_{\mathbf G(n,m,G)}
	( \gamma ) = \boldsymbol \uppi_{\mathbf G(n,m,G)} (\gamma')$, we have
	\begin{equation*}
		|\gamma \restrict h| = |\gamma|, \quad  \boldsymbol
		\uppi_{\mathbf G(n,m-\kappa,G)} ( \gamma \restrict h ) = \ker
		h, \quad (\gamma+\gamma') \restrict h = \gamma \restrict h +
		\gamma' \restrict h
	\end{equation*}
	and that, whenever $(\delta,h) \in \mathbf G(n,m,\mathbf Z, \mathbf
	R^\kappa)$, $\bigwedge^\kappa h \neq 0$, and $g \in G$, we have
	\begin{equation*}
		( \delta \cdot g ) \restrict h = ( \delta \restrict h ) \cdot
		g.
	\end{equation*}
\end{miniremark}

\begin{miniremark} \label{miniremark:product-grassmann-bundle}
	Suppose $m$ and $\mu$ are nonnegative integers, $n$ and $\nu$ are
	positive integers,
	\begin{equation*}
		p : \mathbf G_{\textup o} ( n,m ) \times \mathbf Z \to \mathbf
		G(n,m, \mathbf Z ), \quad q : \mathbf G_{\textup o} ( \nu,
		\mu) \times G \to \mathbf G ( \nu, \mu, G ),
	\end{equation*}
	and $r : \mathbf G_{\textup o} ( n+\nu, m+\mu ) \times G \to \mathbf G
	( n+\nu, m+\mu, G )$ are the quotient maps,
	\begin{gather*}
		P : \mathbf R^n \to \mathbf R^n \times \mathbf R^\nu, \quad Q
		: \mathbf R^\nu \to \mathbf R^n \times \mathbf R^\nu, \\
		P (x) = (x,0) \quad \text{and} \quad Q(y) = (0,y) \quad
		\text{for $(x,y) \in \mathbf R^n \times \mathbf R^\nu$},
	\end{gather*}
	and $\mathbf R^n \times \mathbf R^\nu \simeq \mathbf R^{n+\nu}$.
	Then, we define a map
	\begin{equation*}
		\kappa : \mathbf G (n,m,\mathbf Z) \times \mathbf G(\nu,\mu,
		G) \to \mathbf G ( n+\nu, m+\mu, G )
	\end{equation*}
	so that, whenever $\zeta \in \mathbf G_{\textup o} ( n,m )$, $\eta \in
	\mathbf G_{\textup o} ( \nu, \mu )$, $d \in \mathbf Z$, and $g \in G$,
	we have
	\begin{equation*}
		\kappa ( p ( \zeta, d ), q ( \eta, g ) ) = r \big (
		{\textstyle \bigwedge_m} P ( \zeta ) \wedge
		{\textstyle\bigwedge_\mu} Q ( \eta ), d \cdot g \big ).
	\end{equation*}
	Employing the maps $\psi_U$ of \ref{miniremark:normed-group-bundle}
	with $(n,m,G)$ replaced by $(n,m, \mathbf Z)$ and $(\nu,\mu,G)$,
	respectively, we readily verify that $\kappa$ is locally Lipschitzian
	and that
	\begin{equation*}
		\kappa \big | \big ( \boldsymbol \uppi_{\mathbf G (n,m,
		\mathbf Z)}^{-1} [ \{ S \} ] \times \boldsymbol \uppi_{\mathbf
		G ( \nu, \mu, G)}^{-1} [ \{ T \} ] \big ) \to \boldsymbol
		\uppi_{\mathbf G ( n+\nu, m+\mu, G )}^{-1} [ \{ S \times T \}
		]
	\end{equation*}
	is bilinear whenever $S \in \mathbf G(n,m)$ and $T \in \mathbf
	G(\nu,\mu)$.  Henceforward, we will denote $\kappa ( \delta,\gamma )$
	by $\delta \times \gamma$ and, whenever $\sigma$ and $\tau$ are
	$\mathbf G(n,m,\mathbf Z)$ and $\mathbf G(\nu,\mu,G)$ valued
	functions, respectively, we will designate by $\sigma \times \tau$ the
	function with domain $\dmn \sigma \times \dmn \tau$ and value
	$\sigma(x) \times \tau (y)$ at $(x,y) \in \dmn \sigma \times \dmn
	\tau$.  We finally note	
	\begin{gather*}
		\text{$| \delta \times \gamma | \leq | \delta | | \gamma |$
		with equality if $G = \mathbf Z$ or $G = \mathbf R$ or
		$|\delta| \leq 1$}, \\
		\delta \times ( \delta' \cdot g ) = ( \delta \times \delta' )
		\cdot g
	\end{gather*}
	whenever $\delta \in \mathbf G (n,m, \mathbf Z )$, $\delta' \in
	\mathbf G ( \nu, \mu, \mathbf Z )$, $\gamma \in \mathbf G(\nu,\mu,G)$,
	and $g \in G$.  $\big($Defining $\mathbf G(0,0,G) \simeq G$ and
	allowing for $\nu = 0$, the $\cdot$ operation of
	\ref{miniremark:mini-product} could be considered a special case of
	the present $\times$ operation with $\mu = \nu = 0$.$\big)$
\end{miniremark}

\begin{miniremark} \label{miniremark:def-rectifiable-G-chains}
	Suppose $m$ is a nonnegative integer, $n$ is a positive integer, $U$
	is an open subset of $\mathbf R^n$, $\mathscr H^m$ refers to the $m$
	dimensional Hausdorff measure over $U$, and $G$ is a complete normed
	commutative group.  Then, we let $L(U,m,G)$ denote the set of all
	$\mathbf G (n,m,G)$ valued functions $\sigma$ such that the following
	conditions are satisfied: $\dmn \sigma \subset U$, $M = \{ x \with |
	\sigma(x) | > 0 \}$ is $\mathscr H^m$ measurable, $\sigma$ is
	$\mathscr H^m \restrict M$ measurable, for some separable subset $Z$
	of $\mathbf G(n,m,G)$, we have $\sigma(x) \in Z$ for $\mathscr H^m$
	almost all $x \in M$,
	\begin{equation*}
		{\textstyle\int_{K \cap M}} | \sigma | \ud \mathscr H^m <
		\infty \quad \text{whenever $K$ is a compact subset of $U$},
	\end{equation*}
	$U$ is countably $( \phi,m )$ rectifiable, and $\Tan^m ( \phi, x ) =
	\boldsymbol \uppi_{\mathbf G(n,m,G)} ( \sigma (x) )$ for $\phi$ almost
	all $x$, where we abbreviated $\phi = ( \mathscr H^m \restrict M )
	\restrict | \sigma |$; hence, $\sigma$ is a $\phi$ measurable function
	and $\phi$ is the weight of some member of $\mathbf{RV}_m ( U )$ by
	\ref{remark:restriction}, \cite[2.2.3]{MR41:1976}, and
	\ref{remark:rectifiable-varifolds}.  Elements $\sigma$ and $\tau$ of
	$L(U,m,G)$ are termed \emph{equivalent} if and only if the functions
	$\sigma | \{ x \with |\sigma(x)|>0 \}$ and $\tau | \{ x \with |
	\tau(x)|>0 \}$ are $\mathscr H^m$ almost equal; the resulting set of
	equivalence classes is denoted by
	\begin{equation*}
		\mathscr R_m^{\mathrm{loc}} ( U, G )
	\end{equation*}
	and its members are called $m$ dimensional \emph{locally rectifiable
	$G$ chains} in $U$.  For $S \in \mathscr R_m^{\mathrm{loc}} ( U, G )$,
	we denote by $\| S \|$ the Radon measure over $U$, which is equal to
	$( \mathscr H^m \restrict \{ x \with | \sigma(x)|>0 \}) \restrict |
	\sigma |$ for $\sigma \in S$, and we employ \cite[2.8.18,
	2.9.13]{MR41:1976} to define the function
	\begin{equation*}
		\vect S
	\end{equation*}
	to be the member of $S$ characterised by requiring that, for $\sigma
	\in S$ and $a \in U$, we have $a \in \dmn \vect S$ if and only if
	\begin{equation*}
		\boldsymbol \Uptheta^m ( \| S \|, a ) > 0 \quad \text{and}
		\quad ( \| S \|, V ) \aplim_{x \to a} \sigma (x) \in \mathbf G
		(n,m,G)
	\end{equation*}
	and in this case $\vect S (a)$ equals that approximate limit, where
	$V$ is the $\| S \|$ Vitali relation given by $V = \{ (x, \mathbf
	B(x,r)) \with x \in U, 0 < r < \dist (x, \mathbf R^n \without U) \}$.

	In case $m > n$, we have $L(U,m,G) = \{ \varnothing \}$, hence
	$\mathscr R_m^{\textup{loc}} (U,G)$ contains a single element, and, if
	$S \in \mathscr R_m^{\textup{loc}} ( U, G )$, then $\| S \| = 0$ and
	$\vect S = \varnothing$.
	
	The considerations of this paragraph rely on
	\ref{remark:rectifiable-varifolds} and \cite[3.5\,(1b)]{MR0307015}. We
	have $\| S \| = \mathscr H^m \restrict \boldsymbol \Uptheta^m ( \| S
	\|, \cdot )$ for $S \in \mathscr R_m^{\textup{loc}} (U, G )$.  For
	$\sigma \in L (U,m,G)$, we say that \emph{$\sigma$ is adapted to
	$\phi$} if and only if $\phi$ is the weight of some member of
	$\mathbf{RV}_m ( U)$ if $m \leq n$, $\phi$ is the zero measure over
	$U$ if $m > n$, the domain of $\sigma$ is $\mathscr H^m$ almost equal
	to $\{ x \with \boldsymbol \Uptheta^m ( \phi, x ) > 0 \}$, and
	\begin{equation*}
		\boldsymbol \uppi_{\mathbf G (n,m,G)} ( \sigma(x)) = \Tan^m (
		\phi, x) \quad \text{for $\phi$ almost all $x$};
	\end{equation*}
	for instance, $\vect S$ is adapted to $\| S \|$ for $S \in \mathscr
	R_m^{\textup{loc}} ( U, G)$.  If $S \in \mathscr R_m^{\mathrm{loc}} (
	U, G)$ and $A$ is $\| S \|$ measurable, then we define $S \restrict A
	\in \mathscr R_m^{\mathrm{loc}} ( U, G)$ by requiring $\sigma | A \in
	S \restrict A$ whenever $\sigma \in S$; hence, $\| S \restrict A \| =
	\| S \| \restrict A$.  Next, we define the sum
	\begin{equation*}
		S+T \in \mathscr R_m^{\mathrm{loc}} ( U, G )
	\end{equation*}
	of $S$ and $T$ in $\mathscr R_m^{\mathrm{loc}} ( U, G )$ by requiring
	\begin{equation*}
		\rho = \big ( \vect S + \vect T \big ) \cup \big ( \vect S |
		(U \without \dmn \vect T ) \big ) \cup \big ( \vect T | ( U
		\without \dmn \vect S ) \big ) \in S+T.
	\end{equation*}
	We infer that $\| S + T \| = ( \| S \| + \| T \| ) \restrict
	|\rho|/\Theta \leq \| S \| + \| T \|$, where we abbreviated $\Theta =
	\boldsymbol \Uptheta^m ( \| S \|, \cdot ) + \boldsymbol \Uptheta^m (
	\| T \|, \cdot)$, and note that $\sigma + \tau$ belongs to $S + T$ and
	is adapted to $\phi$ whenever $\sigma$ in $S$ and $\tau$ in $T$ are
	both adapted to $\phi$; for instance, to $\phi = \| S \| + \| T \|$.
	It follows that
	\begin{equation*}
		(S+T) \restrict A = S \restrict A + T \restrict A \quad
		\text{whenever $A$ is $\| S \| + \| T \|$ measurable}
	\end{equation*}
	and that $\mathscr R_m^{\mathrm{loc}} ( U, G )$ is a commutative group
	which is a complete topological group when endowed with the group norm
	with value
	\begin{equation*}
		\sum_{i=1}^\infty \inf \{ 2^{-i}, \| S \| (K_i) \} \quad
		\text{at $S \in \mathscr R_m^{\textup{loc}} ( U, G)$},
	\end{equation*}
	where $K_i = U \cap \{ x \with |x| \leq i, \dist (x, \mathbf R^n
	\without U ) \geq 1/i \}$; hence, if $A_1, A_2, A_3, \ldots$ is a
	disjoint sequence of $\| S \|$ measurable sets, then $S \restrict
	\bigcup_{i=1}^\infty A_i = \sum_{i=1}^\infty S \restrict A_i $.  We
	also let $\mathscr R_m ( U, G ) = \mathscr R_m^{\textup{loc}} ( U, G )
	\cap \{ S \with \text{$\spt \| S \|$ is compact} \}$.
	
	Finally, with respect to the group norm whose value at $S$ equals $\|
	S \| ( \mathbf R^n)$,
	\begin{equation*}
		\mathscr R_m^{\mathrm{loc}} ( \mathbf R^n, G ) \cap \{ S \with
		\| S \| ( \mathbf R^n ) < \infty \}
	\end{equation*}
	is a complete normed commutative group; its members correspond to the
	$m$ dimensional rectifiable $G$ chains of \cite[3.6]{MR2876138}.  We
	also notice, if $G$ is equal to a finite direct sum of cyclic groups
	with their standard group norm, then
	\begin{equation*}
		\mathscr R_m^{\mathrm{loc}} ( \mathbf R^n, G ) \cap \big \{ S
		\with \text{both $\| S \|$ and $(\vect S)_\# \| S \|$ have
		compact support} \big \}
	\end{equation*}
	is a subgroup thereof; if $m \leq n$, then its members correspond to
	the $G$ varifolds of dimension~$m$ in $\mathbf R^n$ as defined in
	\cite[2.4, 2.6\,(d)]{MR0225243}.
\end{miniremark}

\begin{miniremark} \label{miniremark:push-forward-G-chain}
	Suppose $m$ is a nonnegative integer, $n$ and $\nu$ are positive
	integers, $U$ and $V$ are open subsets of $\mathbf R^n$ and $\mathbf
	R^\nu$, respectively, $G$ is a complete normed commutative group, $S
	\in \mathscr R_m^{\textup{loc}} ( U, G )$, $f : U \to V$ is locally
	Lipschitzian, $f | \spt \| S \|$ is proper, $\chi = f_\# \big ( \| S
	\| \restrict \| \bigwedge_m (\| S \|, m) \ap \Der f \| \big )$, and $Y
	= \{ y \with \Tan^m ( \chi,y ) \in \mathbf G(\nu,m) \}$.
	
	Then, using \ref{remark:rectifiable-varifolds},
	\ref{thm:push-forward-rectifiable-varifolds},
	\ref{corollary:push-forward-rectifiable-varifolds}, and the maps
	$\psi_U$ of \ref{miniremark:normed-group-bundle} with $n$ replaced by
	$\nu$, we obtain an $\mathscr H^m \restrict Y$ measurable function
	$\tau$, defined on a subset of $Y$ by
	\begin{equation*}
		\tau (y) = \sum_{x \in ( \dmn \vect S) \cap f^{-1} [ \{ y \}
		]} \big ( ( \| S \|, m ) \ap \Der f (x) \big )_\# \big ( \vect
		S (x) \big ) \quad \text{whenever $y \in Y$},
	\end{equation*}
	where the summation is understood to be computed in the complete
	normed commutative group $\boldsymbol \uppi_{\mathbf G(\nu,m,G)}^{-1}
	[ \{ \Tan^m ( \chi, y ) \} ]$, we define $f_\# S$ in $\mathscr
	R_m^{\textup{loc}} ( V, G )$ by requiring that $\tau$ belongs to $f_\#
	S$ and we have
	\begin{equation*}
		\| f_\# S \| = \chi \restrict | \tau | / \boldsymbol
		\Uptheta^m ( \chi, \cdot ) \leq \chi, \quad \spt \| f_\# S \|
		\subset f [ \spt \| S \| ].
	\end{equation*}
	Applying \ref{corollary:push-forward-restricted-rect-var} to the
	characteristic function $u$ of $U \cap \{ x \with f(x) \neq g(x) \}$,
	we see that if also $g : U \to V$ is locally Lipschitzian, $g | \spt
	\| S \|$ is proper, and $g(x)=f(x)$ for $\| S \|$ almost all $x$, then
	$f_\# S = g_\# S$.  Employing \ref{remark:rectifiable-varifolds} and
	\ref{thm:push-forward-rectifiable-varifolds}, we verify that, if
	$\sigma$ in $S$ is adapted to $\phi$, $f | \spt \phi$ is proper, $\psi
	= f_\# ( \phi \restrict \| \bigwedge_m ( \phi,m ) \ap \Der f \| )$,
	and $\Upsilon = \{ \upsilon \with \Tan^m ( \psi, \upsilon ) \in
	\mathbf G ( \nu, m ) \}$, then $Y$ is $\mathscr H^m$ almost contained
	in $\Upsilon$ and the function $\rho$, defined on a subset of
	$\Upsilon$ by
	\begin{equation*}
		\rho (\upsilon) = \sum_{x \in (\dmn \sigma) \cap f^{-1} [ \{
		\upsilon \} ]} \big ( ( \phi, m ) \ap \Der f (x) \big )_\#
		\big ( \sigma (x) \big ) \quad \text{whenever $\upsilon \in
		\Upsilon$},
	\end{equation*}
	where the sum is computed in $\boldsymbol \uppi_{\mathbf
	G(\nu,m,G)}^{-1} [ \{ \Tan^m ( \psi, \upsilon ) \} ]$, belongs to
	$f_\# S$ and is adapted to $\psi$; in particular, $\tau$ is adapted to
	$\chi$.  Therefore, we firstly obtain
	\begin{equation*}
		( f_\# S ) \restrict B = f_\# ( S \restrict f^{-1} [ B ] )
		\quad \text{whenever $B$ is $f_\# \| S \|$ measurable},
	\end{equation*}
	as in this case $f^{-1} [B]$ is $\| S \|$ measurable by
	\cite[2.1.5\,(1)\,(4)]{MR41:1976}, secondly, in view of
	\ref{miniremark:G-push-and-restriction} and
	\ref{miniremark:def-rectifiable-G-chains},
	\begin{equation*}
		f_\# ( S + T ) = f_\# S + f_\# T
	\end{equation*}
	whenever also $T \in \mathscr R_m^{\textup{loc}} ( U, G )$ and $f |
	\spt \| T \|$ is proper, and thirdly, using \ref{remark:summation},
	\ref{thm:push-forward-rectifiable-varifolds},
	\ref{remark:functorial_varifold_push}, and
	\ref{miniremark:G-push-and-restriction},
	\begin{equation*}
		(g \circ f)_\# S = g_\# ( f_\# S )
	\end{equation*}
	whenever also $\mu$ is a positive integer, $W$ is an open subset of
	$\mathbf R^\mu$, $g : V \to W$ is locally Lipschitzian, and $g \circ f
	| \spt \| S \|$ is proper.  Finally, the homomorphism
	\begin{equation*}
		\text{$f_\# : \mathscr R_m^{\textup{loc}} ( U, G ) \cap \{ T
		\with \spt \| T \| \subset C \} \to \mathscr
		R_m^{\textup{loc}} ( V, G )$ is continuous}
	\end{equation*}
	whenever $C$ is a relatively closed subset of $U$ such that $f|C$ is
	proper.
\end{miniremark}

\begin{miniremark} \label{miniremark:product-G-chains}
	Suppose $m$ and $\mu$ are nonnegative integers, $n$ and $\nu$ are
	positive integers, $U$ and $V$ are open subsets of $\mathbf R^n$ and
	$\mathbf R^\nu$, respectively, $G$ is a complete normed commutative
	group, $S \in \mathscr R_m^{\textup{loc}} ( U, \mathbf Z )$, and $T
	\in \mathscr R_\mu^{\textup{loc}} ( V, G )$.  Then, in view of
	\cite[3.5\,(1b)]{MR0307015}, \cite[3.6\,(1)\,(2)]{MR3625810},
	\ref{remark:rectifiable-varifolds}, \ref{thm:product-rect-var}, and
	\ref{miniremark:product-grassmann-bundle}, we define the Cartesian
	product $S \times T \in \mathscr R_{m+\mu}^{\textup{loc}} ( U \times
	V, G )$ by requiring it to contain
	\begin{equation*}
		\vect S \times \vect T,
	\end{equation*}
	we have $\| S \times T \| \leq \| S \| \times \| T \|$ with equality
	if $G = \mathbf Z$ or $G = \mathbf R$ or $\boldsymbol \Uptheta^m ( \|
	S \|, x ) \leq 1$ for $\mathscr H^m$ almost all $x \in U$, and we
	notice that, if $\sigma$ in $S$ is adapted to $\phi$, $\tau$ in $T$ is
	adapted to $\chi$, $M = \{ x \with \boldsymbol \Uptheta^m ( \phi, x )
	> 0 \}$, and $N = \{ y \with \boldsymbol \Uptheta^\mu ( \chi, y ) > 0
	\}$, then, $\dmn ( \vect S \times \vect T )$ is $\mathscr H^{m+\mu}$
	almost contained in $M \times N$ and $( \sigma | M ) \times ( \tau |
	N)$ belongs to $S \times T$ and is adapted to $\phi \times \chi$;
	hence, $\vect S \times \vect T$ is adapted to $\| S \| \times \| T
	\|$. $\big ($Examples with $\mathscr H^{m+\mu} ( \dmn ( \sigma \times
	\tau ) \without (M \times N) ) > 0$ are readily constructed by means
	of \cite[3.2.24]{MR41:1976}.$\big )$ Therefore, we may verify that
	\begin{equation*}
		\times : \mathscr R_m ( U, \mathbf Z ) \times \mathscr R_\mu (
		V, G ) \to \mathscr R_{m+\mu} ( U \times V, G )
	\end{equation*}
	is a continuous bilinear operation using
	\ref{miniremark:product-grassmann-bundle} and
	\ref{miniremark:def-rectifiable-G-chains}.  Finally, if $q : U \times
	V \to V$ satisfies $q(x,y) = y$ for $(x,y) \in U \times V$ and $\spt
	\| S \|$ is compact, then
	\begin{equation*}
		\text{$q_\# ( S \times T ) = 0$ if $m>0$}, \quad \text{$q_\# (
		S \times T ) = {\textstyle \big ( \sum \vect S \big ) \cdot
		T}$ if $m = 0$},
	\end{equation*}
	where the isomorphism $\mathbf G (n,0,\mathbf Z) \simeq \mathbf Z$
	induced by the map $\psi_{\{1\}}$ of
	\ref{miniremark:normed-group-bundle} is used.
\end{miniremark}

\begin{miniremark} \label{miniremark:slicing-G-chains}
	Suppose $\kappa$, $m$, and $n$ are positive integers satisfying
	$\kappa \leq m$, $U$ is an open subset of $\mathbf R^n$, $G$ is a
	complete normed commutative group, $S \in \mathscr R_m^{\textup{loc}}
	( U, G )$, and $f : U \to \mathbf R^\kappa$ is locally Lipschitzian.
	Then, defining a $\mathbf G(n,m-\kappa,G)$ valued function $\tau$ on a
	subset of $U$ by
	\begin{equation*}
		\tau (x) = \vect S(x) \restrict ( \| S \|, m ) \ap \Der f (x)
		\quad \text{whenever $x \in U$},
	\end{equation*}
	\ref{thm:coarea-rectifiable-varifolds} yields that
	\begin{equation*}
		\tau | f^{-1} [ \{ y \} ] \in L (U,m-\kappa,G),
	\end{equation*}
	see \ref{miniremark:def-rectifiable-G-chains}, for $\mathscr L^\kappa$
	almost all $y$; for such $y$, we define $\langle S,f,y \rangle$ to be
	the unique member of $\mathscr R_{m-\kappa}^{\textup{loc}} ( U, G)$
	containing $\tau | f^{-1} [ \{ y \} ]$ so that in particular
	\begin{equation*}
		\Tan^{m-\kappa} ( \| \langle S,f,y \rangle \|, x ) = \ker \, (
		\| S \|, m ) \ap \Der f (x) \quad \text{for $\| \langle S,f,y
		\rangle \|$ almost all $x$}.
	\end{equation*}
	Moreover, for $\mathscr L^\kappa$ almost all $y$, we additionally have
	\begin{equation*}
		\| \langle S,f,y \rangle \| = \big ( \mathscr H^{m-\kappa}
		\restrict f^{-1} [ \{ y \} ] \big ) \restrict \boldsymbol
		\Uptheta^m ( \| S \|, \cdot ).
	\end{equation*}
	
	With the help of \ref{remark:rectifiable-varifolds} and
	\ref{thm:coarea-rectifiable-varifolds}, we next verify that, if
	$\sigma$ in $S$ is adapted to $\phi$, $X = \{ x \with \boldsymbol
	\Uptheta^m ( \phi, x ) > 0 \}$, $\rho$ is a $\mathbf G(n, m-\kappa,
	G)$ valued function, $\dmn \rho \subset X$, and
	\begin{equation*}
		\rho (x) = \sigma (x) \restrict ( \phi, m ) \ap \Der f (x)
		\quad \text{for $\mathscr H^m$ almost all $x \in X$},
	\end{equation*}
	then, for $\mathscr L^\kappa$ almost all $y$, the function $\rho |
	f^{-1} [ \{ y \} ]$ belongs to $\langle S,f,y \rangle$ and is adapted
	to $\big ( \mathscr H^{m-\kappa} \restrict f^{-1} [ \{ y \} ] \big)
	\restrict \boldsymbol \Uptheta^m ( \phi, \cdot)$; in particular, $\tau
	| f^{-1} [ \{ y \} ]$ in $\langle S,f,y \rangle$ is adapted to $\|
	\langle S,f,y \rangle \|$ for $\mathscr L^\kappa$ almost all $y$.  In
	view of \ref{thm:coarea-rectifiable-varifolds},
	\ref{miniremark:G-push-and-restriction}, and
	\ref{miniremark:def-rectifiable-G-chains}, we deduce, if $A$ is $\| S
	\|$ measurable and $T \in \mathscr R_m^{\textup{loc}} ( U, G )$, then,
	for $\mathscr L^\kappa$ almost all $y$,
	\begin{equation*}
		\langle S \restrict A, f, y \rangle = \langle S,f,y \rangle
		\restrict A, \quad \langle S+T, f, y \rangle = \langle S, f, y
		\rangle + \langle T, f, y \rangle.
	\end{equation*}
	Finally, taking $K_i = U \cap \{ x \with |x| \leq i, \dist (x, \mathbf
	R^n \without U ) \geq 1/i \}$, we will show that, whenever
	$\sum_{j=1}^\infty \sum_{i=1}^\infty \inf \{ 2^{-i}, \| S_j \| (K_i)
	\} < \infty$ for some $S_j \in \mathscr R_m^{\textup{loc}} ( U, G )$,
	there holds
	\begin{equation*}
		\lim_{j \to \infty} \langle S_j, f, y \rangle = 0 \quad
		\text{for $\mathscr L^\kappa$ almost all $y$};
	\end{equation*}
	in fact, whenever $\epsilon_i > 0$ satisfy $\epsilon_i \Lip ( f |
	K_i)^\kappa \leq 1$ and $\Phi : \mathbf R^\kappa \to \{ t \with 0 < t
	\leq 1 \}$ satisfies $\int \Phi \ud \mathscr L^\kappa = 1$, applying
	\ref{thm:coarea-rectifiable-varifolds}%
	\,\eqref{item:coarea-rectifiable-varifolds:int} yields
	\begin{equation*}
		{\textstyle\int} \inf \{ 2^{-i}, \epsilon_i \| \langle S_j, f,
		y \rangle \| (K_i) \} \Phi (y) \ud \mathscr L^\kappa \, y \leq
		\inf \{ 2^{-i}, \| S_j \| (K_i) \}.
	\end{equation*}
\end{miniremark}

\section{Integral chains}

\begin{miniremark} \label{miniremark:isomorphism-for-integers}
	Whenever $m$ is a nonnegative integer, $n$ is a positive integer, and
	$U$ is an open subset of $\mathbf R^n$, we employ the quotient map $p
	: \mathbf G_{\textup o} (n,m) \times \mathbf Z \to \mathbf G
	(n,m,\mathbf Z)$ and \cite[4.1.28]{MR41:1976} to define
	\begin{equation*}
		\iota_{U,m} : \mathscr R_m^{\textup{loc}} ( U ) \to \mathscr
		R_m^{\textup{loc}} ( U, \mathbf Z )
	\end{equation*}
	by letting $\iota_{U,m} ( Q ) \in \mathscr R_m^{\textup{loc}} ( U,
	\mathbf Z )$ contain $\tau : X \to \mathbf G (n,m, \mathbf Z )$ given
	by
	\begin{equation*}
		\tau (x) = p \big ( \vec Q(x), \boldsymbol \Uptheta^m ( \| Q
		\|, x ) \big ) \quad \text{for $x \in X$},
	\end{equation*}
	where $X = \{ x \with \text{$\boldsymbol \Uptheta^m ( \| Q \|, x )$ is
	a positive integer, $\vec Q(x) \in \mathbf G_{\textup o} (n,m)$} \}$,
	whenever $Q \in \mathscr R_m^{\textup{loc}} ( U )$; hence, $\|
	\iota_{U,m} ( Q ) \| = \| Q \|$, $\tau$ is adapted to $\| Q \|$, and
	$\iota_{U,m}$ yields an isomorphism of commutative groups.  For such
	$m$ and $U$, these isomorphisms have the following four properties
	whenever $Q \in \mathscr R_m^{\textup{loc}} ( U )$: Firstly, if $A$ is
	$\| Q \|$ measurable, then
	\begin{equation*}
		( \iota_{U,m} (Q) ) \restrict A = \iota_{U,m} ( Q \restrict A)
	\end{equation*}
	by \ref{miniremark:def-rectifiable-G-chains} and
	\cite[4.1.7]{MR41:1976}; secondly, if $\nu$ is a positive integer, $V$
	is an open subset of $\mathbf R^\nu$, $f : U \to V$ is locally
	Lipschitzian, and $f | \spt Q$ is proper, then
	\begin{equation*}
		f_\# ( \iota_{U,m} (Q) ) = \iota_{V,m} ( f_\# Q )
	\end{equation*}
	by the first property, \ref{miniremark:push-forward-G-chain}, and
	\cite[4.1.7, 4.1.14, 4.1.30]{MR41:1976}; thirdly, if $\mu$ and $\nu$
	are nonnegative integers, $\nu \geq 1$, $V$ is an open subset of
	$\mathbf R^\nu$, and $R \in \mathscr R_\mu^{\textup{loc}} ( V )$, then
	\begin{equation*}
		\iota_{U,m} ( Q ) \times \iota_{V,\mu} (R) = \iota_{U \times
		V, m+\mu} ( Q \times R )
	\end{equation*}
	by \ref{miniremark:product-G-chains} and \cite[4.1.8]{MR41:1976}; and,
	finally, if $\kappa$ is a positive integer, $\kappa \leq m$, and $f :
	U \to \mathbf R^\kappa$ is locally Lipschitzian, then
	\begin{equation*}
		\langle \iota_{U,m} (Q), f , y \rangle = \iota_{U,m-\kappa} (
		\langle Q,f,y \rangle ) \quad \text{for $\mathscr L^\kappa$
		almost all $y$}
	\end{equation*}
	by the first property, \ref{miniremark:slicing-G-chains}, and
	\cite[4.1.7, 4.3.1, 4.3.2\,(2), 4.3.8]{MR41:1976}.
	
	Next, whenever $m$ and $U$ are as before, we let
	\begin{equation*}
		\mathbf I_m^{\textup{loc}} ( U, \mathbf Z ) = \iota_{U,m} [
		\mathbf I_m^{\textup{loc}} ( U ) ], \quad \mathscr P_m ( U,
		\mathbf Z ) = \iota_{U,m} [ \mathscr P_m ( U ) ]
	\end{equation*}
	and, if $m \geq 1$, we employ $\boundary : \mathbf I_m^{\textup{loc}}
	( U ) \to \mathbf I_{m-1}^{\textup{loc}} ( U )$ to define the
	homomorphism
	\begin{equation*}
		{\boundary_{\mathbf Z}} : \mathbf I_m^{\textup{loc}} ( U,
		\mathbf Z ) \to \mathbf I_{m-1}^{\textup{loc}} ( U, \mathbf Z)
	\end{equation*}
	such that ${\boundary_{\mathbf Z}} \circ \iota_{U,m} = \iota_{U,m-1}
	\circ {\boundary}$.  Requiring the monomorphisms mapping $S \in
	\mathbf I_m^{\textup{loc}} ( U, \mathbf Z )$ onto
	\begin{equation*}
		\text{$(S,\boundary_{\mathbf Z}S) \in \mathscr
		R_m^{\textup{loc}} ( U, \mathbf Z ) \times \mathscr
		R_{m-1}^{\textup{loc}} ( U, \mathbf Z )$ if $m \geq 1$}, \quad
		\text{$S \in \mathscr R_m^{\textup{loc}} ( U, \mathbf Z )$ if
		$m=0$}
	\end{equation*}
	to be isometric, the groups $\mathbf I_m^{\textup{loc}} ( U, \mathbf
	Z)$ are endowed with the structure of complete normed commutative
	groups.  For $S \in \mathbf I_m^{\textup{loc}} ( U, \mathbf Z )$ with
	$m \geq 1$, we have
	\begin{equation*}
		\text{$\boundary_{\mathbf Z} ( \boundary_{\mathbf Z} S ) = 0$
		if $m \geq 2$}, \quad \spt \| \boundary_{\mathbf Z} S \|
		\subset \spt \| S \|,
	\end{equation*}
	if $\nu$ is a positive integer, $V$ is an open subset of $\mathbf
	R^\nu$, $f : U \to V$ is a locally Lipschitzian map, and $f | \spt \|
	S \|$ is proper, then we have $f_\# S \in \mathbf I_m^{\textup{loc}}
	(V, \mathbf Z )$ with $\boundary_{\mathbf Z} (f_\# S) = f_\#
	(\boundary_{\mathbf Z} S)$ by \cite[4.1.7]{MR41:1976}, and, if $f : U
	\to \mathbf R$ is locally Lipschitzian, then there holds
	\begin{gather*}
		S \restrict \{ x \with f(x) > y \} \in \mathbf
		I_m^{\textup{loc}} ( U, \mathbf Z ), \\
		\boundary_{\mathbf Z} ( S \restrict \{ x \with f(x)>y \} ) =
		\langle S,f,y \rangle + ( \boundary_{\mathbf Z} S ) \restrict
		\{ x \with f(x)>y \}
	\end{gather*}
	for $\mathscr L^1$ almost all $y$ by \cite[4.2.1, 4.3.1,
	4.3.4]{MR41:1976}.  From \cite[4.1.8]{MR41:1976}, we infer that, if
	additionally $\mu$ is a nonnegative integer, $\nu$ is a positive
	integer, and $V$ is an open subset of $\mathbf R^\nu$, we have $S
	\times T \in \mathbf I_{m+\mu}^{\textup{loc}} ( U \times V, \mathbf
	Z)$ and
	\begin{gather*}
		\text{$\boundary_{\mathbf Z} ( S \times T ) = (
		\boundary_{\mathbf Z} S ) \times T + (-1)^m \cdot ( S \times
		\boundary_{\mathbf Z} T )$ if $m > 0 < \mu$}, \\
		\text{$\boundary_{\mathbf Z} ( S \times T ) = (
		\boundary_{\mathbf Z} S ) \times T$ if $m > \mu = 0$}, \quad
		\text{$\boundary_{\mathbf Z} ( S \times T ) = S \times
		\boundary_{\mathbf Z} T$ if $m = 0 < \mu$}
	\end{gather*}
	whenever $S \in \mathbf I_m^{\textup{loc}} ( U, \mathbf Z )$ and $T
	\in \mathbf I_\mu^{\textup{loc}} ( V, \mathbf Z )$.  Finally, we let
	\begin{equation*}
		\mathbf I_m ( U, \mathbf Z ) = \mathbf I_m^{\textup{loc}} ( U,
		\mathbf Z ) \cap \{ S \with \text{$\spt \| S \|$ is compact}
		\}.
	\end{equation*}
\end{miniremark}

\begin{lemma} \label{lemma:tensor-G}
	Suppose $f : B \to A$ is a homomorphism of commutative groups.
	
	Then, the following two conditions are equivalent.
	\begin{enumerate}
		\item \label{item:tensor-G:cycle} Whenever $d$ is a
		nonnegative integer, the homomorphism
		\begin{equation*}
			f_d : B/d B \to A/d A,
		\end{equation*}
		induced by $f$, is univalent.
		\item \label{item:tensor-G:all} Whenever $G$ is a commutative
		group, the homomorphism $f \otimes \mathbf 1_G$ is univalent.
	\end{enumerate}
\end{lemma}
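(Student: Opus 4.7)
The plan is to treat the two implications separately.

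For \ref{item:tensor-G:all} $\Rightarrow$ \ref{item:tensor-G:cycle}, I would simply specialise to $G = \mathbf Z / d \mathbf Z$ and invoke the canonical isomorphisms $B \otimes ( \mathbf Z / d \mathbf Z ) \simeq B/dB$ and $A \otimes ( \mathbf Z / d \mathbf Z ) \simeq A/dA$, under which $f \otimes \mathbf 1_G$ corresponds to $f_d$; the case $d = 0$ recovers $G = \mathbf Z$ and $f_0 = f$, so that the hypothesis \ref{item:tensor-G:cycle} in particular entails that $f$ itself is univalent.

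For the main direction \ref{item:tensor-G:cycle} $\Rightarrow$ \ref{item:tensor-G:all}, I would follow the two-step strategy indicated in Step \ref{step:algebra-lemma}.  The first step is to reduce to the case that $G$ is finitely generated.  Given $x \in \ker ( f \otimes \mathbf 1_G )$, write $x = \sum_{i=1}^n b_i \otimes g_i$ and let $G_0$ be the (finitely generated) subgroup of $G$ generated by $g_1, \ldots, g_n$.  The equality $\sum_{i=1}^n f(b_i) \otimes g_i = 0$ in $A \otimes G$ is witnessed by finitely many bilinearity relations in the free abelian group on $A \times G$; enlarging $G_0$ by the finitely many elements of $G$ appearing in these relations yields a finitely generated subgroup $G_1$ of $G$ for which $\sum_{i=1}^n f(b_i) \otimes g_i = 0$ already in $A \otimes G_1$.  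Assuming the conclusion for $G_1$, this forces $\sum_{i=1}^n b_i \otimes g_i = 0$ in $B \otimes G_1$; pushing forward via the natural homomorphism $B \otimes G_1 \to B \otimes G$, we obtain $x = 0$ in $B \otimes G$.

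The second step is then to handle finitely generated $G$ directly.  The structure theorem for finitely generated commutative groups yields $G \simeq \mathbf Z^r \oplus \bigoplus_{i=1}^k \mathbf Z / d_i \mathbf Z$ for suitable nonnegative integers $r, d_1, \ldots, d_k$.  Combining distributivity of the tensor product over direct sums with the isomorphisms recalled in the first implication, $f \otimes \mathbf 1_G$ is identified with $f^r \oplus \bigoplus_{i=1}^k f_{d_i}$.  Applying \ref{item:tensor-G:cycle} at $d = 0$ gives that $f$, and hence $f^r$, is univalent; applying it at each $d = d_i$ gives that each $f_{d_i}$ is univalent; consequently the direct sum $f \otimes \mathbf 1_G$ is univalent.

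The main obstacle I anticipate is the reduction to finitely generated $G$.  Although routine, it must be carried out carefully precisely because---as emphasised in \ref{remark:tensor-G}---the tensor product in general fails to preserve univalentness, so one cannot naively tensor the inclusion $G_1 \hookrightarrow G$ with $A$ or $B$ and expect a univalent comparison map; the argument must instead be conducted at the level of the specific element $x$ by tracking the finitely many bilinearity relations defining the tensor product.
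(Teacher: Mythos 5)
Your proposal is correct and follows essentially the same route as the paper: the implication \eqref{item:tensor-G:all}$\Rightarrow$\eqref{item:tensor-G:cycle} via the canonical identification of $f_d$ with $f \otimes \mathbf 1_{\mathbf Z/d\mathbf Z}$, and the converse by first settling finitely generated $G$ through the structure theorem and distributivity over direct sums, then passing to general $G$ by expressing it as the inductive limit of its finitely generated subgroups. Your explicit element-chase through the bilinearity relations is just an unpacking of the Bourbaki result the paper cites for that last reduction, and your closing caveat correctly identifies why the comparison map $B \otimes G_1 \to B \otimes G$ may only be used in the forward direction.
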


\begin{proof}
	The homomorphisms $f_d$ correspond to $f \otimes \mathbf 1_{\mathbf Z/
	d \mathbf Z}$ via the canonical isomorphisms $B / d B \simeq B \otimes
	( \mathbf Z / d \mathbf Z )$ and $A / d A \simeq A \otimes ( \mathbf Z
	/ d \mathbf Z )$ noted in \cite[Chapter II, \S\,3.6, Corollary 2 to
	Proposition 6]{MR1727844}.  If these homomorphisms are univalent, then
	so are the homomorphisms $f \otimes \mathbf 1_G$ whenever $G$ is a
	finitely generated commutative group by \cite[Chapter II, \S\,3.7,
	Proposition 7]{MR1727844} and \cite[Chapter VII, p.\,19, Theorem
	2]{MR1994218}, whence we deduce the validity of
	\eqref{item:tensor-G:all} by \cite[Chapter II, \S\,6.3, Corollary 4 to
	Proposition 7]{MR1727844}.
\end{proof}

\begin{remark} \label{remark:tensor-G}
	The conditions imply that $f$ is univalent but the converse does not
	hold; in fact, one may take $A = \mathbf Z$, $B = 2 \mathbf Z$, $f$
	the inclusion map, and $G = \mathbf Z / 2 \mathbf Z$ by \cite[Chapter
	II, \S\,3.6, Remark to Corollary to Proposition 5]{MR1727844}.
\end{remark}

\begin{example} \label{example:monomorphisms}
	If $f$ is the inclusion map of a pair $(A,B)$ and the conditions of
	\ref{lemma:tensor-G} hold, then we shall identify $B \otimes G$ with
	the subset $(f \otimes \mathbf 1_G) [ B \otimes G]$ of $A \otimes G$.
	We will prove that, whenever $m$ is a nonnegative integer, $C \subset
	U \subset \mathbf R^n$, $U$ is open, $C$ is closed relative to $U$, we
	may take the pair $(A,B)$ to equal
	\begin{gather*}
		( \mathbf I_m ( U, \mathbf Z ), \mathscr P_m ( U, \mathbf Z)),
		\quad \big ( \mathbf I_m^{\textup{loc}} ( U, \mathbf Z ),
		\mathbf I_m ( U, \mathbf Z ) \big), \quad \big ( \mathscr
		R_m^{\textup{loc}} ( U, \mathbf Z ), \mathscr R_m ( U, \mathbf
		Z) \big ), \\
		\big ( \mathbf I_m^{\textup{loc}} ( U, \mathbf Z ), \mathbf
		I_m^{\textup{loc}} ( U, \mathbf Z ) \cap \{ S \with \spt \| S
		\| \subset C \} \big ), \\
		( \mathscr R_m ( U, \mathbf Z ), \mathbf I_m ( U, \mathbf Z)),
		\quad \text{or} \quad \big ( \mathscr R_m^{\textup{loc}} ( U,
		\mathbf Z), \mathbf I_m^{\textup{loc}} ( U, \mathbf Z) \big).
	\end{gather*}
	We recall \ref{miniremark:isomorphism-for-integers}.  Then, concerning
	the first pair, we verify that, if $d$ is a positive integer, $Q \in
	\mathbf I_m ( U )$, and $d Q \in \mathscr P_m ( U )$, then $Q \in
	\mathscr P_m ( U )$, by employing the representation of the image of
	$d Q$ in $\mathscr P_m ( \mathbf R^n )$ by oriented convex cells
	obtained in \cite[4.1.32]{MR41:1976}; concerning the last two pairs,
	localising by means of slicing in the case of the last pair, we
	similarly make use of \cite[4.2.16\,(2)]{MR41:1976}; and the remaining
	pairs trivially satisfy the conditions.
\end{example}

\begin{miniremark} \label{miniremark:bilinear}
	Suppose $m$ is a nonnegative integer, $n$ is a positive integer, $U$
	is an open subset of $\mathbf R^n$, and $G$ is a complete normed
	commutative group.  Then, we may define a bilinear operation from
	$\mathscr R_m^{\mathrm{loc}} ( U, \mathbf Z ) \times G$ into $\mathscr
	R_m^{\mathrm{loc}} ( U, G )$ by requiring that
	\begin{equation*}
		S \cdot g \in \mathscr R_m^{\mathrm{loc}} ( U, G)
	\end{equation*}
	contains $\sigma \cdot g$ whenever $\sigma \in S \in \mathscr
	R_m^{\textup{loc}} ( U, \mathbf Z )$ and $g \in G$; hence, $\| S \cdot
	g \| \leq | g | \| S \|$ with equality if $\boldsymbol \Uptheta^m ( \|
	S \|, x ) = 1$ for $\| S \|$ almost all $x$.  Whenever $S \in \mathscr
	R_m^{\textup{loc}} ( U, \mathbf Z )$ and $g \in G$, we notice the
	following four properties. If $A$ is $\| S \|$ measurable, then $( S
	\cdot g ) \restrict A = ( S \restrict A) \cdot g$; if $\nu$ is a
	positive integer, $V$ is an open subset of $\mathbf R^\nu$, and $f : U
	\to V$ is a locally Lipschitzian map such that $f | \spt \| S \|$ is
	proper, then $f_\# ( S \cdot g ) = ( f_\# S ) \cdot g$; if $\mu$ is a
	nonnegative integer, $\nu$ is a positive integer, $V$ is an open
	subset of $\mathbf R^\nu$, and $T \in \mathscr R_\mu^{\textup{loc}} (
	V, \mathbf Z )$, then $( S \times T ) \cdot g = S \times ( T \cdot
	g)$; and, if $\kappa$ is a positive integer, $\kappa \leq m$, and $f :
	U \to \mathbf R^\kappa$ is locally Lipschitzian, then $\langle S \cdot
	g, f , y \rangle = \langle S,f,y \rangle \cdot g$ for $\mathscr
	L^\kappa$ almost all $y$.
	
	Next, recalling \ref{example:monomorphisms}, we will study the
	homomorphism
	\begin{equation*}
		\rho_{U,m,G} : \mathscr R_m^{\textup{loc}} ( U, \mathbf Z)
		\otimes G \to \mathscr R_m^{\textup{loc}} ( U, G),
	\end{equation*}
	characterised by $\rho_{U,m,G} ( S \otimes g ) = S \cdot g$ for $S \in
	\mathscr R_m^{\textup{loc}} ( U, \mathbf Z )$ and $g \in G$.  Clearly,
	$\rho_{U,m,\mathbf Z}$ is the canonical isomorphism $\mathscr
	R_m^{\textup{loc}} ( U, \mathbf Z ) \otimes \mathbf Z \simeq \mathscr
	R_m^{\textup{loc}} ( U, \mathbf Z )$,
	\begin{equation*}
		\rho_{U,m,G} [ \mathscr R_m^{\textup{loc}} ( U, \mathbf Z )
		\otimes G ] = \mathscr R_m^{\textup{loc}} ( U, G ) \quad
		\text{if $G$ is finite},
	\end{equation*}
	and $\rho_{U,0,G} [ \mathscr R_0 ( U, \mathbf Z ) \otimes G ] =
	\mathscr R_0 ( U, G )$.  In general, noting $\rho_{U,m,G} [ \mathscr
	R_m^{\textup{loc}} ( U, \mathbf Z) \otimes G ]$ is dense in $\mathscr
	R_m^{\textup{loc}} ( U, G)$ and that $\mathbf I_m ( U, \mathbf Z )$ is
	dense in $\mathscr R_m^{\textup{loc}} ( U , \mathbf Z)$, we obtain
	that
	\begin{equation*}
		\text{$\rho_{U,m,G} [ \mathbf I_m ( U, \mathbf Z ) \otimes G
		]$ is dense in $\mathscr R_m^{\textup{loc}} ( U, G)$}.
	\end{equation*}
	Whenever $S \in \rho_{U,m,G} [ \mathbf I_m^{\textup{loc}} ( U, \mathbf
	Z ) \otimes G ]$ and $C$ is a relatively closed neighbourhood of $\spt
	\| S \|$ in $U$, there holds $S \in \rho_{U,m,G} \big [ ( \mathbf
	I_m^{\textup{loc}} ( U, \mathbf Z ) \cap \{ T \with \spt \| T \|
	\subset C \}) \otimes G \big ]$; in fact, this is readily verified
	using \ref{corollary:separation-smooth-Tietze} and
	\ref{miniremark:isomorphism-for-integers}.  Finally, we let
	\begin{equation*}
		\mathscr P_m ( U, G ) = \rho_{U,m,G} [ \mathscr P_m ( U,
		\mathbf Z ) \otimes G ].
	\end{equation*}
\end{miniremark}

\begin{theorem} \label{thm:rho-mono}
	Suppose $m$ is a nonnegative integer, $n$ is a positive integer, $U$
	is an open subset of $\mathbf R^n$, $G$ is a complete normed
	commutative group, and $\rho_{U,m,G}$ is as in
	\ref{miniremark:bilinear}.
	
	Then, $\rho_{U,m,G}$ is a monomorphism.
\end{theorem}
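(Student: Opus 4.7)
The plan is to start with an arbitrary element $\xi = \sum_{i=1}^N S_i \otimes g_i$ of $\mathscr R_m^{\textup{loc}} ( U, \mathbf Z ) \otimes G$ with $\rho_{U,m,G} ( \xi ) = 0$ and deduce $\xi = 0$. The subgroup $H \subset G$ generated by $g_1, \ldots, g_N$ is finitely generated, so the structure theorem for finitely generated commutative groups yields a decomposition $H \cong \mathbf Z^a \oplus \bigoplus_{q=1}^b \mathbf Z / d_q \mathbf Z$, with $a, b$ nonnegative integers and $d_q \geq 2$, together with generators $e_1, \ldots, e_a, f_1, \ldots, f_b \in H$ realising this decomposition. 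Writing $g_i = \sum_{p=1}^a \alpha_{ip} e_p + \sum_{q=1}^b \beta_{iq} f_q$ with $\alpha_{ip}, \beta_{iq} \in \mathbf Z$ and setting $A_p = \sum_i \alpha_{ip} S_i$ and $B_q = \sum_i \beta_{iq} S_i$ in $\mathscr R_m^{\textup{loc}} ( U, \mathbf Z )$, one rewrites
\begin{equation*}
	\xi = \sum_{p=1}^a A_p \otimes e_p + \sum_{q=1}^b B_q \otimes f_q \quad \text{in $\mathscr R_m^{\textup{loc}} ( U, \mathbf Z ) \otimes G$}.
\end{equation*}

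Next, the hypothesis will be translated into a pointwise identity in $G$. Taking the common weight $\phi = \sum_i \| S_i \|$ and, with the help of \ref{miniremark:def-rectifiable-G-chains}, selecting an $\mathscr H^m$ measurable orientation $\zeta : M \to \mathbf G_{\textup o} ( n, m )$ of the approximate tangent planes on $M = \{ x \with \boldsymbol \Uptheta^m ( \phi, x ) > 0 \}$, each $S_i$ admits a representative adapted to $\phi$ of the form $x \mapsto p_{\mathbf Z} ( \zeta (x), n_i ( x ) )$ with $n_i : M \to \mathbf Z$ measurable. Combining \ref{miniremark:mini-product} with the sum formula of \ref{miniremark:def-rectifiable-G-chains}, the chain $\rho_{U,m,G} ( \xi )$ is represented by the function $x \mapsto p_G ( \zeta ( x ), \sum_i n_i ( x ) g_i )$, so the hypothesis $\rho_{U,m,G} ( \xi ) = 0$ forces $\sum_{i=1}^N n_i ( x ) g_i = 0$ in $G$ for $\mathscr H^m$ almost every $x \in M$. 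Because $H$ is a subgroup of $G$ and the above decomposition is direct, substituting $g_i = \sum_p \alpha_{ip} e_p + \sum_q \beta_{iq} f_q$ converts this to the equivalent conditions $\alpha_p ( x ) := \sum_i \alpha_{ip} n_i ( x ) = 0$ for every $p$ and $d_q$ divides $\beta_q ( x ) := \sum_i \beta_{iq} n_i ( x )$ for every $q$, both holding $\mathscr H^m$ almost everywhere on $M$.

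Finally, these pointwise conditions are transferred back to the tensor product. Since $\alpha_p$ is the integer representative of $A_p$ with respect to $\zeta$ and vanishes $\mathscr H^m$ almost everywhere, one obtains $\| A_p \| = 0$, hence $A_p = 0$ in $\mathscr R_m^{\textup{loc}} ( U, \mathbf Z )$ for every $p$. Since $\gamma_q ( x ) := \beta_q ( x ) / d_q$ is integer valued $\mathscr H^m$ almost everywhere and satisfies $|\gamma_q| \leq |\beta_q|$, it defines an element $\tilde B_q \in \mathscr R_m^{\textup{loc}} ( U, \mathbf Z )$ via the representative $x \mapsto p_{\mathbf Z} ( \zeta ( x ), \gamma_q ( x ) )$, and $d_q \tilde B_q = B_q$ by construction. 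Consequently, using $d_q f_q = 0$ in $G$,
\begin{equation*}
	\xi = \sum_{q=1}^b ( d_q \tilde B_q ) \otimes f_q = \sum_{q=1}^b \tilde B_q \otimes ( d_q f_q ) = 0,
\end{equation*}
as required. The main obstacle will lie in setting up the common adapted representatives with a single $\mathscr H^m$ measurable orientation $\zeta$ on $M$, which legitimises both the extraction of the pointwise identity in $G$ from $\rho_{U,m,G} ( \xi ) = 0$ and the definition of the chain $\tilde B_q$ from the integer valued function $\gamma_q$.
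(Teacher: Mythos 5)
Your proposal is correct and follows essentially the same route as the paper's proof: reduce to the finitely generated subgroup $H$ generated by the coefficients, apply the structure theorem to split $H$ into free and cyclic torsion summands, pass to representatives adapted to the common weight $\phi = \sum_i \| S_i \|$ to extract the pointwise identity in $G$, and conclude via vanishing of the free components and divisibility by $d_q$ of the torsion components together with $d_q f_q = 0$. The only presentational difference is that you fix a global $\mathscr H^m$ measurable orientation $\zeta$ on $M$ (obtainable from the local trivialisations $\psi_U$ of \ref{miniremark:normed-group-bundle}), whereas the paper argues fibre-wise in $\boldsymbol \uppi_{\mathbf G(n,m,\mathbf Z)}^{-1} [ \{ \Tan^m ( \phi, x ) \} ]$ without choosing orientations; both are legitimate.
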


\begin{proof}
	Suppose $\xi \in \ker \rho_{U,m,G}$.  Then, for some finitely
	generated subgroup $H$ of $G$, we have $\xi \in \im ( \mathbf
	1_{\mathscr R_m^{\textup{loc}} ( U, \mathbf Z )} \otimes i)$, where $i
	: H \to G$ is the inclusion map.  In view of \cite[Chapter VII,
	p.\,19, Theorem 2]{MR1994218}, there exist integers $r$ and $s$ with
	$0 \leq r \leq s$ and integers $d_t$ with $d_t \geq 2$ such that
	\begin{equation*}
		A = \bigoplus_{t=1}^r ( \mathbf Z / d_t \mathbf Z ) \oplus
		\bigoplus_{t=r+1}^s \mathbf Z \simeq H \quad \text{(as
		commutative groups)}.
	\end{equation*}
	Choosing $h_t \in H$ corresponding to a generator of the $t$-th
	summand of $A$ under this isomorphism for $t = 1, \ldots, s$, we
	express
	\begin{equation*}
		\xi = \sum_{t=1}^s S_t \otimes i(h_t) \quad \text{for some
		$S_t \in \mathscr R_m^{\textup{loc}} ( U, \mathbf Z)$}.
	\end{equation*}
	Selecting $\sigma_t$ in $S_t$ adapted to $\phi = \sum_{t=1}^s \| S_t
	\|$, we infer that $\sum_{t=1}^s \sigma_t \cdot i(h_t)$ in $0 \in
	\mathscr R_m^{\textup{loc}} ( U, \mathbf Z )$ is adapted to $\phi$ by
	\ref{miniremark:def-rectifiable-G-chains}.  The preceding isomorphism
	then yields
	\begin{equation*}
		\sigma_t(x) \cdot i(h_t) = 0 \in \boldsymbol \uppi_{\mathbf
		G(n,m,G)}^{-1} [ \{ \Tan^m ( \phi, x ) \} ] \quad \text{for
		$t=1,\ldots,s$}
	\end{equation*}
	for $\phi$ almost all $x$; hence, we have
	\begin{gather*}
		\sigma_t(x) \in d_t \boldsymbol \uppi_{\mathbf G(n,m, \mathbf
		Z)}^{-1} [ \{ \Tan^m ( \phi, x ) \} ] \quad \text{for $t = 1,
		\ldots, r$}, \\
		\sigma_t(x) = 0 \in \boldsymbol \uppi_{\mathbf G(n,m, \mathbf
		Z)}^{-1} [ \{ \Tan^m ( \phi, x ) \} ] \quad \text{for $t =
		r+1, \ldots, s$}
	\end{gather*}
	for such $x$.  Finally, for $t=1, \ldots, r$, we infer $S_t \otimes
	i(h_t) = 0$, as $S_t \in d_t \mathscr R_m^{\textup{loc}} ( U, \mathbf
	Z )$ and $d_t \cdot h_t = 0$, whereas, for $t=r+1, \ldots, s$, we
	clearly have $S_t = 0$.
\end{proof}

\begin{remark}
	Since $S = 0$ or $d = 0$ whenever $S \in \mathscr R_m^{\textup{loc}} (
	U, \mathbf Z )$, $d \in \mathbf Z$, and $d \cdot S = 0$, we infer from
	\cite[Chapter I, \S\,2.3, Proposition 1]{MR1727221} that $\mathbf
	1_{\mathscr R_m^{\textup{loc}} ( U, \mathbf Z)} \otimes i$ is a
	monomorphism.  In view of \ref{example:torus-subgroup}, we also note
	that $H$ may fail to be isomorphic to $A$ as normed group, when $A$ is
	endowed with the standard group norm.
\end{remark}

\begin{remark} \label{remark:isomorphisms-finite-group}
	If $G$ is finite, then $\rho_{n,m,G}$ accordingly induces isomorphisms
	\begin{equation*}
		\mathscr R_m ( U, \mathbf Z ) \otimes G \simeq \mathscr R_m (
		U, G ) \quad \text{and} \quad \mathscr P_m ( U, \mathbf Z )
		\otimes G \simeq \mathscr P_m ( U, G )
	\end{equation*}
	by \ref{miniremark:def-rectifiable-G-chains},
	\ref{example:monomorphisms}, and \ref{miniremark:bilinear}.
\end{remark}

\begin{corollary} \label{corollary:boundary-operator-dense-subset}
	Whenever $m$ is a positive integer, there exists a unique homomorphism
	(see \ref{miniremark:isomorphism-for-integers} and
	\ref{miniremark:bilinear})
	\begin{equation*}
		\boundary_G : \rho_{U,m,G} [ \mathbf I_m^{\textup{loc}} ( U,
		\mathbf Z ) \otimes G ] \to \rho_{U,m-1,G} [ \mathbf
		I_{m-1}^{\textup{loc}} ( U, \mathbf Z ) \otimes G ]
	\end{equation*}
	such that $\boundary_G ( S \cdot g ) = ( \boundary_{\mathbf Z} S )
	\cdot g$ for $S \in \mathbf I_m^{\textup{loc}} ( U, \mathbf Z )$ and
	$g \in G$.  Moreover, in the case $G = \mathbf Z$, this homomorphism
	agrees with
	\begin{equation*}
		\boundary_{\mathbf Z} : \mathbf I_m^{\textup{loc}} ( U,
		\mathbf Z ) \to \mathbf I_{m-1}^{\textup{loc}} ( U, \mathbf
		Z).
	\end{equation*}
\end{corollary}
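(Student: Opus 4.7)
The plan is to define $\boundary_G$ as the unique map that makes the following diagram commute:
\begin{equation*}
	\begin{xy}
		\xymatrix{
			\mathbf I_m^{\textup{loc}} ( U, \mathbf Z ) \otimes G
			\ar[rr]^-{\boundary_{\mathbf Z} \otimes \mathbf 1_G}
			\ar[d]_-{\rho_{U,m,G}} && \mathbf
			I_{m-1}^{\textup{loc}} ( U, \mathbf Z ) \otimes G
			\ar[d]^-{\rho_{U,m-1,G}} \\
			\rho_{U,m,G} [ \mathbf I_m^{\textup{loc}} ( U, \mathbf
			Z ) \otimes G ] \ar[rr]^-{\boundary_G} && \rho_{U,m-1,G}
			[ \mathbf I_{m-1}^{\textup{loc}} ( U, \mathbf Z)
			\otimes G ]}
	\end{xy}
\end{equation*}
The top arrow is the homomorphism induced from $\boundary_{\mathbf Z} : \mathbf I_m^{\textup{loc}} ( U, \mathbf Z ) \to \mathbf I_{m-1}^{\textup{loc}} ( U, \mathbf Z )$ of \ref{miniremark:isomorphism-for-integers} by tensoring with $\mathbf 1_G$; the vertical arrows are the restrictions of $\rho_{U,m,G}$ and $\rho_{U,m-1,G}$ (see \ref{miniremark:bilinear}) to the respective subgroups obtained via the identification from \ref{example:monomorphisms}.

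The key point is that Theorem \ref{thm:rho-mono} asserts that $\rho_{U,m,G}$ is univalent; in particular, its restriction to $\mathbf I_m^{\textup{loc}} ( U, \mathbf Z ) \otimes G$ is a bijection onto its image $\rho_{U,m,G}[\mathbf I_m^{\textup{loc}} ( U, \mathbf Z ) \otimes G]$. Therefore the composition $\rho_{U,m-1,G} \circ ( \boundary_{\mathbf Z} \otimes \mathbf 1_G )$ factors uniquely through $\rho_{U,m,G} | \mathbf I_m^{\textup{loc}} ( U, \mathbf Z ) \otimes G$, which is exactly the definition of $\boundary_G$. Evaluating on a generator $S \otimes g$ with $S \in \mathbf I_m^{\textup{loc}} ( U, \mathbf Z )$ and $g \in G$ yields
\begin{equation*}
	\boundary_G ( S \cdot g ) = \boundary_G ( \rho_{U,m,G} ( S \otimes g
	)) = \rho_{U,m-1,G} ( \boundary_{\mathbf Z} S \otimes g ) = (
	\boundary_{\mathbf Z} S ) \cdot g,
\end{equation*}
as required, and uniqueness is forced by the fact that elements of the form $S \cdot g$ generate $\rho_{U,m,G}[\mathbf I_m^{\textup{loc}} ( U, \mathbf Z ) \otimes G]$ as a commutative group.

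To verify agreement with $\boundary_{\mathbf Z}$ in case $G = \mathbf Z$, it suffices to recall from \ref{miniremark:bilinear} that $\rho_{U,m,\mathbf Z}$ is the canonical isomorphism $\mathscr R_m^{\textup{loc}} ( U, \mathbf Z ) \otimes \mathbf Z \simeq \mathscr R_m^{\textup{loc}} ( U, \mathbf Z )$, sending $S \otimes 1$ to $S$; hence $\rho_{U,m,\mathbf Z}$ maps $\mathbf I_m^{\textup{loc}} ( U, \mathbf Z ) \otimes \mathbf Z$ onto $\mathbf I_m^{\textup{loc}} ( U, \mathbf Z )$, and the commutative diagram above reduces to the identity $\boundary_{\mathbf Z} ( S ) = \boundary_{\mathbf Z} ( S )$. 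No real obstacle arises: the substantive content lies entirely in Theorem \ref{thm:rho-mono}, and the corollary is a direct algebraic consequence of its univalentness statement.
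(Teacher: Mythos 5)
Your proposal is correct and follows essentially the same route as the paper: the well-definedness of $\boundary_G$ rests on the univalentness of $\rho_{U,m,G}$ from \ref{thm:rho-mono} combined with the univalentness of the canonical homomorphism $\mathbf I_m^{\textup{loc}}(U,\mathbf Z)\otimes G\to\mathscr R_m^{\textup{loc}}(U,\mathbf Z)\otimes G$ from \ref{example:monomorphisms}, so that $\boundary_{\mathbf Z}\otimes\mathbf 1_G$ transfers to the image, and the case $G=\mathbf Z$ is handled via \ref{miniremark:bilinear} exactly as in the paper.
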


\begin{proof}
	In view of \ref{thm:rho-mono}, it is sufficient for the principal
	conclusion to recall from \ref{example:monomorphisms} that the
	canonical homomorphism from $\mathbf I_m^{\textup{loc}} ( U, \mathbf
	Z) \otimes G$ into $\mathscr R_m^{\textup{loc}} ( U, \mathbf Z )
	\otimes G$ is univalent.  The postscript is then readily verified by
	means of \ref{miniremark:bilinear}.
\end{proof}

\begin{remark} \label{remark:boundary-dense-subset}
	Employing \ref{miniremark:isomorphism-for-integers} and
	\ref{miniremark:bilinear} with suitable relatively closed
	neighbourhoods $C$ of $\spt \| S \|$ in $U$, we verify that, whenever
	$S \in \rho_{U,m,G} [ \mathbf I_m^{\textup{loc}} ( U, \mathbf Z )
	\otimes G ]$, we have $\boundary_G ( \boundary_G S ) = 0$ if $m \geq
	2$, $\spt \| \boundary_G S \| \subset \spt \| S \|$, and
	\begin{equation*}
		f_\# S \in \rho_{V,m,G} [ \mathbf I_m^{\textup{loc}} ( V,
		\mathbf Z ) \otimes G ] \quad \text{with} \quad \boundary_G
		(f_\# S) = f_\# (\boundary_G S)
	\end{equation*}
	whenever $\nu$ is a positive integer, $V$ is an open subset of
	$\mathbf R^\nu$, and $f : U \to V$ is a locally Lipschitzian map such
	that $f | \spt \| S \|$ is proper; in fact, for the last equation,
	suitability of $C$ amounts to properness of $f|C$.  Finally, from
	\ref{miniremark:isomorphism-for-integers} and
	\ref{miniremark:bilinear}, we infer that, if $\mu$ is a nonnegative
	integer, $\nu$ is a positive integer, $V$ is an open subset of
	$\mathbf R^\nu$, then $S \times T \in \rho_{U \times V, m+\mu, G } [
	\mathbf I_{m+\mu}^{\textup{loc}} ( U \times V, \mathbf Z) \otimes G ]$
	and
	\begin{gather*}
		\text{$\boundary_G ( S \times T ) = ( \boundary_{\mathbf Z} S)
		\times T + (-1)^m \cdot ( S \times \boundary_G T )$ if $m > 0
		< \mu$}, \\
		\text{$\boundary_G ( S \times T ) = ( \boundary_{\mathbf Z} S)
		\times T$ if $m > \mu = 0$}, \quad \text{$\boundary_G ( S
		\times T ) = S \times \boundary_G T$ if $m = 0 < \mu$}
	\end{gather*}
	for $S \in \mathbf I_m^{\textup{loc}} ( U, \mathbf Z )$ and $T \in
	\rho_{V,\mu,G} [ \mathbf I_\mu^{\textup{loc}} ( V, \mathbf Z ) \otimes
	G ]$, and, recalling \ref{miniremark:def-rectifiable-G-chains} and
	\ref{miniremark:slicing-G-chains}, that, if $f : U \to \mathbf R$ is
	locally Lipschitzian and $S \in \rho_{U,m,G} [ \mathbf
	I_m^{\textup{loc}} ( U, \mathbf Z ) \otimes G ]$, then
	\begin{gather*}
		S \restrict \{ x \with f(x) > y \} \in \rho_{U,m,G} [ \mathbf
		I_m^{\textup{loc}} (U, \mathbf Z ) \otimes G ], \\
		\boundary_G ( S \restrict \{ x \with f(x)>y \} ) = \langle
		S,f,y \rangle + ( \boundary_G S ) \restrict \{ x \with f(x)>y
		\}
	\end{gather*}
	for $\mathscr L^1$ almost all $y$.
\end{remark}

\begin{definition} \label{definition:loc-integral-chain}
	Suppose $m$ is a nonnegative integer, $n$ is a positive integer, $U$
	is an open subset of $\mathbf R^n$, and $G$ is a complete normed
	commutative group.
	
	Then, we define the complete normed commutative group $\mathbf
	I_m^{\textup{loc}} ( U, G )$ of $m$ dimensional \emph{locally integral
	$G$ chains} in $U$ to be the subgroup (see \ref{miniremark:bilinear})
	\begin{equation*}
		\Clos \big \{ ( S, \boundary_G S ) \with S \in \rho_{U,m,G} [
		\mathbf I_m^{\textup{loc}} ( U, \mathbf Z ) \otimes G ] \big
		\}
	\end{equation*}
	of $\mathscr R_m^{\textup{loc}} ( U, G ) \times \mathscr
	R_{m-1}^{\textup{loc}} ( U, G )$, if $m \geq 1$, and $\mathbf
	I_m^{\textup{loc}} ( U, G ) = \mathscr R_m^{\textup{loc}} ( U, G )$,
	if $m = 0$.  In the case $m \geq 1$, we recall
	\ref{corollary:boundary-operator-dense-subset} and
	\ref{remark:boundary-dense-subset} to define the continuous
	homomorphism
	\begin{equation*}
		\boundary_G : \mathbf I_m^{\textup{loc}} ( U, G ) \to \mathbf
		I_{m-1}^{\textup{loc}} ( U, G )
	\end{equation*}
	by $\boundary_G ( S,T ) = (T,0)$ if $m \geq 2$ and $\boundary_G ( S,T)
	= T$ if $m = 1$ for $(S,T) \in \mathbf I_m^{\textup{loc}} ( U, G )$.
\end{definition}

\begin{remark} \label{remark:locally-integral-chains}
	Defining the monomorphisms (see
	\ref{corollary:boundary-operator-dense-subset})
	\begin{gather*}
		\kappa_{U,m,G} : \rho_{U,m,G} [ \mathbf I_m^{\textup{loc}} (
		U, \mathbf Z ) \otimes G ] \to \mathbf I_m^{\textup{loc}} ( U,
		G ), \\
		\text{$\kappa_{U,m,G} ( S ) = ( S, \boundary_G S )$ if $m \geq
		1$}, \quad \text{$\kappa_{U,m,G} (S) = S$ if $m=0$},
	\end{gather*}
	whenever $S \in \rho_{U,m,G} [ \mathbf I_m^{\textup{loc}} ( U, \mathbf
	Z ) \otimes G ]$, we employ \ref{remark:boundary-dense-subset} to
	verify, if $m \geq 1$, then
	\begin{equation*}
		\boundary_G \kappa_{U,m,G} (S) = \kappa_{U,m-1,G} (
		\boundary_G S) \quad \text{for $S \in \rho_{U,m,G} [ \mathbf
		I_m^{\textup{loc}} ( U, \mathbf Z ) \otimes G ]$}.
	\end{equation*}
	In the case $G = \mathbf Z$, the map $\kappa_{U,m,G}$ is an isometry
	between $\mathbf I_m^{\textup{loc}} ( U, \mathbf Z )$, as defined in
	\ref{miniremark:isomorphism-for-integers}, and $\mathbf
	I_m^{\textup{loc}} ( U, G )$, as defined in
	\ref{definition:loc-integral-chain}, by
	\ref{corollary:boundary-operator-dense-subset}.  In the general case,
	the next theorem will allow us to subsequently identify $\mathbf
	I_m^{\textup{loc}} ( U, G)$ with a dense subgroup of $\mathscr
	R_m^{\textup{loc}} ( U, G)$ by showing that the homomorphism mapping
	$(S,T) \in \mathbf I_m^{\textup{loc}} ( U, G )$ onto $S \in \mathscr
	R_m^{\textup{loc}} ( U, G )$ is univalent; the isometric isomorphism
	$\kappa_{U,m,\mathbf Z}$ will then correspond to the identity map on
	$\mathbf I_m^{\textup{loc}} ( U, \mathbf Z )$ justifying our notation
	for $G = \mathbf Z$.
\end{remark}

\begin{theorem} \label{thm:integral-chains}
	Suppose $m$ and $n$ are positive integers, $U$ is an open subset
	of~$\mathbf R^n$, and $G$ is a complete normed commutative group.
	
	Then, the following eight statements hold.
	\begin{enumerate}
		\item \label{item:integral-chains:mono} If $(0,T) \in \mathbf
		I_m^{\textup{loc}} ( U, G )$, then $T = 0$.
		\setcounter{enumi_memory}{\value{enumi}}
	\end{enumerate}
	Henceforward, \emph{we will identify $\mathbf I_m^{\textup{loc}} ( U,
	G )$ with the image of the monomorphism mapping $(S,T) \in \mathbf
	I_m^{\textup{loc}} ( U, G )$ onto $S \in \mathscr R_m^{\textup{loc}} (
	U, G )$.}
	\begin{enumerate}
		\setcounter{enumi}{\value{enumi_memory}}
		\item \label{item:integral-chains:dense} The subgroup $\mathbf
		I_m^{\textup{loc}} ( U, G )$ is dense in $\mathscr
		R_m^{\textup{loc}} (U,G)$.
		\item \label{item:integral-chains:extension} The subgroup
		$\rho_{U,m,G} [ \mathbf I_m^{\textup{loc}} ( U, \mathbf Z )
		\otimes G ]$, see \ref{miniremark:bilinear}, is dense in
		$\mathbf I_m^{\textup{loc}} ( U, G )$ and $\boundary_G :
		\mathbf I_m^{\textup{loc}} (U,G) \to \mathbf
		I_{m-1}^{\textup{loc}} ( U,G )$ yields the unique continuous
		extension of $\boundary_G : \rho_{U,m,G} [ \mathbf
		I_m^{\textup{loc}} ( U, \mathbf Z ) \otimes G ] \to
		\rho_{U,m-1,G} [ \mathbf I_{m-1}^{\textup{loc}} ( U, \mathbf
		Z) \otimes G ]$, see
		\ref{corollary:boundary-operator-dense-subset}.
		\item \label{item:integral-chains:complex} If $m \geq 2$, then
		$\boundary_G ( \boundary_G S ) = 0$ for $S \in \mathbf
		I_m^{\textup{loc}} ( U,G )$.
		\item \label{item:integral-chains:spt} For $S \in \mathbf
		I_m^{\textup{loc}} ( U,G )$, we have $\spt \| \boundary_G S \|
		\subset \spt \| S \|$.
		\item \label{item:integral-chains:push-forward} If $\nu$ is a
		positive integer, $V$ is an open subset of $\mathbf R^\nu$, $f
		: U \to V$ is locally Lipschitzian, $S \in \mathbf
		I_m^{\textup{loc}} ( U, G )$, and $f | \spt \| S \|$ is
		proper, then $f_\# S \in \mathbf I_m^{\textup{loc}} ( V,G )$
		and $\boundary_G ( f_\# S ) = f_\# ( \boundary_G S)$.
		\item \label{item:integral-chains:product} If $S \in \mathbf
		I_m^{\textup{loc}} ( U, \mathbf Z )$, $\mu$ is a nonnegative
		integer, $\nu$ is a positive integer, $V$ is an open subset of
		$\mathbf R^\nu$, and $T \in \mathbf I_\mu^{\textup{loc}} ( V,
		G )$, then $S \times T \in \mathbf I_{m+\mu}^{\textup{loc}} (
		U \times V, G )$ and
		\begin{gather*}
			\text{$\boundary_G ( S \times T ) = (
			\boundary_{\mathbf Z} S) \times T + (-1)^m \cdot ( S
			\times \boundary_G T )$ if $m > 0 < \mu$}, \\
			\text{$\boundary_G ( S \times T ) = (
			\boundary_{\mathbf Z} S) \times T$ if $m > \mu = 0$},
			\\
			\text{$\boundary_G ( S \times T ) = S \times
			\boundary_G T$ if $m = 0 < \mu$}.
		\end{gather*}
		\item \label{item:integral-chains:slice} If $S \in \mathbf
		I_m^{\textup{loc}} ( U, G )$ and $f : U \to \mathbf R$ is
		locally Lipschitzian, then there holds
		\begin{gather*}
			S \restrict \{ x \with f(x)>y \} \in \mathbf
			I_m^{\textup{loc}} ( U, G ), \\
			\boundary_G ( S \restrict \{ x \with f(x)>y \} ) =
			\langle S,f,y \rangle + ( \boundary_G S ) \restrict \{
			x \with f(x)>y \}
		\end{gather*}
		for $\mathscr L^1$ almost all $y$.
		\setcounter{enumi_memory}{\value{enumi}}
	\end{enumerate}
\end{theorem}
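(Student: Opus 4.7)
The plan is to single out \eqref{item:integral-chains:mono} as the decisive hurdle; once it is established, the identification of $\mathbf I_m^{\textup{loc}}(U,G)$ with its image in $\mathscr R_m^{\textup{loc}}(U,G)$ becomes legitimate, and statements \eqref{item:integral-chains:dense}--\eqref{item:integral-chains:slice} follow from the corresponding assertions for simple locally integral $G$ chains in \ref{corollary:boundary-operator-dense-subset} and \ref{remark:boundary-dense-subset}, by approximation combined with the continuity of the operations of restriction, push forward, Cartesian product, and slicing afforded by \ref{miniremark:def-rectifiable-G-chains}, \ref{miniremark:push-forward-G-chain}, \ref{miniremark:product-G-chains}, and \ref{miniremark:slicing-G-chains}.

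To prove \eqref{item:integral-chains:mono}, fix sequences $S_j$ in $\rho_{U,m,G}[\mathbf I_m^{\textup{loc}}(U,\mathbf Z) \otimes G]$ with $S_j \to 0$ in $\mathscr R_m^{\textup{loc}}(U,G)$ and $\partial_G S_j \to T$ in $\mathscr R_{m-1}^{\textup{loc}}(U,G)$; the goal is $T=0$. Using the slicing--boundary identity of \ref{remark:boundary-dense-subset} with a locally Lipschitzian exhausting function $f \colon U \to \mathbf R$ from \ref{thm:prescribed-zero-set} (for which $\{x \with f(x) \geq c\}$ is compact in $U$ for every $c > 0$), pass to restrictions $S_j \restrict \{x \with f(x) < y\}$ for suitable $y$ to reduce to the case that $\spt \|T\|$ is compact in $U$; pushing forward along the inclusion $U \hookrightarrow \mathbf R^n$ via \ref{miniremark:push-forward-G-chain} then permits $U = \mathbf R^n$. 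Next, choose a point $a \in \spt\|T\|$ at which $V_0 = \Tan^{m-1}(\|T\|,a)$ exists (at $\|T\|$ almost every point by \ref{remark:rectifiable-varifolds}), translate so that $a=0$, and by iterated slicing of $S_j$ along linear functions whose common zero set is $V_0$, localize $\spt\|T\|$ inside a thin tube around $V_0$. Composing with the orthogonal projection onto $V_0$ and passing to subsequences reduces to the \emph{model case}: $U = \mathbf R^n$ and $\spt \|T\|$ compact and contained in an $(m-1)$-dimensional vector subspace $V$ of $\mathbf R^n$.

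In the model case, let $\pi \colon \mathbf R^n \to V$ denote the orthogonal projection. Since $\dim V = m-1 < m$, the $m$-Jacobian $( \|S_j\|, m ) \ap \Jac_m \pi$ vanishes $\|S_j\|$ almost everywhere, so $\pi_\# S_j = 0$ by the formula for $\| \cdot_\# \cdot \|$ in \ref{miniremark:push-forward-G-chain}; consequently
\[
	\pi_\# \partial_G S_j = \partial_G \pi_\# S_j = 0 \quad \text{for every $j$}
\]
by the functoriality of push forward under $\partial_G$ on simple chains noted in \ref{remark:boundary-dense-subset}. Having arranged (via the previous reductions) that $\spt \|\partial_G S_j\|$ and $\spt \|T\|$ lie in a common compact set, the continuity of $\pi_\#$ on chains supported in that set, see \ref{miniremark:push-forward-G-chain}, yields $\pi_\# T = \lim_j \pi_\# \partial_G S_j = 0$; since $\pi$ restricts to $\mathbf 1_V$ and $\spt \|T\| \subset V$, a direct computation via \ref{miniremark:push-forward-G-chain} gives $\pi_\# T = T$, whence $T = 0$.

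With \eqref{item:integral-chains:mono} at hand, \eqref{item:integral-chains:extension} is immediate from \ref{definition:loc-integral-chain}, and \eqref{item:integral-chains:dense} follows from the density of simple chains in $\mathscr R_m^{\textup{loc}}(U,G)$ recorded in \ref{miniremark:bilinear}. Statements \eqref{item:integral-chains:complex}, \eqref{item:integral-chains:push-forward}, \eqref{item:integral-chains:product}, and \eqref{item:integral-chains:slice} then follow by passing to the limit in the corresponding identities of \ref{remark:boundary-dense-subset}, using the continuity properties in \ref{miniremark:push-forward-G-chain}, \ref{miniremark:product-G-chains}, \ref{miniremark:slicing-G-chains}, and \ref{miniremark:def-rectifiable-G-chains}; for \eqref{item:integral-chains:spt}, the approximating simple chains must be chosen with supports in a prescribed relatively closed neighbourhood of $\spt \|S\|$, which is afforded by the last assertion of \ref{miniremark:bilinear} in conjunction with \ref{corollary:separation-smooth-Tietze}. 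The main obstacle throughout is the reduction to the model case in \eqref{item:integral-chains:mono}, which must simultaneously preserve the simple chain structure, the convergence $\partial_G S_j \to T$, and the support control required for subsequent applications of push forward; the repeated slicings require selecting almost-every slice values that control all quantities of interest at once.
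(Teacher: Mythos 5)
Your overall architecture coincides with the paper's: statement \eqref{item:integral-chains:mono} is the decisive point, the target of the reduction is the planar model case, and the remaining statements follow from \ref{remark:boundary-dense-subset} by approximation together with the continuity of restriction, push forward, product, and slicing. Your treatment of the model case itself is correct and essentially the paper's: the $m$\nobreakdash-Jacobian of the projection onto an $(m-1)$ dimensional plane vanishes, so $\pi_\# S_j = 0$, hence $\pi_\# (\boundary_G S_j) = \boundary_G (\pi_\# S_j) = 0$, and by continuity $\pi_\# T = 0$ while $\pi_\# T = T$.

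The gap is in the reduction to the model case. You choose $a \in \spt \| T \|$ with $V_0 = \Tan^{m-1} ( \| T \|, a )$, restrict to a thin tube about $a + V_0$, and push forward by the orthogonal projection $\pi$ onto $V_0$. Nothing in this construction guarantees that the projected chain is nonzero, and without that the model case yields no contradiction with $T \neq 0$. Two things go wrong: firstly, existence of the approximate tangent plane $V_0$ at $a$ does not imply $\| T \| ( a + V_0 ) > 0$ (the support may meet that affine plane only in an $\mathscr H^{m-1}$ null set), so there is no part of $T$ that the projection is known to fix; secondly, even within a thin tube, distinct sheets of $T$ lying over the same point of $V_0$ may carry group elements summing to zero, so $\pi_\#$ can annihilate the restricted chain outright. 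The paper's proof is organised precisely to exclude this cancellation: using the structure theorem for rectifiable sets it finds a Lipschitz graph $M$ over some $P \in \mathbf G ( n, m-1 )$ with $\| T \| ( M ) > 0$, flattens $M$ onto $P$ by a bi-Lipschitz push forward, restricts to a small ball about a point $a$ with $\| T \| ( \mathbf B (a,r) \without P ) / \| T \| \, \mathbf B (a,r) \to 0$ so as to achieve $\| T \| ( P ) > \| T \| ( \mathbf R^n \without P )$, and only then applies $(P_\natural)_\#$, noting $(P_\natural)_\# ( T \restrict P ) = T \restrict P$ while $\| ( P_\natural )_\# ( T \restrict ( \mathbf R^n \without P ) ) \| ( \mathbf R^n ) \leq \| T \| ( \mathbf R^n \without P )$, whence $(P_\natural)_\# T \neq 0$. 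Some such mass comparison on a plane actually carrying positive $\| T \|$ measure is indispensable, and your sketch omits it.
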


\begin{proof}
	\eqref{item:integral-chains:complex} is trivial.  We will prove the
	following three assertions.
	\begin{enumerate}
		\setcounter{enumi}{\value{enumi_memory}}
		\item \label{item:integral-chains:pre-slice} If $(S,T) \in
		\mathbf I_m^{\textup{loc}} ( U, G )$ and $f : U \to \mathbf R$
		is locally Lipschitzian, then
		\begin{equation*}
			( S \restrict \{ x \with f(x)>y \}, \langle S,f,y
			\rangle + T \restrict \{ x \with f(x)>y \} ) \in
			\mathbf I_m^{\textup{loc}} ( U, G )
		\end{equation*}
		for $\mathscr L^1$ almost all $y$.
		\item \label{item:integral-chains:neighbourhood} If $(S,T) \in
		\mathbf I_m^{\textup{loc}} (U,G)$ and $C$ is a neighbourhood
		of $\spt ( \| S \| + \| T \| )$ which is relatively closed in
		$U$, then
		\begin{equation*}
			(S,T) \in \Clos \big \{ (R,\boundary_G R) \with R \in
			\rho_{U,m,G} [ \mathbf I_m^{\textup{loc}} ( U, \mathbf
			Z ) \otimes G ], \spt \| R \| \subset C \big \}.
		\end{equation*}
		\item \label{item:integral-chains:pre-push-forward} If $(S,T)
		\in \mathbf I_m^{\textup{loc}} ( U, G )$, $\nu$ is a positive
		integer, $V$ is an open subset of $\mathbf R^\nu$, $f : U \to
		V$ is locally Lipschitzian, and $f | \spt ( \| S \| + \| T
		\|)$ is proper, then $(f_\#S, f_\#T) \in \mathbf
		I_m^{\textup{loc}} ( V, G )$.
	\end{enumerate}
	For \eqref{item:integral-chains:pre-slice} and
	\eqref{item:integral-chains:pre-push-forward}, the special case that
	$S \in \rho_{U,m,G} [ \mathbf I_m^{\textup{loc}} ( U, \mathbf Z )
	\otimes G]$ and $T = \boundary_G S$ was noted in
	\ref{remark:boundary-dense-subset}.  The general case of
	\eqref{item:integral-chains:pre-slice} then follows by approximation
	by means of \ref{miniremark:def-rectifiable-G-chains} and
	\ref{miniremark:slicing-G-chains}.  Applying
	\ref{corollary:separation-smooth-Tietze} with $E_0 = U \without \Int
	C$ and $E_1 = \spt ( \| S \| + \| T \| )$, we deduce
	\eqref{item:integral-chains:neighbourhood} from
	\ref{remark:boundary-dense-subset}, again using
	\ref{miniremark:def-rectifiable-G-chains} and
	\ref{miniremark:slicing-G-chains}.  Selecting a relatively closed
	neighbourhood $C$ of $\spt ( \| S \| + \| T \| )$ in $U$ such that $f
	| C$ is proper, the general case of
	\eqref{item:integral-chains:pre-push-forward} follows from the special
	case by approximation based on \ref{miniremark:push-forward-G-chain},
	\ref{remark:boundary-dense-subset}, and
	\eqref{item:integral-chains:neighbourhood}.
	
	To prove \eqref{item:integral-chains:mono} in the case $m > n$, it
	suffices to note that $\mathbf I_m^{\textup{loc}} ( U, \mathbf Z )$
	and thus also $\im \kappa_{U,m,G}$ and its closure $\mathbf
	I_m^{\textup{loc}} ( U,G )$ are trivial groups.

	To prove \eqref{item:integral-chains:mono} in the case $m \leq n$, we
	suppose there were $(0,T) \in \mathbf I_m^{\textup{loc}} ( U, G )$
	such that $T \neq 0$.  Six further properties could be assumed.
	Firstly, compactness of $\spt \| T \|$  by
	\eqref{item:integral-chains:pre-slice}; secondly, $U = \mathbf R^n$ by
	\eqref{item:integral-chains:pre-push-forward}; thirdly, $\| T \| ( M )
	> 0$, where $M = \{ x + g(x) \with x \in P \}$ for some $P \in \mathbf
	G (n,m-1)$ and $g : P \to P^\perp$ with $\Lip g < \infty$, see
	\cite[2.10.43, 3.1.19\,(5), 3.2.29]{MR41:1976}; fourthly, $M = P$ by
	\eqref{item:integral-chains:pre-push-forward}; fifthly, $\| T \| ( M)
	> \| T \| ( \mathbf R^n \without M )$ by applying
	\eqref{item:integral-chains:pre-slice} with $f(x)$ replaced by $|x-a|$
	for some $a \in \mathbf R^n$ with
	\begin{equation*}
		\lim_{r \to 0+} \frac{\| T \| ( \mathbf B (a,r) \without
		M)}{\| T \| \, \mathbf B (a,r)} = 0,
	\end{equation*}
	see \cite[2.8.18, 2.9.11]{MR41:1976}; and, sixthly, $\spt \| T \|
	\subset M$ by applying \eqref{item:integral-chains:pre-push-forward}
	with $f$ replaced by $M_\natural$, as $(M_\natural)_\# T \neq 0$ would
	be ensured by the facts
	\begin{equation*}
		(M_\natural)_\# ( T \restrict M) = T \restrict M \quad
		\text{and} \quad \| ( M_\natural)_\# ( T \restrict \mathbf R^n
		\without M ) \| ( \mathbf R^n ) \leq \| T \| ( \mathbf R^n
		\without M),
	\end{equation*}
	see \ref{miniremark:def-rectifiable-G-chains} and
	\ref{miniremark:push-forward-G-chain}.  Then, taking a compact
	neighbourhood $C$ of $\spt \| T \|$, we could employ
	\eqref{item:integral-chains:neighbourhood} to secure $R_i \in
	\rho_{U,m,G} [ \mathbf I_m^{\textup{loc}} ( \mathbf R^n, \mathbf Z )
	\otimes G ]$ with $\spt \| R_i \| \subset C$ for every positive
	integer $i$ such that
	\begin{equation*}
		\lim_{i \to \infty} (R_i, \boundary_G R_i) = (0,T) \quad
		\text{in $\mathscr R_m^{\textup{loc}} ( \mathbf R^n, G )
		\times \mathscr R_{m-1}^{\textup{loc}} ( \mathbf R^n, G )$}
	\end{equation*}
	and, recalling \ref{miniremark:push-forward-G-chain}, additionally
	require $\spt \| R_i \| \subset M$ by applying
	\ref{miniremark:push-forward-G-chain} and
	\ref{remark:boundary-dense-subset} with $f$ replaced by $M_\natural$.
	Since $M \in \mathbf G (n,m-1)$, this would imply $R_i=0$ and thus
	$\boundary_G R_i = 0$ for every positive integer $i$, in contradiction
	to $T \neq 0$.
	
	Having established \eqref{item:integral-chains:mono}, the remaining
	conclusions pose no difficulty: \eqref{item:integral-chains:dense} is
	implied by \ref{miniremark:bilinear};
	\ref{remark:locally-integral-chains} yields
	\eqref{item:integral-chains:extension};
	\eqref{item:integral-chains:spt} may be inferred from
	\eqref{item:integral-chains:pre-slice};
	\eqref{item:integral-chains:push-forward} follows from
	\eqref{item:integral-chains:spt} and
	\eqref{item:integral-chains:pre-push-forward};
	\eqref{item:integral-chains:product} is deduced from
	\ref{miniremark:product-G-chains}, \ref{remark:boundary-dense-subset},
	and \eqref{item:integral-chains:extension}; finally,
	\eqref{item:integral-chains:slice} reduces to
	\eqref{item:integral-chains:pre-slice}.
\end{proof}

\begin{remark} \label{remark:Fleming-inspiration}
	The proof of \eqref{item:integral-chains:mono} was inspired by
	\cite[p.\ 163]{MR185084}.
\end{remark}

\begin{remark} \label{remark:integral-chains}
	We moreover record that, if $S \in \mathbf I_m^{\textup{loc}} ( U, G)$
	and $C$ is a neighbourhood of $\spt \| S \|$ which is relatively
	closed in $U$, then $S$ belongs to the closure of $\rho_{U,m,G} \big [
	\mathbf I_m^{\textup{loc}} ( U, \mathbf Z ) \otimes G \big ] \cap \{ T
	\with \spt \| T \| \subset C \} $ in $\mathbf I_m^{\textup{loc}} ( U,
	G)$ by \eqref{item:integral-chains:spt} and
	\eqref{item:integral-chains:neighbourhood}.
\end{remark}

\begin{corollary} \label{corollary:integral-chains:homotopy-formula}
	Suppose additionally $\nu$ is a positive integer, $V$ is an open
	subset of $\mathbf R^\nu$, $A$ is an open subinterval of $\mathbf R$,
	$0 \in A$, $t \in A$, $h : A \times U \to V$ is locally Lipschitzian,
	$f : U \to V$ and $g : U \to V$ satisfy
	\begin{equation*}
		f(x) = h(0,x) \quad \text{and} \quad g(x) = h (t,x) \quad
		\text{for $x \in U$},
	\end{equation*}
	$S \in \mathbf I_m^{\textup{loc}} ( U, G )$, and $h | ( \spt
	\boldsymbol [ 0,t \boldsymbol ] \times \spt \| S \|)$ is proper.
	
	Then, there holds $h_\# ( \boldsymbol [ 0,t \boldsymbol ] \times S )
	\in \mathbf I_{m+1}^{\textup{loc}} (V,G)$ and
	\begin{gather*}
		\text{$g_\# S - f_\# S = \boundary_G h_\# ( \boldsymbol [ 0,t
		\boldsymbol ] \times S ) + h_\# ( \boldsymbol [ 0,t
		\boldsymbol ] \times \boundary_G S )$ if $m \geq 1$}, \\
		\text{$g_\# S - f_\# S = \boundary_G h_\# ( \boldsymbol [ 0,t
		\boldsymbol ] \times S )$ if $m=0$};
	\end{gather*}
	here, $\boldsymbol [ 0,t \boldsymbol ]$ in $\mathbf I_1^{\textup{loc}}
	( A )$ is identified with $\iota_{A,1} ( \boldsymbol [ 0,t \boldsymbol
	])$ in $\mathbf I_1^{\textup{loc}} ( A, \mathbf Z )$, see
	\ref{miniremark:isomorphism-for-integers}.
\end{corollary}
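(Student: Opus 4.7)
The plan is to form the cylinder $\boldsymbol [ 0,t \boldsymbol ] \times S$ in $\mathbf I_{m+1}^{\textup{loc}}(A \times U, G)$, compute its boundary via the product formula from \eqref{item:integral-chains:product} of \ref{thm:integral-chains}, push forward by $h$ using \eqref{item:integral-chains:push-forward}, and identify the endpoint terms with $g_\# S$ and $f_\# S$.  Since $\boldsymbol [ 0,t \boldsymbol ] \in \mathbf I_1^{\textup{loc}}(A,\mathbf Z)$ via $\iota_{A,1}$, statement \eqref{item:integral-chains:product} yields $\boldsymbol [ 0,t \boldsymbol ] \times S \in \mathbf I_{m+1}^{\textup{loc}}(A \times U, G)$ together with
\begin{gather*}
	\boundary_G ( \boldsymbol [ 0,t \boldsymbol ] \times S ) = ( \boundary_{\mathbf Z} \boldsymbol [ 0,t \boldsymbol ] ) \times S - \boldsymbol [ 0,t \boldsymbol ] \times \boundary_G S \quad \text{if $m \geq 1$}, \\
	\boundary_G ( \boldsymbol [ 0,t \boldsymbol ] \times S ) = ( \boundary_{\mathbf Z} \boldsymbol [ 0,t \boldsymbol ] ) \times S \quad \text{if $m = 0$}.
\end{gather*}
As $\spt \| \boldsymbol [ 0,t \boldsymbol ] \times S \|$ is contained in $\spt \boldsymbol [ 0,t \boldsymbol ] \times \spt \| S \|$, on which $h$ is proper by hypothesis, \eqref{item:integral-chains:push-forward} then produces $h_\# ( \boldsymbol [ 0,t \boldsymbol ] \times S ) \in \mathbf I_{m+1}^{\textup{loc}}(V, G)$ with $\boundary_G h_\# ( \boldsymbol [ 0,t \boldsymbol ] \times S ) = h_\# \boundary_G ( \boldsymbol [ 0,t \boldsymbol ] \times S )$.

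For the endpoint terms, I would write $\boundary_{\mathbf Z} \boldsymbol [ 0,t \boldsymbol ] = \boldsymbol [ t \boldsymbol ] - \boldsymbol [ 0 \boldsymbol ]$ in $\mathbf I_0^{\textup{loc}}(A, \mathbf Z)$ and, for $\tau \in \{0,t\}$, let $j_\tau : U \to A \times U$ denote the insertion $j_\tau(x) = (\tau, x)$.  A direct inspection of the representing functions---via \ref{miniremark:product-grassmann-bundle}, \ref{miniremark:isomorphism-for-integers}, \ref{miniremark:def-rectifiable-G-chains}, and \ref{miniremark:push-forward-G-chain}---yields
\begin{equation*}
	\boldsymbol [ \tau \boldsymbol ] \times S = (j_\tau)_\# S \quad \text{in $\mathscr R_m^{\textup{loc}}( A \times U, G)$}.
\end{equation*}
Since $\tau \in \spt \boldsymbol [ 0,t \boldsymbol ]$, the hypothesis forces $h \circ j_\tau = h(\tau,\cdot)$ to be proper on $\spt \| S \|$, so the functoriality of push forward recorded in \ref{miniremark:push-forward-G-chain} gives
\begin{equation*}
	h_\# ( \boldsymbol [ \tau \boldsymbol ] \times S ) = (h \circ j_\tau)_\# S = h(\tau,\cdot)_\# S,
\end{equation*}
producing $g_\# S$ when $\tau = t$ and $f_\# S$ when $\tau = 0$.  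Substituting these identifications into the boundary formula and rearranging yields the two claimed equations according as $m \geq 1$ or $m = 0$.

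The main delicate point will be the identity $\boldsymbol [ \tau \boldsymbol ] \times S = (j_\tau)_\# S$: both sides are supported on $\{\tau\} \times \spt \| S \|$ and carry the image weight of $\| S \|$ under $j_\tau$, so it remains to check that the representing functions agree after translating tangent planes along $(j_\tau)_\natural$.  This is a routine but notationally involved verification based on the bilinearity of the $\times$ operation in \ref{miniremark:product-grassmann-bundle}, together with the identification that $\vect{\boldsymbol [ \tau \boldsymbol ]}(\tau) \in \mathbf G(1,0,\mathbf Z)$ corresponds to $1 \in \mathbf Z$ under the canonical isomorphism $\mathbf G(1,0,\mathbf Z) \simeq \mathbf Z$ noted at the end of \ref{miniremark:product-grassmann-bundle}; no other step of the argument presents any real difficulty.
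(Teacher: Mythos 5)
Your proposal is correct and follows essentially the same route as the paper: the paper's proof likewise forms $\boldsymbol[0,t\boldsymbol]\times S$, invokes \ref{thm:integral-chains}\,\eqref{item:integral-chains:spt}\,\eqref{item:integral-chains:push-forward}\,\eqref{item:integral-chains:product} to push forward by $h$ and compute the boundary, and relies on \ref{miniremark:push-forward-G-chain} and \ref{miniremark:product-G-chains} for the endpoint identifications that you spell out via the insertion maps $j_\tau$. Your sign bookkeeping in the product formula and the properness check for $h$ on $\spt\boldsymbol[0,t\boldsymbol]\times\spt\|S\|$ are both accurate.
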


\begin{proof}
	Recalling \ref{miniremark:push-forward-G-chain} and
	\ref{miniremark:product-G-chains}, it suffices to compute the boundary
	of the chain $h_\# ( \boldsymbol [ 0,t \boldsymbol ] \times S ) \in
	\mathbf I_{m+1}^{\textup{loc}} ( V, G )$ by means of
	\ref{thm:integral-chains}\,\eqref{item:integral-chains:spt}%
	\,\eqref{item:integral-chains:push-forward}%
	\,\eqref{item:integral-chains:product}.
\end{proof}

\begin{definition} \label{definition:integral-chain}
	Suppose $m$ and $n$ are nonnegative integers, $n \geq 1$, $U$ is an
	open subset of $\mathbf R^n$, and $G$ is a complete normed commutative
	group.
	
	Then, we let $\mathbf I_m ( U,G ) = \mathbf I_m^{\textup{loc}} ( U, G)
	\cap \{ S \with \text{$\spt \| S \|$ is compact} \}$.
\end{definition}

\begin{theorem} \label{thm:restriction-homomorphism}
	Suppose $n$ is a positive integer, $U \subset V \subset \mathbf R^n$,
	$i : U \to V$ is the inclusion map, $U$ and $V$ are open, $G$ is a
	complete normed commutative group, and
	\begin{equation*}
		r_m : \mathscr R_m^{\textup{loc}} ( V, G ) \to \mathscr
		R_m^{\textup{loc}} ( U,G ), \quad \text{whenever $m$ is a
		nonnegative integer},
	\end{equation*}
	are characterised by $\vect T | U \in r_m (T)$ for $T \in \mathscr
	R_m^{\textup{loc}} ( V, G )$.

	Then, there holds
	\begin{equation*}
		r_m \big [ \mathbf I_m^{\textup{loc}} ( V, G ) \big ] \subset
		\mathbf I_m^{\textup{loc}} ( U, G )
	\end{equation*}
	and, in case $m \geq 1$, also $\boundary_G r_m(T) = r_{m-1}
	(\boundary_G T)$ whenever $T \in \mathbf I_m^{\textup{loc}} ( V, G )$.
	In particular, we have $i_\# [ \mathbf I_m ( U,G ) ] = \mathbf I_m (
	V,G) \cap \{ T \with \spt \| T \| \subset U \}$.
\end{theorem}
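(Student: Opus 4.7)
The plan is, by continuity and the density of simple integral chains from \ref{thm:integral-chains}\,\eqref{item:integral-chains:extension}, to reduce to the case of simple integral $G$ chains, and thence---through the product $\cdot g$---to $G = \mathbf Z$, where the result is a classical restriction property of locally integral currents. First I would observe that $r_m : \mathscr R_m^{\textup{loc}} ( V, G ) \to \mathscr R_m^{\textup{loc}} ( U, G )$ is a continuous homomorphism: each compact set in the exhaustion of $U$ used to define the group norm in \ref{miniremark:def-rectifiable-G-chains} is also compact in $V$ and $\| r_m ( T ) \| ( K ) = \| T \| ( K )$ whenever $T \in \mathscr R_m^{\textup{loc}} ( V, G )$ and $K \subset U$ is compact, so every defining seminorm on the target is dominated by a seminorm on the source; the same applies to $r_{m-1}$.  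Moreover $r_m$ commutes with sums and with the product $\cdot g$ of \ref{miniremark:bilinear}, as is evident from the representing functions.

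For $Q \in \mathbf I_m^{\textup{loc}} ( V, \mathbf Z )$, the isomorphism $\iota_{V,m}$ of \ref{miniremark:isomorphism-for-integers} identifies $Q$ with $P \in \mathbf I_m^{\textup{loc}} ( V )$, and I would claim $P \restrict U \in \mathbf I_m^{\textup{loc}} ( U )$: for any compact $K \subset U$, the current $( P \restrict U ) \restrict K = P \restrict K$ lies in $\mathbf I_m ( V )$ and has support in $K \subset U$, so it canonically defines an element of $\mathbf I_m ( U )$.  Compatibility of the current boundary with restriction to the open set $U$, evaluated on test forms compactly supported in $U$, gives $\boundary ( P \restrict U ) = ( \boundary P ) \restrict U$, whence $r_m ( Q ) = \iota_{U,m} ( P \restrict U ) \in \mathbf I_m^{\textup{loc}} ( U, \mathbf Z )$ with $\boundary_{\mathbf Z} r_m ( Q ) = r_{m-1} ( \boundary_{\mathbf Z} Q )$.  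For a simple integral $G$ chain $R = \sum_k Q_k \cdot g_k$ on $V$ it then follows from the first paragraph that $r_m ( R ) = \sum_k r_m ( Q_k ) \cdot g_k$ lies in $\rho_{U,m,G} \big [ \mathbf I_m^{\textup{loc}} ( U, \mathbf Z ) \otimes G \big ] \subset \mathbf I_m^{\textup{loc}} ( U, G )$, with the boundary identity on $U$ immediate from \ref{corollary:boundary-operator-dense-subset}.

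For a general $T \in \mathbf I_m^{\textup{loc}} ( V, G )$, I would approximate by simple integral chains $R_j$ with $( R_j, \boundary_G R_j ) \to ( T, \boundary_G T )$ in the product over $V$ (using \ref{thm:integral-chains}\,\eqref{item:integral-chains:extension}); continuity of $r_m$ and $r_{m-1}$ together with the closedness of $\mathbf I_m^{\textup{loc}} ( U, G )$ in the product over $U$ (per \ref{definition:loc-integral-chain} and the identification of \ref{thm:integral-chains}\,\eqref{item:integral-chains:mono}) then yield $r_m ( T ) \in \mathbf I_m^{\textup{loc}} ( U, G )$ with $\boundary_G r_m ( T ) = r_{m-1} ( \boundary_G T )$.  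For the postscript, given $T \in \mathbf I_m ( V, G )$ with $\spt \| T \| \subset U$, the chain $S = r_m ( T )$ inherits compactness of support, so $S \in \mathbf I_m ( U, G )$ and $i_\# S = T$ by coincidence of representing functions on $U$; the reverse inclusion is \ref{thm:integral-chains}\,\eqref{item:integral-chains:push-forward}.  The main obstacle lies in the $G = \mathbf Z$ case, namely the ``sheaf-type'' recognition that compactly supported elements of $\mathbf I_m ( V )$ whose support lies in $U$ correspond canonically to elements of $\mathbf I_m ( U )$.
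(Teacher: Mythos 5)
Your architecture is genuinely different from the paper's, and—after one repair—it works: you restrict the approximating sequence of simple chains furnished by \ref{definition:loc-integral-chain} and \ref{thm:integral-chains}\,\eqref{item:integral-chains:extension}, use continuity of $r_m$ and $r_{m-1}$, and land in the closure defining $\mathbf I_m^{\textup{loc}} ( U, G )$. The paper instead slices the given chain $T$ itself: \ref{thm:prescribed-zero-set} yields $f : V \to \mathbf R$ with $U = \{ x \with f(x) > 0 \}$ and $\{ x \with f(x) \geq y \}$ compact for $y > 0$, so that for $\mathscr L^1$ almost all $y>0$ the chain $T \restrict \{ x \with f(x)>y \}$ lies in $\mathbf I_m ( V, G )$ with compact support in $U$; by \ref{miniremark:bilinear} and \ref{remark:integral-chains} such chains are push-forwards of members of $\mathbf I_m ( U, G )$, they converge to $( r_m (T), r_{m-1} ( \boundary_G T))$ locally in $U$ as $y \to 0+$, and completeness of $\mathbf I_m^{\textup{loc}} ( U, G)$ finishes. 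The point of that detour is precisely to confine the ``sheaf-type recognition'' to the \emph{compactly supported} case and thereby avoid the step your route requires, namely that restriction to $U$ maps $\mathbf I_m^{\textup{loc}} ( V, \mathbf Z )$ into $\mathbf I_m^{\textup{loc}} ( U, \mathbf Z )$.

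That step is where your justification fails. It is false that $P \restrict K \in \mathbf I_m ( V )$ for every $P \in \mathbf I_m^{\textup{loc}} ( V )$ and every compact $K \subset U$: take $n = m = 1$, $P = \mathscr L^1 \wedge \mathbf e_1$, and $K$ a compact nowhere dense set of positive measure; then $\boundary ( P \restrict K )$ is not representable by a measure of locally finite mass, so $P \restrict K \notin \mathbf I_1 ( \mathbf R )$. The fact you are after—that the restriction of $P$ to $U$ belongs to $\mathbf I_m^{\textup{loc}} ( U )$ and that restriction commutes with $\boundary$—is true and standard, but proving it from the local definition in \cite[4.1.24]{MR41:1976} requires comparing $P$, near each $a \in U$, with $P \restrict \mathbf B (a,r)$ for \emph{almost every} sufficiently small $r$ (so that \cite[4.2.1]{MR41:1976} guarantees integrality), not with $P \restrict K$ for an arbitrary compact $K$; alternatively one may combine \cite[4.1.28]{MR41:1976} with the localised closure theorem \cite[4.2.16\,(2)]{MR41:1976}. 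With that repair, the rest of your argument is sound: simple chains over $V$ restrict to simple chains over $U$, $r_m$ commutes with $\cdot\, g$ and, by \ref{corollary:boundary-operator-dense-subset}, with $\boundary_G$ on simple chains, the closure passes through, and the postscript follows as you indicate. Note finally that your closing sentence locates the difficulty in the wrong place: the recognition of compactly supported members of $\mathbf I_m ( V )$ with support in $U$ as elements of $\mathbf I_m ( U )$ is comparatively harmless, whereas the delicate point is the locally integral case without compact support—exactly what the false ``$P \restrict K$'' claim was meant to dispatch.
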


\begin{proof}
	Clearly, $r_m$ are continuous homomorphisms.  Assuming $m \geq 1$, we
	thus define the closed subgroup $H$ of $\mathbf I_m^{\textup{loc}} (
	V, G )$ to consist of those $T \in \mathbf I_m^{\textup{loc}} ( V, G)$
	with
	\begin{equation*}
		(r_m(T),r_{m-1} ( \boundary_GT )) \in \Clos \{ (S,\boundary_G
		S) \with S \in \mathbf I_m ( U, G ) \},
	\end{equation*}
	where the closure is taken in $\mathscr R_m^{\textup{loc}} ( U,G )
	\times \mathscr R_{m-1}^{\textup{loc}} (U,G)$.  Employing
	\ref{thm:integral-chains}\,\eqref{item:integral-chains:push-forward},
	we readily verify $i_\# [ \mathbf I_m ( U,G ) ] \subset H$.  Recalling
	\ref{miniremark:isomorphism-for-integers}, we infer
	\begin{equation*}
		\big ( \rho_{V,m,G} [ \mathbf I_m^{\textup{loc}} ( V, \mathbf
		Z ) \otimes G ] \big ) \cap \{ T \with \text{$\spt \| T \|$ is
		a compact subset of $U$} \} \subset i_\# [ \mathbf I_m (U,G) ]
	\end{equation*}
	from \ref{miniremark:bilinear}; hence, $\mathbf I_m ( V,G ) \cap \{ T
	\with \spt \| T \| \subset U \} \subset H$ by
	\ref{remark:integral-chains}.

	To prove $H = \mathbf I_m^{\textup{loc}} (V,G )$, we next suppose $T
	\in \mathbf I_m^{\textup{loc}} ( V, G )$ and obtain a locally
	Lipschitzian map $f : V \to \mathbf R$ such that $U = \{ x \with f(x)
	> 0 \}$ and $\{ x \with f(x) \geq y \}$ is compact for $y > 0$ from
	\ref{thm:prescribed-zero-set}.  By
	\ref{thm:integral-chains}\,\eqref{item:integral-chains:slice},
	$\mathscr L^1$ almost all $y$ belong to the set $Y$ of $y \in \mathbf
	R$ with $T \restrict \{ x \with f(x)>y \} \in \mathbf
	I_m^{\textup{loc}} ( V, G )$ and
	\begin{equation*}
		\boundary_G ( T \restrict \{ x \with f(x)>y \} ) = \langle
		T,f,y \rangle + ( \boundary_G T ) \restrict \{ x \with f(x)>y
		\};
	\end{equation*}
	therefore, we conclude $T \in H$ because $0 \in \Clos ( Y \cap \{ y
	\with y > 0 \} )$.  As $\mathbf I_m^{\textup{loc}} ( U,G )$ is
	isometric to the subgroup $\{ (S,\boundary_G S) \with S \in \mathbf
	I_m^{\textup{loc}} ( U,G ) \}$ of $\mathscr R_m^{\textup{loc}} (U,G)
	\times \mathscr R_{m-1}^{\textup{loc}} (U,G)$ and $\mathbf
	I_m^{\textup{loc}} (U,G)$ is complete, the conclusion follows.
\end{proof}

\section{Classical coefficient groups}

\begin{example} \label{example:real-coefficients}
	Whenever $m$ is a nonnegative integer and $n$ is a positive integer,
	we employ the quotient map $p : \mathbf G_{\textup o} ( n,m ) \times
	\mathbf R \to \mathbf G (n,m, \mathbf R )$ and \cite[\S\,1]{MR833403}
	to define isomorphisms (of commutative groups)
	\begin{equation*}
		\lambda_{n,m} : \mathbf F_m^{\textup{loc}} ( \mathbf R^n )
		\cap \{ Q \with \text{$Q$ has positive densities} \} \to
		\mathscr R_m^{\textup{loc}} ( \mathbf R^n, \mathbf R )
	\end{equation*}
	by letting $\lambda_{n,m} (Q) \in \mathscr R_m^{\textup{loc}} (
	\mathbf R^n, \mathbf R )$ contain $\tau : X \to \mathbf G (n,m,\mathbf
	R )$ given by
	\begin{equation*}
		\tau (x) = p \big ( \vec Q(x), \boldsymbol \Uptheta^m ( \| Q
		\|, x ) \big ) \quad \text{for $x \in X$},
	\end{equation*}
	where $X = \{ x \with \text{$0 < \boldsymbol \Uptheta^m ( \| Q \|, x )
	< \infty$ and $\vec Q(x) \in \mathbf G_{\textup o} (n,m)$} \}$,
	whenever $Q$ is an $m$ dimensional locally flat chain in $\mathbf R^n$
	with positive densities.  The formal properties of $\iota_{\mathbf
	R^n,m}$ listed in the first paragraph of
	\ref{miniremark:isomorphism-for-integers} are shared by
	$\lambda_{n,m}$ and have the same proof; moreover, we have
	$\lambda_{n,m} ( rQ ) = \iota_{\mathbf R^n, m} ( Q ) \cdot r$ for $Q
	\in \mathbf I_m^{\textup{loc}} ( \mathbf R^n )$ and $r \in \mathbf R$.
	Recalling \ref{miniremark:bilinear} and
	\ref{corollary:boundary-operator-dense-subset}, we infer that
	\begin{gather*}
		\mathscr P_m ( \mathbf R^n, \mathbf R ) = \lambda_{n,m} [
		\mathbf P_m ( \mathbf R^n ) ], \\
		\rho_{\mathbf R^n,m,\mathbf R} [ \mathbf I_m^{\textup{loc}} (
		\mathbf R^n, \mathbf Z) \otimes \mathbf R ] = \lambda_{n,m} [
		D_{n,m} ], \\
		\quad \text{where $D_{n,m}$ is the real linear span of
		$\mathbf I_m^{\textup{loc}} ( \mathbf R^n )$ in $\mathbf
		F_m^{\textup{loc}} ( \mathbf R^n ) $},
	\end{gather*}
	and, if $m \geq 1$, that $\boundary_{\mathbf R} \lambda_{n,m} (Q) =
	\lambda_{n,m-1} ( \boundary Q )$ for $Q \in D_{n,m}$.
	
	Next, we define commutative groups by
	\begin{align*}
		I_{n,m} & = \mathbf F_m^{\textup{loc}} ( \mathbf R^n ) \cap \{
		Q \with \text{$Q$ and $\boundary Q$ have positive densities}
		\} \quad \text{if $m \geq 1$}, \\
		I_{n,0} & = \mathbf F_0^{\textup{loc}} ( \mathbf R^n ) \cap \{
		Q \with \text{$Q$ has positive densities} \}.
	\end{align*}
	Clearly, we have $\mathbf I_0^{\textup{loc}} ( \mathbf R^n, \mathbf R)
	= \lambda_{n,0} [ I_{n,0} ]$.  For $m \geq 1$, we will establish that
	\begin{equation*}
		\mathbf I_m^{\textup{loc}} ( \mathbf R^n, \mathbf R) =
		\lambda_{n,m} [ I_{n,m} ] \quad \text{with} \quad
		\text{$\boundary_{\mathbf R} \lambda_{n,m} (Q) =
		\lambda_{n,m-1} ( \boundary Q)$ for $Q \in I_{n,m}$}.
	\end{equation*}
	Defining the group norm $\sigma$ on $I_{n,m}$ by
	\begin{equation*}
		\sigma (Q) = \sum_{i=1}^\infty \big ( \inf \{ 2^{-i}, \| Q \|
		\, \mathbf B(0,i) \} + \inf \{ 2^{-i}, \| \boundary Q \| \,
		\mathbf B (0,i) \} \big ) \quad \text{for $Q \in I_{n,m}$},
	\end{equation*}
	$\lambda_{n,m}$ maps $D_{n,m}$ isometrically onto $\rho_{\mathbf R^n,
	m, \mathbf R} [ \mathbf I_m^{\textup{loc}} ( \mathbf R^n, \mathbf Z )
	\otimes \mathbf R ]$.  Recalling that $\rho_{\mathbf R^n, m, \mathbf
	R} [ \mathbf I_m^{\textup{loc}} ( \mathbf R^n, \mathbf Z ) \otimes
	\mathbf R ]$ is dense in $\mathbf I_m^{\textup{loc}} ( \mathbf R^n,
	\mathbf R )$ and $\mathbf I_m^{\textup{loc}} ( \mathbf R^n, \mathbf
	R)$ is complete, it therefore suffices to prove that $D_{n,m}$ is
	$\sigma$ dense in $I_{n,m}$, as $I_{n,m}$ is $\sigma$ complete.
	Observing that $\mathbf F_m ( \mathbf R^n ) \cap I_{n,m}$ is $\sigma$
	dense in $I_{n,m}$ by \cite[4.2.1, 4.3.1, 4.3.8]{MR41:1976}, this is a
	consequence of the deformation theorem obtained in
	\cite[\S\,4]{MR833403}; in fact, employing \cite[\S\,4]{MR833403}
	instead of \cite[4.2.9]{MR41:1976} in its derivation, one readily
	verifies that one may replace $\mathbf I_m ( \mathbf R^n)$ and
	$\mathscr P_m ( \mathbf R^n )$ by $\mathbf F_m ( \mathbf R^n ) \cap
	I_{n,m}$ and $\mathbf P_m ( \mathbf R^n )$, respectively, in the
	approximation theorem \cite[4.2.20]{MR41:1976} and clearly we have
	$g_\# P \in D_{n,m}$ whenever $P \in \mathbf P_m ( \mathbf R^n )$ and
	$g : \mathbf R^n \to \mathbf R^n$ is Lipschitzian.
\end{example}

\begin{example} \label{example:integers-mod-lambda}
	Here, we relate the present treatment to \cite[4.2.26]{MR41:1976}.
	Whenever $m$ is a non\-neg\-a\-tive integer, $n$ and $d$ are positive
	integers, and $U$ is an open subset of $\mathbf R^n$, we let $p :
	\mathbf G_{\textup o} ( n,m ) \times ( \mathbf Z / d \mathbf Z ) \to
	\mathbf G(n,m, \mathbf Z / d \mathbf Z)$ denote the quotient map and
	define $\mu_{U,m,d}$ to be the composition of isomorphisms
	\begin{align*}
		\mathscr R_m^d ( U ) & \simeq \mathscr R_m ( U ) / d \mathscr
		R_m ( U ) \\
		& \simeq \mathscr R_m ( U ) \otimes ( \mathbf Z / d \mathbf Z)
		\simeq \mathscr R_m ( U, \mathbf Z ) \otimes ( \mathbf Z / d
		\mathbf Z ) \simeq \mathscr R_m ( U, \mathbf Z / d \mathbf Z),
	\end{align*}
	where the inverse of the first isomorphism thereof is induced by the
	homomorphism mapping $Q \in \mathscr R_m ( U )$ onto $(Q)^d \in
	\mathscr R_m^d ( U )$, the second is canonical, the third is
	$\iota_{U,m} \otimes \mathbf 1_{\mathbf Z/ d \mathbf Z}$, and the
	fourth is induced by $\rho_{U,m,\mathbf Z/d \mathbf Z}$; see
	\cite[4.2.26, p.\ 426, p.\ 430]{MR41:1976}, \cite[Chapter II, \S\,3.6,
	Corollary 2 to Proposition 6]{MR1727844},
	\ref{miniremark:isomorphism-for-integers}, and
	\ref{remark:isomorphisms-finite-group}, respectively.  If $Q \in
	\mathscr R_m ( U )$ and $S = \mu_{U,m,d} \big ( ( Q )^d \big )$, then
	the function $\sigma : X \to \mathbf G ( n,m, \mathbf Z/ d \mathbf
	Z)$, defined by
	\begin{equation*}
		\sigma ( x ) = p \big ( \vec Q (x), \boldsymbol \Uptheta^m (
		\| Q \|, x ) \cdot 1 \big ) \quad \text{for $x \in X$},
	\end{equation*}
	where $X = \{ x \with \text{$\vec Q(x) \in \mathbf G_{\textup o}
	(n,m)$ and $0 < \boldsymbol \Uptheta^m ( \| Q \|, x ) \in \mathbf Z$}
	\}$ and $1 \in \mathbf Z / d \mathbf Z$, belongs to $S$.  We readily
	verify the following three properties for $Q \in \mathscr R_m ( U )$
	with $q = (Q)^d$.  Firstly, $\| \mu_{U,m,d} (q) \| = \| Q \|^d$;
	secondly,
	\begin{equation*}
		(\mu_{U,m,d} (q) ) \restrict A = \mu_{U,m,d} (q \restrict A )
		\quad \text{whenever $A$ is $\| Q \|^d$ measurable};
	\end{equation*}
	and, thirdly,
	\begin{equation*}
		f_\# ( \mu_{U,m,d} ( q ) ) = \mu_{U,m,d} ( f_\# q)
	\end{equation*}
	whenever $\nu$ is a positive integer, $V$ is an open subset of
	$\mathbf R^\nu$, and $f : U \to V$ is locally Lipschitzian.  Recalling
	\ref{miniremark:bilinear} and
	\ref{corollary:boundary-operator-dense-subset}, we obtain that
	\begin{gather*}
		\mathscr P_m ( U, \mathbf Z/ d \mathbf Z ) = \mu_{U,m,d} \big
		[ \mathscr P_m^d ( U) \big ], \\
		\rho_{U,m,\mathbf Z/d \mathbf Z} \big [ \mathbf I_m ( U,
		\mathbf Z) \otimes ( \mathbf Z / d \mathbf Z ) \big ] =
		\mu_{U,m,d} \big [ \{ ( Q)^d \with Q \in \mathbf I_m ( U ) \}
		\big ], \\
		\boundary_{\mathbf Z/ d \mathbf Z} \mu_{U,m,d} (q) =
		\mu_{U,m-1,d} ( \boundary q ) \quad \text{whenever $q =
		(Q)^d$, $Q \in \mathbf I_m ( U )$, and $m \geq 1$}.
	\end{gather*}
	
	Clearly, $\mathbf I_0 ( U, \mathbf Z / d \mathbf Z ) = \mu_{U,0,d} [
	\mathbf I_0^d ( U ) ]$.  For $m \geq 1$, we will establish that
	\begin{gather*}
		\mathbf I_m ( U, \mathbf Z/ d \mathbf Z ) = \mu_{U,m,d} [
		\mathbf I_m^d ( U ) ], \\
		\boundary_{\mathbf Z/ d \mathbf Z} \mu_{U,m,d} ( q) =
		\mu_{U,m-1,d} ( \boundary q ) \quad \text{for $q \in \mathbf
		I_m^d ( U )$}.
	\end{gather*}
	We abbreviate $D = \{ ( Q)^d \with Q \in \mathbf I_m (U) \}$ and $E =
	\rho_{U,m,\mathbf Z/d \mathbf Z} [ \mathbf I_m ( U, \mathbf Z )
	\otimes ( \mathbf Z / d \mathbf Z ) ]$, define the group norm $\tau$
	on $\mathbf I_m ( U, \mathbf Z / d \mathbf Z)$ by
	\begin{equation*}
		\tau ( S ) = ( \| S \| + \| \boundary_{\mathbf Z/ d \mathbf Z}
		S \| ) ( U ) \quad \text{for $S \in \mathbf I_m ( U, \mathbf Z
		/ d \mathbf Z )$},
	\end{equation*}
	and notice that $\mu_{U,m,d}|D$ is an isometry (onto $E$) with respect
	to $\mathbf N^d$ and $\tau$.  One may readily infer the conclusion by
	combining the following four assertions, to be shown for every compact
	subset $K$ of $U$.  Firstly, $\mathbf I_m^d (U) \cap \{ r \with \spt^d
	r \subset K \}$ is $\mathbf N^d$ complete; secondly, $\mathbf I_m (U,
	\mathbf Z/ d \mathbf Z ) \cap \{ S \with \spt \| S \| \subset K \}$ is
	$\tau$ complete; thirdly, each $q \in \mathbf I_m^d (U)$ with $\spt^d
	q \subset \Int K$ belongs to the $\mathbf N^d$ closure of $D \cap \{ r
	\with \spt^d r \subset K \}$; and, fourthly, each $S \in \mathbf I_m
	(U, \mathbf Z/d \mathbf Z)$ with $\spt \| S \| \subset \Int K$ belongs
	to the $\tau$ closure of $E \cap \{ T \with \spt \| T \| \subset K
	\}$. The first two are elementary, the fourth follows from
	\ref{remark:integral-chains}, and the special case $U = \mathbf R^n$
	of the third is a consequence of \cite[(4.2.20)$^d$]{MR41:1976}.
	Observing that $i_\#$ induced by the inclusion map $i : U \to \mathbf
	R^n$, maps $D$ onto $\{ (Q)^d \with Q \in \mathbf I_m (\mathbf R^n),
	\spt^d Q \subset U \}$ and $\mathbf I_m^d ( U )$ onto $\mathbf I_m^d (
	\mathbf R^n ) \cap \{ r \with \spt^d r \subset U \}$, the general case
	of the third reduces to the special case thereof.

	Finally, we record the structural theorem
	\begin{equation*}
		\mathbf I_m^d ( U ) = \{ ( Q )^d \with Q \in \mathbf I_m ( U )
		\}
	\end{equation*}
	from \cite[Corollary 1.5]{MR3743699}; in fact, the cited source treats
	the main case $U = \mathbf R^n$ to which the case $U \neq \mathbf R^n$
	is readily reduced (for instance, by means of
	\ref{miniremark:bilinear}).
\end{example}

\begin{remark} \label{remark:Federer-correction}
	In line with the list of corrections to \cite{MR41:1976} that Federer
	maintained and distributed, we mention that, in contrast with the
	structural theorem, it had been known that
	\begin{equation*}
		\{ (Q)_K^d \with Q \in \mathbf I_{m,K} ( U ) \}
	\end{equation*}
	is in general a proper subset of $\mathbf I_{m,K}^d ( U )$; in fact,
	we will show that, taking $S$ and $f$ as in \cite[p.~426]{MR41:1976},
	$m = d = 2$, $U = \mathbf R^6$, and $K = \spt S$, an example is
	furnished by $(S)_K^2$.  Clearly, we have $S \in \mathscr R_{2,K} (
	\mathbf R^6 )$ and $\mathscr F_K^2 ( \boundary S ) = 0$ so that we
	have $(S)_K^2 \in \mathbf I_{2,K}^2 ( \mathbf R^6)$.  Moreover, if $Q
	\in \mathscr F_{2,K} ( \mathbf R^6)$ and $(Q)_K^2 = (S)_K^2$, then we
	conclude
	\begin{equation*}
		Q-S \in 2 \mathscr R_{2,K} ( \mathbf R^6 ),
	\end{equation*}
	since $\mathscr R_{3,K} ( \mathbf R^6 ) = \{ 0 \}$ so that $\mathscr
	F_{2,K} ( \mathbf R^6 ) = \mathscr R_{2,K} ( \mathbf R^6 )$, $\mathscr
	F_K ( R) = \mathbf M ( R )$ whenever $R \in \mathscr F_{2,K} ( \mathbf
	R^6)$, and $2 \mathscr F_{2,K} ( \mathbf R^6 )$ is $\mathscr F_K$
	closed in $\mathscr F_{2,K} ( \mathbf R^6 )$, whence we infer that
	\begin{equation*}
		\text{$\boldsymbol \Uptheta^2 ( \| Q \|, x )$ is an odd
		integer for $\mathscr H^2$ almost all $x \in K$}.
	\end{equation*}
	Using that $B = f [ \mathbf S^2 ]$ is a nonorientable connected
	compact two-dimensional submanifold of class $\infty$ of $\mathbf
	R^6$, we construct $c>0$ such that $\mathbf M ( \boundary ( Q
	\restrict C_j ) ) \geq c j^{-1}$ for every positive integer $j$, where
	$C_j = f \big [ \mathbf R^3 \cap \{ x \with |x| = j^{-1/2} \} \big ]$,
	so that
	\begin{equation*}
		\mathbf M ( \boundary Q ) \geq \sum_{j=1}^\infty \mathbf M (
		\boundary ( Q \restrict C_j ) ) = \infty;
	\end{equation*}
	in fact, since we have $\mathscr F_{3,B} ( \mathbf R^6 ) \subset
	\mathbf F_{3,B} ( \mathbf R^6 ) = \{ 0 \}$ by
	\cite[4.1.15]{MR41:1976}, we notice $\mathbf F_B ( R) = \mathbf M (R)$
	for $R \in \mathbf F_{2,B} ( \mathbf R^6 )$ by
	\cite[4.1.24]{MR41:1976} as well as $\mathscr F_B ( R ) = \mathbf M
	(R)$ for $R \in \mathscr F_{2,B} ( \mathbf R^6 )$, whence we firstly
	deduce
	\begin{equation*}
		\inf \{ \mathbf M ( \boundary R ) \with R \in \mathbf F_{2,B}
		( \mathbf R^6), \mathbf M ( R ) = 1 \} > 0
	\end{equation*}
	by \cite[4.1.31\,(2), 4.2.17\,(1)]{MR41:1976} and then that we may
	take $c$ to be the infimum of the set of numbers $\mathbf M (
	\boundary R )$, corresponding to all $R \in \mathscr R_{2,B} ( \mathbf
	R^6)$ such that $\boldsymbol \Uptheta^2 ( \| R \|, x)$ is an odd
	integer for $\mathscr H^2$ almost all $x \in B$ because $c>0$ by
	\cite[4.2.16\,(2), 4.2.17\,(2)]{MR41:1976}.
\end{remark}

\begin{remark} \label{remark:earlier-correction-attempt}
	The preceding remark shall complete the literature as Federer's list
	of corrections---which does not contain proofs of the above
	statements---remains unpublished and the corrections in
	\cite[2.5]{MR482509} and \cite[\S\,4]{MR3819529} are not entirely
	satisfactory.  Regarding \cite{MR3819529}, a revised version will
	appear in \cite{MR3819529-corrigendum}.
\end{remark}

\section{Constancy theorem}

\begin{theorem} \label{thm:constancy-theorem}
	Suppose $m$ and $n$ are positive integers, $U$ is an open subset
	of~$\mathbf R^n$, $M$ is a connected $m$ dimensional submanifold of
	class $1$ of $U$, $\zeta$ is an $m$ vector field orienting $M$, $G$ is
	a complete normed commutative group, $S \in \mathbf I_m^{\textup{loc}}
	( U, G )$,
	\begin{equation*}
		M \cap \spt \| S \| \neq \varnothing, \quad \spt \|
		\boundary_G S \| \subset U \without M,
	\end{equation*}
	and $( \spt \| S \| ) \without M$ is closed relative to $U$.

	Then, for some nonzero member $g$ of $G$, there holds (see
	\ref{miniremark:isomorphism-for-integers})
	\begin{equation*}
		\spt \| S - \iota_{U,m} ( ( \mathscr H^m \restrict M ) \wedge
		\zeta ) \cdot g \| \subset U \without M.
	\end{equation*}
\end{theorem}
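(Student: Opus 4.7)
The plan is to establish the theorem together with the model case announced in the introduction (for $U = \mathbf{R}^m$, $n = m$, $M = \mathbf R^m \cap \{x \with a_i < x_i < b_i \text{ for } i=1,\ldots,m\}$, concluding that any $T \in \mathbf{I}_m(\mathbf{R}^m, G)$ with $\spt \| \boundary_G T \| \subset \Bdry M$ equals $Q \cdot g$ for some $g \in G$, where $Q = \iota_{\mathbf{R}^m,m}((\mathscr L^m \restrict M) \wedge \mathbf e_1 \wedge \cdots \wedge \mathbf e_m)$) by simultaneous induction on $m$. The restriction operators $r_m$ of Theorem~\ref{thm:restriction-homomorphism} provide the transfer between the ambient statement and the Euclidean model, realising the "adapt and merge" strategy advertised in the introduction.

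For the general statement given the inductive model case, at each $p \in M$ choose a $C^1$ chart $\phi$ on a relatively open $V \subset U$ containing $p$ flattening $V \cap M$ to an open box in $\mathbf R^m$ and matching $\zeta$ with the standard $m$-vector (up to a sign absorbed in $g$). Since $( \spt \| S \| ) \without M$ is relatively closed in $U$, $V$ can be further shrunk to avoid it. Restricting $S$ to $V$ via $r_m$ and pushing forward through $\phi$ via Theorem~\ref{thm:integral-chains}\,\eqref{item:integral-chains:push-forward} yields a chain satisfying the hypotheses of the model case and hence a constant $g_p \in G$ with $S = \iota_{U,m} ( ( \mathscr H^m \restrict M ) \wedge \zeta ) \cdot g_p$ on a neighborhood of $p$ in $U$. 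The structural description of representing functions in~\ref{miniremark:def-rectifiable-G-chains} forces $g_p$ to be uniquely determined by $S$, whence connectedness of $M$ lets the $g_p$'s agree on overlapping neighborhoods and combine into a single $g \in G$; the hypothesis $M \cap \spt \| S \| \neq \varnothing$ then forces $g \neq 0$.

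For the model case I induct on $m$. In the base case $m=1$, slicing $T$ by the coordinate function $x_1$ via Theorem~\ref{thm:integral-chains}\,\eqref{item:integral-chains:slice} and exploiting $\spt \| \boundary_G T \| \subset \{a, b\}$ forces the $0$-dimensional slice $\langle T, x_1, y \rangle$ to be, for $\mathscr L^1$-a.e.\ $y \in (a, b)$, a single point mass at $y$ with a $y$-independent coefficient $g \in G$, giving $T = Q \cdot g$. For the inductive step $m \geq 2$, slicing $T$ by $x_1$ yields, for $\mathscr L^1$-a.e.\ $y \in (a_1, b_1)$, an $(m-1)$-dimensional slice on the hyperplane $\{x_1 = y\}$ whose $\boundary_G$-support lies in the boundary of the sub-box $\{y\} \times \prod_{i \geq 2}(a_i, b_i)$; applying the $(m-1)$-dimensional model case (after the obvious identification with $\mathbf{R}^{m-1}$) produces $g(y) \in G$ such that the slice equals $g(y)$ times the standard current on the slice of $M$.

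The principal obstacle is showing that the map $y \mapsto g(y)$ is essentially constant. I would tackle this in two stages. First, using the classical $\mathbf Z$-constancy of \cite[4.1.31\,(2)]{MR41:1976} together with the tensor-level description of simple locally integral chains from Step~\ref{step:simple-integral-chains} and the density result in Theorem~\ref{thm:integral-chains}\,\eqref{item:integral-chains:extension}, deduce that $\spt T \subset \Clos M$ (no components of $T$ can live away from $\Clos M$ because their boundary would have to vanish). Second, fix $y_1 < y_2$ in $(a_1, b_1)$, replace $T$ by $\tilde T = T - Q \cdot g(y_1)$, and restrict $\tilde T$ to $\{ x \with y_1 < x_1 < y_2 \}$ using Theorem~\ref{thm:integral-chains}\,\eqref{item:integral-chains:slice}; the boundary of this restriction consists of the slices at $y_1$ and $y_2$ (both supported in $\Clos M$) plus a piece of $\boundary_G \tilde T$ in $\Bdry M$, reducing the identification $g(y_2) = g(y_1)$ to a one-dimensional constancy problem on $(a_1,b_1)$ whose solution follows from the $m=1$ base case of the present induction. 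This closes the inductive step and completes the proof.
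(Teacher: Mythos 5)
Your outer architecture matches the paper's: a simultaneous induction on $m$ in which the general statement is deduced from a Euclidean model case via $C^1$ charts, the restriction operators $r_m$ of \ref{thm:restriction-homomorphism}, push forward, and a connectedness argument to glue the local constants. (Two smaller points there: the chart image $\phi_\#r_m(S)$ is only a \emph{locally} integral chain with vanishing boundary, not a compactly supported chain with boundary in the frontier of a bounded box, so you still need an exhaustion step — the paper does this by restricting to cubes $C_r$ and showing the resulting $\alpha(r)$ is constant; and your appeal to ``$\mathbf Z$-constancy plus the tensor description plus density'' to get $\spt\|T\|\subset\Clos M$ does not work as stated, since the boundary hypothesis constrains only the sum $\sum S_t\cdot g_t$ and not the individual integer chains $S_t$, and the property is not obviously stable under the limits defining $\mathbf I_m^{\textup{loc}}(U,G)$.)

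The genuine gap is in your treatment of the model case. You slice $T$ by $x_1$, apply the $(m-1)$-dimensional model case to each slice to obtain coefficients $g(y)$, and then must show $y\mapsto g(y)$ is essentially constant; but your proposed ``reduction to a one-dimensional constancy problem'' is not carried out and, as far as I can see, cannot be: after subtracting $Q\cdot g(y_1)$ and restricting to the slab $\{y_1<x_1<y_2\}$, what you know is that the boundary of this restriction equals $-Q_{y_2}\cdot(g(y_2)-g(y_1))$ plus a chain supported in $\Bdry C$, and concluding from this that $g(y_2)=g(y_1)$ is precisely the constancy statement you are trying to prove (projecting to the $x_1$-axis kills the $m$-dimensional chain, and slicing in the remaining variables would require a Fubini-type commutation of iterated slices that is not available in the paper). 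The paper avoids this issue entirely: using the affine homotopy $h(t,x)=((1-t)a_1+tx_1,x_2,\ldots,x_m)$ and \ref{corollary:integral-chains:homotopy-formula}, it shows $T=h_\#\big(I\times((\boundary_G T)\restrict B)\big)$ with $B=\{x\with x_1=b_1\}$ — so $T$ is reconstructed as a single prism over one face of its boundary, with no slice-by-slice coefficients to compare — and then identifies $(\boundary_G T)\restrict B$ by applying the \emph{general} theorem in dimension $m-1$ (not the model case) to the open face $M=B\cap U$. You would need to replace your slicing step by an argument of this cone/prism type, or supply an independent proof that the slice coefficients agree.
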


\begin{proof}
	We firstly establish that if either $m=1$ or $m>1$ and the statement
	of the theorem holds with $m$ replaced by $m-1$, then the following
	assertion holds: \emph{If $- \infty < a_i < b_i < \infty$ for $i = 1,
	\ldots, m$,
	\begin{equation*}
		C = \mathbf R^m \cap \{ x \with \textup{$a_i \leq x_i \leq
		b_i$ for $i = 1, \ldots, m$} \},
	\end{equation*}
	and $T \in \mathbf I_m ( \mathbf R^m, G )$ satisfies $\spt \|
	\boundary_G T \| \subset \Bdry C$, then there exists $g$ in $G$ with
	$T = \iota_{\mathbf R^m,m} ( \boldsymbol [ a_1, b_1 \boldsymbol ]
	\times \cdots \times \boldsymbol [ a_m, b_m \boldsymbol ] ) \cdot g$.}
	We abbreviate $B = \mathbf R^m \cap \{ x \with x_1 = b_1 \}$, $I =
	\iota_{\mathbf R,1} ( \boldsymbol [0,1] )$, and define $f : \mathbf
	R^m \to \mathbf R^m$ and $h : \mathbf R \times \mathbf R^m \to \mathbf
	R^m$ by
	\begin{equation*}
		\begin{gathered}
			f(x) = (a_1, x_2, \ldots, x_m), \\
			h(t,x) = (1-t)f(x) + tx = \big ( (1-t) a_1 + t x_1,
			x_2, \ldots, x_m \big )
		\end{gathered}
	\end{equation*}
	for $(t,x) \in \mathbf R \times \mathbf R^m$.  We see from
	\ref{corollary:integral-chains:homotopy-formula} that
	\begin{equation*}
		T = h_\# ( I \times (( \boundary_G T) \restrict B ) );
	\end{equation*}
	in fact, in view of \ref{miniremark:push-forward-G-chain}, we conclude
	that $f_\# T = 0$, that $h_\# ( I \times T) = 0$, and, since $\mathscr
	H^m ( h [ \mathbf R \times (( \Bdry C) \without B )]) = 0$, also that
	$h_\# \big ( I \times (( \boundary_G T ) \restrict ( \mathbf R^m
	\without B)) \big ) = 0$.  If $m = 1$, then $B = \{ b_1 \}$ and there
	exists $g \in G$ with $( \boundary_G T ) \restrict B = \iota_{\mathbf
	R, 0} ( \boldsymbol [ b_1 \boldsymbol ] ) \cdot g$ and
	\begin{equation*}
		T = \iota_{\mathbf R,1} \big ( h_\# ( \boldsymbol [ 0,1
		\boldsymbol ] \times \boldsymbol [ b_1 \boldsymbol ] ) \big )
		\cdot g = \iota_{\mathbf R, 1} ( \boldsymbol [ a_1, b_1 ] )
		\cdot g,
	\end{equation*}
	both by \ref{miniremark:isomorphism-for-integers} and
	\ref{miniremark:bilinear}.  If $m > 1$, then, assuming $T \neq 0$,
	denoting the standard basis of $\mathbf R^m$ by $e_1, \ldots, e_m$,
	and taking $n=m$,
	\begin{equation*}
		\begin{gathered}
			U = \mathbf R^m \cap \{ x \with \text{$x_1 > a_1$,
			$a_i < x_i < b_i$ for $i = 2, \ldots, m$} \}, \quad M
			= B \cap U, \\
			\text{$\zeta(y)= e_2 \wedge \cdots \wedge e_m$ for $y
			\in M$}, \quad \text{$S = r_{m-1} ( \boundary_G T )
			\in \mathbf I_{m-1}^{\textup{loc}} ( U, G )$, see
			\ref{thm:restriction-homomorphism}},
		\end{gathered}
	\end{equation*}
	hence $B \cap \Bdry C$ and $M = U \cap \Bdry C$ are $\mathscr H^{m-1}$
	almost equal, $\boundary_G S = 0$ by
	\ref{thm:integral-chains}\,\eqref{item:integral-chains:complex}, and
	$\varnothing \neq \spt \| S \| \subset M$ since $T \neq 0$, we apply
	the statement of the theorem with $m$ replaced by $m-1$ to infer, for
	some $g \in G$, that
	\begin{equation*}
		S = \iota_{U,m-1} \big ( ( \mathscr H^{m-1} \restrict M )
		\wedge \zeta \big ) \cdot g;
	\end{equation*}
	thus,
	\begin{equation*}
		\begin{aligned}
			( \boundary_G T ) \restrict B & = \iota_{\mathbf
			R^m,m-1} \big ( ( \mathscr H^{m-1} \restrict M )
			\wedge \zeta \big ) \cdot g \\
			& = \iota_{\mathbf R^m,m-1} \big ( \xi_\# (
			\boldsymbol [a_2,b_2 \boldsymbol ] \times \cdots
			\times \boldsymbol [ a_m,b_m \boldsymbol ] ) \big )
			\cdot g,
		\end{aligned}
	\end{equation*}
	where $\xi : \mathbf R^{m-1} \to \mathbf R^m$ is defined by $\xi (y) =
	(b_1, y_1, \ldots, y_{m-1} )$ for $y \in \mathbf R^{m-1}$, so that
	\ref{miniremark:isomorphism-for-integers} and
	\ref{miniremark:bilinear} yield
	\begin{align*}
		T & = \iota_{\mathbf R^m,m} \big ( h_\# ( \boldsymbol [0,1
		\boldsymbol ] \times \xi_\# ( \boldsymbol [ a_2, b_2
		\boldsymbol ] \times \cdots \times \boldsymbol [ a_m, b_m
		\boldsymbol ] ) ) \big ) \cdot g \\
		& = \iota_{\mathbf R^m,m} ( \boldsymbol [ a_1, b_1 \boldsymbol
		] \times \cdots \times \boldsymbol [ a_m, b_m \boldsymbol ] )
		\cdot g,
	\end{align*}
	because $h(t,\xi(y)) = ((1-t)a_1 + tb_1, y_1, \ldots, y_{m-1} )$ for
	$(t,y) \in \mathbf R \times \mathbf R^{m-1}$.

	Secondly, we will show that the special case $n=m$ and $U = M =
	\mathbf R^m$ of the statement of the theorem holds for $m$ whenever
	the assertion of the first paragraph holds for this $m$.  Abbreviating
	\begin{equation*}
		C_r = \mathbf R^m \cap \{ x \with \text{$|x_i| \leq r$ for $i
		= 1, \ldots, m$} \},
	\end{equation*}
	we see that $\mathscr L^1$ almost all positive real numbers belong to
	the set $A$ of $0 < r < \infty$ satisfying $S \restrict C_r \in
	\mathbf I_m ( \mathbf R^n, G )$ and $\spt \| \boundary_G ( S \restrict
	C_r ) \| \subset \Bdry C_r$ by
	\ref{thm:integral-chains}\,\eqref{item:integral-chains:slice}.  The
	assertion of the first paragraph then yields $\alpha : A \to G$ with
	\begin{equation*}
		S \restrict C_r = \iota_{\mathbf R^m,m} \big ( (\mathscr L^m
		\restrict C_r) \wedge \zeta \big ) \cdot \alpha (r) \quad
		\text{for $r \in A$}.
	\end{equation*}
	Recalling \ref{miniremark:def-rectifiable-G-chains} and
	\ref{miniremark:bilinear}, we compute
	\begin{align*}
		& \iota_{\mathbf R^m,m} \big ( ( \mathscr L^m \restrict C_r )
		\wedge \zeta ) \cdot ( \alpha(s)-\alpha(r)) \\
		& \qquad = S \restrict ( C_s \without C_r ) - \iota_{\mathbf
		R^m,m} \big ( ( \mathscr L^m \restrict (C_s \without C_r) )
		\wedge \zeta \big ) \cdot \alpha (s)
	\end{align*}
	whenever $r,s \in A$ and $r < s$ and conclude that $\im \alpha$
	consists of a single point $g \in G$, that $S = \iota_{\mathbf R^m,m}
	( \mathscr L^m \wedge \zeta ) \cdot g$, and that $g \neq 0$.

	It remains to prove that the statement of the theorem holds for $m$
	whenever the assertion of the first paragraph holds for this $m$.  For
	this purpose, we consider the class $\Omega$ of all $(V,h)$ such that
	$V$ is an open subset $U$, $h \in G$, and
	\begin{gather*}
		M \cap V \neq \varnothing, \quad V \cap \spt \| S \| \subset
		M, \quad ( \mathscr H^m \restrict M \cap V ) \wedge \zeta \in
		\mathscr R_m^{\textup{loc}} ( U ), \\
		V \cap \spt \big \| S - \iota_{U,m} \big ( ( \mathscr H^m
		\restrict M \cap V ) \wedge \zeta \big ) \cdot h \big \| =
		\varnothing.
	\end{gather*}

	To establish $M = \bigcup \{ M \cap V \with (V,h) \in \Omega \}$, we
	suppose $a \in M$, recall that nontrivial open balls in $\mathbf R^m$
	are diffeomorphic to $\mathbf R^m$, and employ
	\cite[3.1.19\,(4)]{MR41:1976} in constructing an open subset $V$ of
	$U$ with $a \in V$, $V \cap \spt \| S \| \subset M$, and $\mathscr H^m
	( M \cap V) < \infty$ as well as maps $\phi : V \to \mathbf R^m$ and
	$\psi : \mathbf R^m \to V$ of class $1$ such that
	\begin{equation*}
		\phi \circ \psi = \mathbf 1_{\mathbf R^m}, \quad M \cap V =
		\im \psi.
	\end{equation*}
	We observe that $\psi$ and $\phi | \im \psi$ are proper and choose an
	orientation $\eta$ of $\mathbf R^m$ such that
	\begin{equation*}
		\psi_\# ( \mathscr L^m \wedge \eta ) = ( \mathscr H^m
		\restrict M \cap V ) \wedge \zeta \in \mathscr
		R_m^{\textup{loc}} ( V )
	\end{equation*}
	by means of \cite[4.1.31]{MR41:1976} and
	\ref{miniremark:push-forward-G-chain}.  As $V \cap \spt \| \boundary_G
	S \| = \varnothing$ by our hypothesis and
	\ref{thm:integral-chains}\,\eqref{item:integral-chains:complex}, we
	may apply \ref{thm:restriction-homomorphism} (with the roles of $U$
	and $V$ exchanged) to obtain $S' = r_m ( S ) \in \mathbf
	I_m^{\textup{loc}} ( V,G )$ with $\boundary_G S' = 0$.  Defining $T =
	\phi_\# S' \in \mathbf I_m^{\textup{loc}} ( \mathbf R^m, G )$ with
	$\boundary_G T = 0$ by
	\ref{thm:integral-chains}\,\eqref{item:integral-chains:push-forward},
	the special case yields $h \in G$ with
	\begin{equation*}
		T = \iota_{\mathbf R^m,m} ( \mathscr L^m \wedge \eta ) \cdot
		h.
	\end{equation*}
	Noting $\psi \circ \phi | \im \psi = \mathbf 1_{\im \psi}$, we
	conclude $S' = \psi_\# T = \iota_{V,m} \big ( ( \mathscr H^m \restrict
	M \cap V ) \wedge \zeta \big ) \cdot h$ by
	\ref{miniremark:push-forward-G-chain} and \ref{miniremark:bilinear},
	whence we infer $(V,h) \in \Omega$.

	If $(V_1,h_1)$ and $(V_2,h_2)$ belong to $\Omega$ and $M \cap V_1 \cap
	V_2 \neq \varnothing$, then $h_1 = h_2$; in fact, $\mathscr H^m ( M
	\cap V_1 \cap V_2 ) > 0$ and, by
	\ref{miniremark:def-rectifiable-G-chains} and
	\ref{miniremark:bilinear}, we have
	\begin{align*}
		& V_1 \cap V_2 \cap \spt \big \| \iota_{U,m} \big ( ( \mathscr
		H^m \restrict M \cap V_1 \cap V_2 ) \wedge \zeta \big) \cdot (
		h_1 - h_2 ) \big \| \\
		& \qquad \subset \spt \big \| \iota_{U,m} \big ( ( \mathscr
		H^m \restrict M \cap V_1 ) \wedge \zeta \big ) \cdot h_1 -
		\iota_{U,m} \big ( ( \mathscr H^m \restrict M \cap V_2 )
		\wedge \zeta \big ) \cdot h_2 \big \|
	\end{align*}
	with the latter set not meeting $V_1 \cap V_2$.  Since $M \cap \spt \|
	S \| \neq \varnothing$, we can select a nonzero $g \in G$ with
	\begin{equation*}
		\Upsilon = \Omega \cap \{ (V,h) \with h=g \} \neq \varnothing
	\end{equation*}
	and infer that $\bigcup \{ M \cap V \with (V,g) \in \Omega \}$ is a
	nonempty, relatively open and relatively closed subset of $M$, hence
	equals $M$.  We conclude $\Upsilon = \Omega$.  Since $g \neq 0$ and
	\begin{equation*}
		\| S \| \restrict V = ( \mathscr H^m \restrict M \cap V ) | g
		| \quad \text{for $(V,g) \in \Omega$}
	\end{equation*}
	by \ref{miniremark:bilinear}, we also have $( \mathscr H^m \restrict
	M) \wedge \zeta \in \mathscr R_m^{\textup{loc}} ( U)$ and the
	conclusion follows.
\end{proof}

\begin{remark} \label{remark:constacy-theorem-model}
	The preceding theorem is modelled on \cite[4.1.31\,(2)]{MR41:1976}
	where the case $G = \mathbf R$ is treated.  In the present case,
	orientability of $M$ must be part of the hypotheses (rather than the
	conclusion) by \cite[4.2.26, p.\,432]{MR41:1976} and
	\ref{example:integers-mod-lambda}.
\end{remark}

\begin{remark} \label{remark:constancy-theorem-discussion}
	The special case $U = \mathbf R^n$ and $M$ an $m$ dimensional cube is
	a basic ingredient of deformation theorems.  For $G = \mathbf Z$ or $G
	= \mathbf R$, this follows from the constancy theorem derived in
	\cite[4.1.4, 4.1.7]{MR41:1976}.  An alternative approach for $G =
	\mathbf R$, designed to be extendable to $G = \mathbf Z / d \mathbf
	Z$, is given in \cite[4.2.3]{MR41:1976}.  However, the flat chain $X
	\restrict \mathbf R^m \without H$ constructed in that proof does not
	belong to the domain of $\psi_\#$ as claimed; this is easily
	circumvented for $G = \mathbf R$ but requires some further arguments
	for $G = \mathbf Z / d \mathbf Z$.  To avoid this difficulty and as
	our boundary operator $\boundary_G$ is not yet defined on all of
	$\mathscr R_m^{\textup{loc}} ( U,G )$, the present proof (applicable
	to locally integral chains only) merges the extensions of
	\cite[4.1.31\,(2)]{MR41:1976} and \cite[4.2.3]{MR41:1976} to general
	$G$ in a simultaneous inductive argument by means of the restriction
	operators $r_m$ constructed in \ref{thm:restriction-homomorphism}.
	Finally, in the context of the flat $G$ chains of \cite{MR2876138}, a
	different approach to the special case is chosen in
	\cite[6.3]{MR3206697} on which a constancy theorem for chains in
	\emph{Lipschitz submanifolds} of complete separable metric spaces (see
	\cite[7.6]{MR3206697}) is based.
\end{remark}

\section{Flat chains}

\begin{miniremark} \label{miniremark:locally-flat-G-chains}
	Suppose $n$ is a positive integer, $U$ is an open subset of $\mathbf
	R^n$, and $G$ is a complete normed commutative group.  Whenever $m$ is
	a nonnegative integer, we note
	\begin{equation*}
		H_m = \big ( \mathbf I_m^{\textup{loc}} ( U,G ) \times \mathbf
		I_{m+1}^{\textup{loc}} ( U,G ) \big ) \cap \{ (S,T) \with S +
		\boundary_G T = 0 \}
	\end{equation*}
	is a closed subgroup of $\mathscr R_m^{\textup{loc}} ( U, G ) \times
	\mathscr R_{m+1}^{\textup{loc}} ( U,G )$ by
	\ref{thm:integral-chains}\,\eqref{item:integral-chains:mono}%
	\,\eqref{item:integral-chains:complex} and recall
	\ref{remark:normed-group} to define the complete normed commutative
	group $\mathscr F_m^{\textup{loc}} (U,G)$ of $m$ dimensional
	\emph{locally flat $G$ chains} in $U$ to be the quotient
	\begin{equation*}
		\big ( \mathscr R_m^{\textup{loc}} (U,G) \times \mathscr
		R_{m+1}^{\textup{loc}} (U,G) \big ) \big /H_m.
	\end{equation*}
	Since the composition of canonical continuous homomorphisms
	\begin{equation*}
		\mathscr R_m^{\textup{loc}} (U,G) \to \mathscr
		R_m^{\textup{loc}} ( U, G ) \times \mathscr
		R_{m+1}^{\textup{loc}} ( U, G ) \to \mathscr
		F_m^{\textup{loc}} ( U, G )
	\end{equation*}
	is univalent, \emph{we will henceforth identify $\mathscr
	R_m^{\textup{loc}} (U,G)$ with its image in $\mathscr
	F_m^{\textup{loc}} (U,G)$.}  Whenever $m$ is a positive integer,
	noting that the continuous homomorphisms
	\begin{equation*}
		b_m : \mathscr R_m^{\textup{loc}} ( U, G ) \times \mathscr
		R_{m+1}^{\textup{loc}} ( U,G ) \to \mathscr
		R_{m-1}^{\textup{loc}} ( U, G ) \times \mathscr
		R_m^{\textup{loc}} ( U,G ),
	\end{equation*}
	defined by $b_m (S,T) = (0,S)$ for $(S,T) \in \mathscr
	R_m^{\textup{loc}} (U,G) \times \mathscr R_{m+1}^{\textup{loc}}
	(U,G)$, satisfy the conditions $b_m [H_m] \subset H_{m-1}$ by
	\ref{thm:integral-chains}\,\eqref{item:integral-chains:complex}, $b_m
	\circ b_{m+1} = 0$, and $(\boundary_G S,-S) \in H_{m-1}$ for $S \in
	\mathbf I_m^{\textup{loc}} (U,G)$, they induce continuous quotient
	homomorphisms
	\begin{equation*}
		\boundary_G : \mathscr F_m^{\textup{loc}} ( U,G ) \to \mathscr
		F_{m-1}^{\textup{loc}} ( U, G )
	\end{equation*}
	with $\boundary_G ( \boundary_G S ) = 0$ for $S \in \mathscr
	F_{m+1}^{\textup{loc}} (U,G)$ such that the diagram
	\begin{equation*}
		\begin{xy}
			\xymatrix{
				\mathbf I_m^{\textup{loc}} (U,G) \ar[r]
				\ar[d]^{\boundary_G} & \mathscr
				R_m^{\textup{loc}} (U,G) \ar[r] & \mathscr
				F_m^{\textup{loc}} (U,G) \ar[d]^{\boundary_G}
				\\
				\mathbf I_{m-1}^{\textup{loc}} (U,G) \ar[r] &
				\mathscr R_{m-1}^{\textup{loc}} (U,G) \ar[r] &
				\mathscr F_{m-1}^{\textup{loc}} ( U,G) }
		\end{xy}
	\end{equation*}
	commutes.  Whenever $m$ is a nonnegative integer, we notice that the
	quotient homomorphism of $\mathscr R_m^{\textup{loc}} ( U,G ) \times
	\mathscr R_{m+1}^{\textup{loc}} (U,G)$ onto $\mathscr
	F_m^{\textup{loc}} ( U, G)$ maps
	\begin{equation*}
		\text{$(S,T) \in \mathscr R_m^{\textup{loc}} ( U, G ) \times
		\mathscr R_{m+1}^{\textup{loc}} ( U,G)$ onto $S + \boundary_G
		T \in \mathscr F_m^{\textup{loc}} ( U,G )$},
	\end{equation*}
	in particular $\mathscr F_m^{\textup{loc}} (U,G) = \big \{ S +
	\boundary_G T \with S \in \mathscr R_m^{\textup{loc}} (U,G), T \in
	\mathscr R_{m+1}^{\textup{loc}} (U,G) \big \}$, and we record that
	\begin{equation*}
		\text{$\mathscr P_m ( U,G)$ is dense in $\mathscr
		F_m^{\textup{loc}} (U,G)$};
	\end{equation*}
	in fact, to prove the second assertion, recalling
	\ref{miniremark:isomorphism-for-integers}, \ref{miniremark:bilinear},
	and \ref{corollary:boundary-operator-dense-subset}, we notice that the
	subgroup $\rho_{U,m,G} [ \mathbf I_m ( U, \mathbf Z ) \otimes G ]$ is
	dense in $\mathscr F_m^{\textup{loc}} (U,G)$ and observe that $\mathbf
	R^n$ may be replaced by $U$ in \cite[4.2.21]{MR41:1976}.
\end{miniremark}

\begin{example} \label{example:locally-integral-flat-chains}
	Suppose $m$ is a nonnegative integer, $n$ is a positive integer, and
	$U$ is an open subset of $\mathbf R^n$.  Recalling
	\ref{remark:classical-integral-flat-chains} and
	\ref{miniremark:isomorphism-for-integers}, the commutative groups
	$\mathscr F_m^{\textup{loc}} ( U )$ and $\mathscr F_m^{\textup{loc}} (
	U, \mathbf Z)$ are isomorphic via
	\begin{equation*}
		\mathscr F_m^{\textup{loc}} ( U ) \simeq \big ( \mathscr
		R_m^{\textup{loc}} (U) \times \mathscr R_{m+1}^{\textup{loc}}
		(U ) \big ) \big / \ker \eta \simeq \mathscr
		F_m^{\textup{loc}} ( U, \mathbf Z ),
	\end{equation*}
	where the second isomorphism is induced by $\iota_{U,m} \times
	\iota_{U,m+1}$; in fact, $\iota_{U,m} \times \iota_{U,m+1}$ maps
	\begin{equation*}
		\ker \eta = \big ( \mathbf I_m^{\textup{loc}} (U) \times
		\mathbf I_{m+1}^{\textup{loc}} ( U ) \big ) \cap \{ ( Q,R )
		\with Q + \boundary R = 0 \}
	\end{equation*}
	onto $\big ( \mathbf I_m^{\textup{loc}} ( U, \mathbf Z ) \times
	\mathbf I_{m+1}^{\textup{loc}} ( U, \mathbf Z ) \big) \cap \{ (S,T)
	\with S + \boundary_{\mathbf Z} T = 0 \}$.  The preceding isomorphisms
	$\mathscr F_m^{\textup{loc}} ( U ) \simeq \mathscr F_m^{\textup{loc}}
	( U, \mathbf Z )$ commute with the boundary operators $\boundary$ and
	$\boundary_{\mathbf Z}$.  Finally, topologising the commutative group
	$\mathscr F_m^{\textup{loc}} (U )$ as in \cite[4.3.16]{MR41:1976},
	they also are homeomorphisms because
	\ref{corollary:separation-smooth-Tietze} and
	\ref{thm:locally-integral-flat-chains} allow us to employ slicing to
	verify that basic neighbourhoods of $0$ in $\mathscr
	F_m^{\textup{loc}} ( U )$ are given by the family of sets
	\begin{equation*}
		\eta \big [ (\mathscr R_m^{\textup{loc}} (U) \times \mathscr
		R_{m+1}^{\textup{loc}} (U)) \cap \{ (Q,R) \with ( \| Q \| + \|
		R \| ) (W) < \delta \} \big ]
	\end{equation*}
	corresponding to all pairs $(W,\delta)$ such that $W$ is open, $\Clos
	W$ is a compact subset of $U$, and $\delta > 0$.
\end{example}

\begin{example} \label{example:locally-flat-real-chains}
	Proceeding as in \ref{example:locally-integral-flat-chains} with
	\ref{remark:classical-integral-flat-chains} and
	\ref{miniremark:isomorphism-for-integers} replaced by
	\ref{remark:classical-real-flat-chains} and
	\ref{example:real-coefficients}, we obtain an isomorphism of chain
	complexes with $\mathbf F_m^{\textup{loc}} ( \mathbf R^n ) \simeq
	\mathscr F_m^{\textup{loc}} ( \mathbf R^n, \mathbf R )$.
\end{example}

\medskip \noindent \textsc{Affiliations}

\medskip \noindent Ulrich Menne \smallskip \newline
Department of Mathematics \\
National Taiwan Normal University \\
No.88, Sec.4, Tingzhou Rd. \\
Wenshan Dist., \textsc{Taipei City 116059 \\
	Taiwan(R.\ O.\ C.)}

\medskip \noindent Christian Scharrer \smallskip \newline Max Planck Institute for Mathematics \newline 
Vivatsgasse 7 \newline 
\textsc{53111 Bonn} \\ \textsc{Germany}

\medskip \noindent \textsc{Email addresses}

\medskip \noindent
\href{mailto:Ulrich.Menne@math.ntnu.edu.tw}{Ulrich.Menne@math.ntnu.edu.tw}
\quad
\href{mailto:Scharrer@mpim-bonn.mpg.de}{Scharrer@mpim-bonn.mpg.de}

\end{document}